\documentclass[12pt]{article}
\usepackage[utf8]{inputenc}
\usepackage[english]{babel}
\usepackage[pdftex]{color}
\usepackage[pdftex]{graphicx}
\usepackage{url}
\usepackage{amsmath,amssymb,amsthm}
\usepackage{float}

\usepackage[a4paper,left=15mm,right=15mm,top=30mm,bottom=20mm]{geometry}
\newtheorem{theor_eng}{Theorem}[section]
\newtheorem{proposition}[theor_eng]{Proposition}
\newtheorem{lemma}[theor_eng]{Lemma}
\newtheorem{corollary}[theor_eng]{Corollary}
\newtheorem{remark}[theor_eng]{Remark}
\newtheorem{example}[theor_eng]{Example}

\begin{document}

\title{Affine weighted Motzkin paths and the differential kernel method}

\author{
Alexander Omelchenko\\
\small Constructor University Bremen, Campus Ring 1, 28759 Bremen, Germany\\
\small\tt aomelchenko@constructor.university
}

\date{}
\maketitle

\begin{abstract}
We study Motzkin paths whose up-, level-, and down-step weights are affine
functions of the height, \(\alpha_k=Ak+\alpha_0\), \(\gamma_k=Bk+\gamma_0\),
\(\beta_k=Ck+\beta_0\).  Let \(w_{n,k}\) be the total weight of paths of
length \(n\) ending at height \(k\), and let \(W_n=w_{n,0}\) be the return
column.  For affine weights the row recurrence becomes a first-order
differential equation in the variable marking terminal height, and, unless
\(\beta_0=C\), that equation involves the unknown return series as boundary
data.  We show that the return series is nevertheless determined by the step
rule alone: a characteristic ending on the floor forces a cancellation and
yields an Abel--Volterra equation for the returns, while the same identity
read at an interior point reconstructs the entire triangle \((w_{n,k})\).
We call this boundary cancellation the \emph{differential kernel method}; it
takes the place of substituting an admissible kernel root, which is
unavailable because the equation is differential rather than algebraic in
the catalytic variable.  When the boundary index \(\nu=(\beta_0-C)/C\) is a
positive integer \(m\), the construction becomes finite and combinatorial:
shifting every height by \(m\) turns the divided-difference evolution into
an ordinary weighted path model on a half-line carrying \(m\) virtual levels
below the visible floor, and a first-entry decomposition expresses
\((w_{n,k})\) through two such local models.  Exactly one bridge, of weight
\(\alpha_0-A\), leads from the virtual strip back to the visible region; it
is closed, and the terminal-height columns factorise, precisely when
\(\alpha_0=A\).  For the Dyck weights \(\alpha_k=k+1\),
\(\beta_k=k+\nu+1\), \(\gamma_k=0\) this gives
\(\sum_{n\ge0}w_{n,k}t^n/n!=\sec^{\nu+1}t\,\tan^kt\).

\bigskip\noindent \textbf{Keywords:}
Motzkin paths; weighted lattice paths; return enumeration; kernel method;
catalytic variables; Abel--Volterra equations; height-refined generating
functions; height shifts.

\bigskip\noindent \textbf{Mathematics Subject Classifications:}
05A15 (primary); 33C05, 42C05, 45D05, 60J27 (secondary).
\end{abstract}


\section*{Introduction}

Weighted lattice paths are a basic tool of enumerative combinatorics, and
Motzkin paths provide one of the simplest settings in which a boundary, a
terminal-height refinement, and nonconstant step weights interact
\cite{Stanley2,Flajolet2009,Krattenthaler2015}.  We study Motzkin paths whose
up-, level-, and down-step weights are affine functions of the height.  If
\(w_{n,k}\) denotes the total weight of paths of length \(n\) ending at
height \(k\), then the two principal objects are the full triangle
\((w_{n,k})\) and its return column \(W_n=w_{n,0}\).  The purpose of the
paper is to determine the return column from the affine step rule itself and
to recover the complete terminal-height refinement from it.  When the weights
are positive integers they may be read as multiplicities, or colours, of the
corresponding arrows; Figure~\ref{fig:intro-motzkin} shows the ordinary
Motzkin triangle beside a triangle with height-dependent multiplicities.

\begin{figure}[H]
\centering
\includegraphics[width=.70\textwidth]{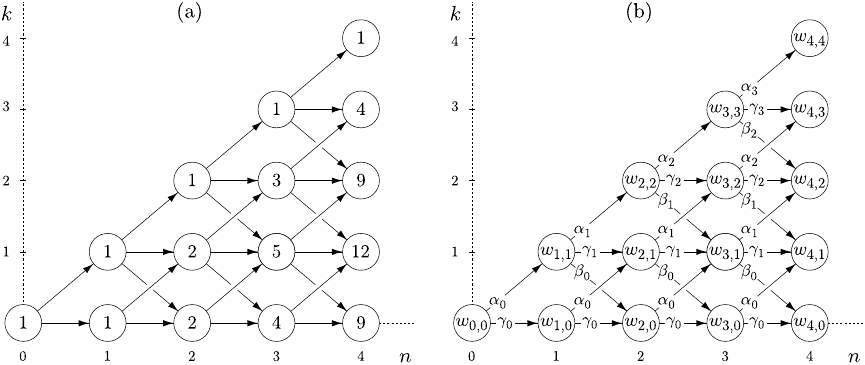}
\caption{The classical Motzkin triangle and a Motzkin triangle with
height-dependent multiplicities.}
\label{fig:intro-motzkin}
\end{figure}

Throughout, \emph{return series} is the combinatorial name for
\(W(t)=\sum_{n\ge0}W_nt^n/n!\); the analytic term \emph{boundary trace} is
used only inside proofs, where \(W\) appears as the unknown boundary value of
\(w(x,t)=\sum_{n,k\ge0}w_{n,k}x^kt^n/n!\) at \(x=0\).

Several classical theories describe parts of this problem.  First-return
decomposition gives Jacobi continued fractions and the moment interpretation
of weighted Motzkin excursions
\cite{Flajolet1980CF,Viennot1985,Chihara1978,Ismail2005}.  The kernel method
and equations with a catalytic variable determine unknown boundary series in
many lattice-path and map-enumeration problems
\cite{BanderierFlajolet2002,Prodinger2004,BousquetMelouJehanne2006}.
Exponential Riordan arrays describe triangles whose height columns
factorise \cite{Barry2011,BarryMwafise2018}.  Affine Dyck and Motzkin
triangles with multiplicities were studied in \cite{Meshkov2010}, which
derived exponential generating functions for several linear weight rules and
already isolated the difficulty caused by an unknown axis series.  That paper
also contains the concrete rule
\[
   \alpha_k=k+1,\qquad \beta_k=k+2,\qquad \gamma_k=k+1,
\]
solved there by adjoining one auxiliary lower row and subtracting its
contribution at the end; in the language used below this is the one-level
no-return construction, and Section~\ref{sec:integer-shift} extends it to an
arbitrary integral height shift while distinguishing exactly between the case
in which a single final deletion is valid and the case in which paths can
leave the visible levels and return.
These viewpoints motivate the present work, but none of them by itself
selects the return series for the general affine rule and simultaneously
reconstructs every terminal height.

For affine weights, the row recurrence becomes a first-order differential
equation in the variable marking terminal height.  In the balanced case
\(\beta_0=C\), this equation is local; that theory is developed separately
in \cite{OmelchenkoBalanced}.  When \(\beta_0\ne C\), a divided-difference
term contains the unknown return series \(W(t)\).  Solving the differential
equation along characteristics therefore leaves a genuine boundary problem:
one must determine \(W(t)\) from the path model, rather than prescribe it as
additional data, and then recover the full bivariate generating function.

Our first result solves this problem for positive boundary index
\(\nu=(\beta_0-C)/C\).  Along a characteristic ending at the floor, an
explicit homogeneous factor vanishes.  Finiteness of the path-generating
function forces the remaining term to cancel and yields a one-dimensional
Abel--Volterra equation for \(W(t)\).  Reading the same characteristic
identity at an interior point gives a reconstruction formula for the entire
height-refined triangle.  We call this boundary-cancellation step the
\emph{differential kernel method}: the equation to be solved is differential
rather than algebraic in the catalytic variable, so no kernel root is
available, and the boundary limit along a characteristic plays the role that
the substitution of an admissible root plays in the classical kernel method.
The characteristic calculation itself is standard. Characteristics and Volterra equations
also occur in linear birth--death models \cite{ZhengChaoJi2004}; the new
point here is the path-refined formulation, in which the same identity both
selects the return column and reconstructs all terminal heights.

The integer values of the boundary index have an additional combinatorial
meaning.  If \(\nu=m>0\), shifting the height scale by \(m\) turns the
divided-difference evolution into a local auxiliary evolution on a half-line
with \(m\) virtual levels.  The shifted copy of the original triangle and
the freely evolved auxiliary triangle are distinct objects.  A first-entry
decomposition relates them through two local shifted path triangles, gives
the return equation at the floor, and after \(m\) differentiations produces
a Volterra equation of the second kind.  The only bridge from the virtual
strip back to the visible region has weight \(\alpha_0-A\).  Its vanishing
is exactly the condition under which the virtual terms may be deleted only
once, after the final row has been formed.

Terminal-height factorisation is even more rigid.  If arbitrary
height-dependent weights satisfy \(H_k=WY^k\) for every height and
\(H_1\not\equiv0\), then the column equations force
\[
   \alpha_k=(k+1)\alpha_0,\qquad
   \gamma_k=\gamma_0+k(\gamma_1-\gamma_0),\qquad
   \beta_k=\beta_0+k(\beta_1-\beta_0).
\]
Thus factorisation itself characterises the arrival-proportional affine
rule.  Inside the affine family this reduces to \(\alpha_0=A\), the same
condition that closes the bridge in the integer shifted model.  The two
criteria therefore arise independently and coincide.

The last part of the paper places these results in two supporting
frameworks.  Edge rescaling shows that the return sequence remembers the
level weights and the products of opposite edge weights, but forgets their
orientation; this leads to the standard Jacobi continued fraction and the
affine Hankel product.  A linear birth--death model with a single defect at
the floor exhibits the same boundary mechanism and identifies the boundary
index with a ratio of transition rates, explaining why the continuous
parameter is fundamental and the finite shifted-floor picture is
exceptional.  Classical Riordan, continued-fraction, and special-function
formulas are used as exact descriptions of these structures, rather than as
the source of the main results.

Section~\ref{sec:affine-motzkin} defines the weighted paths and isolates the
boundary problem.  Section~\ref{sec:continuous-kernel} derives the return
equation, reconstructs the full triangle, and works out an early Dyck
example.  Section~\ref{sec:integer-shift} develops the integer shifted model,
its first-entry decomposition, and the bridge criterion.
Section~\ref{sec:collapse} proves the exact terminal-height factorisation
criterion.  Section~\ref{sec:external-readings} gives the Jacobi and
boundary-perturbed birth--death interpretations, and
Section~\ref{sec:conclusion} summarises the structural conclusions.


\section{Affine weighted Motzkin paths and the boundary problem}
\label{sec:affine-motzkin}

A Motzkin path starts at height \(0\), never goes below height \(0\), and
uses three types of steps:
\[
   k\to k+1,\qquad k\to k,\qquad k+1\to k.
\]
We assign height-dependent weights as follows.  An up-step leaving height
\(k\) has weight \(\alpha_k\), a level-step at height \(k\) has weight
\(\gamma_k\), and a down-step arriving at height \(k\), equivalently a step
from height \(k+1\) to height \(k\), has weight \(\beta_k\).  The weight
of a path is the product of the weights of its steps; the empty path has
weight \(1\).

Let \(w_{n,k}\) be the total weight of all such paths of length \(n\) ending
at height \(k\).  Thus \(w_{0,0}=1\), and there are no paths of length
zero ending at a positive height.  We also set
\[
   w_{n,k}=0\quad(k<0).
\]
Looking at the last step gives
\begin{equation}
\label{eq:motzkin-rec}
   w_{n+1,k}
   =
   \alpha_{k-1}w_{n,k-1}
   +
   \gamma_k w_{n,k}
   +
   \beta_k w_{n,k+1},
   \qquad k\ge0,
\end{equation}
where the first term is omitted when \(k=0\).  If the weights are
nonnegative integers, \(w_{n,k}\) counts coloured Motzkin paths, where the
weight of a step is the number of allowed colours for that step.  For
general nonnegative real weights, \(w_{n,k}\) is the corresponding total
path weight.

The boundary numbers
\[
   W_n:=w_{n,0}
\]
count weighted paths which return to height \(0\); we call \((W_n)_{n\ge0}\)
the \emph{return sequence}.  The word ``return'' means only that the
terminal height is \(0\); no first-return condition is imposed.  Thus the
main enumerative objects are both the return sequence and the full
height-refined triangle \((w_{n,k})_{n,k\ge0}\).

We package the rows into polynomials
\[
   P_n(x)=\sum_{k\ge0}w_{n,k}x^k,
\]
and use the generating function which is polynomial in \(x\) row by row and
exponential in \(t\):
\begin{equation}
\label{eq:motzkin-egf}
   w(x,t)
   =
   \sum_{n\ge0}P_n(x)\frac{t^n}{n!}
   =
   \sum_{n\ge0}\sum_{k\ge0}
   w_{n,k}x^k\frac{t^n}{n!}.
\end{equation}
The coefficient of \(x^k t^n/n!\) in \(w(x,t)\) is exactly the total weight
of paths of length \(n\) ending at height \(k\); for the affine weights
considered below, the reason for the exponential scale in \(t\) will be
visible in the step rule.  For affine weights, the series
\eqref{eq:motzkin-egf} is analytic near \(t=0\); a direct path bound is given
in Appendix~\ref{app:continuous-kernel-details},
Lemma~\ref{lem:app-egf-analytic}.  The return generating function is
\[
   W(t)=w(0,t)=\sum_{n\ge0}W_n\frac{t^n}{n!}
   =
   \sum_{n\ge0}w_{n,0}\frac{t^n}{n!}.
\]

We shall study affine height-dependent weights
\[
   \alpha_k=A k+\alpha_0,
   \qquad
   \beta_k=C k+\beta_0,
   \qquad
   \gamma_k=B k+\gamma_0.
\]
For a genuine nonnegative weighted-path model on all heights, it is enough
to assume
\[
   A,B,C,\alpha_0,\beta_0,\gamma_0\ge0.
\]
If the affine parameters are real but some step weights are negative, the
same definitions give a signed weighted-path model.  The row recurrences and
coefficient identities are, more generally, formal identities over any
coefficient field of characteristic zero.  Unless explicitly stated
otherwise, the characteristic arguments in
Sections~\ref{sec:continuous-kernel} and~\ref{sec:integer-shift} use real
parameters with \(C>0\) and \(\beta_0>C\).  The special-function
identities in Appendix~\ref{app:continuous-kernel-details} are first proved
on an analytic parameter domain and are then interpreted, where stated, by
meromorphic continuation or coefficientwise as formal Laurent-series
identities.

Put
\[
   Q(x)=Ax^2+Bx+C,
   \qquad
   \delta=\beta_0-C,
\]
and let
\[
   \Delta_0 p(x)=\frac{p(x)-p(0)}{x}
\]
be the divided difference at the boundary.  Multiplying
\eqref{eq:motzkin-rec} by \(x^k\), summing over \(k\ge0\), and using the
affine weights gives the row recurrence
\begin{equation}
\label{eq:polynomial-recurrence}
   P_{n+1}(x)
   =
   Q(x)P_n'(x)
   +
   (\alpha_0x+\gamma_0)P_n(x)
   +
   \delta\,\Delta_0P_n(x).
\end{equation}
The affine structure of the weights is exactly what reduces the interior
part of the transition \(P_n\mapsto P_{n+1}\) to a first-order differential
expression in \(x\), the only remaining nonlocal contribution being the
boundary divided difference.  This is why the exponential scale in \(t\) is
the natural one.  Multiplying
\eqref{eq:polynomial-recurrence} by \(t^n/n!\) and summing over \(n\) gives
\begin{equation}
\label{eq:motzkin-pde}
   w_t
   =
   Q(x)w_x
   +
   (\alpha_0x+\gamma_0)w
   +
   \delta\,\frac{w(x,t)-W(t)}{x},
   \qquad
   w(x,0)=1.
\end{equation}
At \(x=0\) the quotient is understood by its removable value \(w_x(0,t)\).
Thus \eqref{eq:motzkin-pde} is a first-order equation in \((x,t)\), amenable
to characteristics, but with a possible boundary coupling.

The meaning of \(\delta\) is simple.  For a down-step leaving height
\(j\ge1\) and arriving at height \(j-1\), the weight is
\[
   \beta_{j-1}=Cj+\delta.
\]
The part \(Cj\), proportional to the departure height, is exactly the
contribution represented by \(CP_n'(x)\) in
\eqref{eq:polynomial-recurrence}.  The constant remainder
\(\delta=\beta_0-C\) is the same at every height, and summing these
remainders over all departure heights \(j\ge1\) gives
\[
   \delta\sum_{j\ge1}w_{n,j}x^{j-1}
   =
   \delta\,\frac{P_n(x)-P_n(0)}{x}:
\]
this is the boundary obstruction in its combinatorial form.

We call the affine row evolution \emph{local} if it is represented by a
fixed first-order differential expression
\[
   P_{n+1}(x)=p_1(x)P_n'(x)+p_0(x)P_n(x),
\]
with polynomial coefficients \(p_0,p_1\), and contains no separate
evaluation of \(P_n\) at \(x=0\).  Equivalently, the exponential
generating-function equation contains no separately evaluated boundary
series \(W(t)=w(0,t)\).

\begin{proposition}[Locality of the affine row rule]
The affine generating-function evolution is local if and only if
\begin{equation}
\label{eq:balanced-condition}
   \beta_0=C,
   \qquad\text{equivalently}\qquad
   \delta=0.
\end{equation}
\end{proposition}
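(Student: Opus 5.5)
The "if" direction is immediate from the row recurrence \eqref{eq:polynomial-recurrence}. When \(\delta=\beta_0-C=0\), the divided-difference term disappears and the recurrence reads
\[
P_{n+1}(x)=Q(x)P_n'(x)+(\alpha_0x+\gamma_0)P_n(x).
\]
This is a fixed first-order differential expression with polynomial coefficients \(p_1=Q\) and \(p_0=\alpha_0x+\gamma_0\). Correspondingly, \eqref{eq:motzkin-pde} contains no term in \(W(t)\), so the evolution is local.

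For the "only if" direction, I will argue by contradiction. Assume \(\delta\neq0\) and that some polynomials \(p_0,p_1\) satisfy \(P_{n+1}=p_1P_n'+p_0P_n\) for every \(n\). The point to exploit is that a local operator \(L=p_1\partial_x+p_0\) is determined by its action on \(1\) and \(x\), while the true transition \(T\) of \eqref{eq:polynomial-recurrence} treats constants specially. The proper reading of locality is as an identity of operators on the row space, so I will compare \(L\) and \(T\) on the monomials \(1\), \(x\) and \(x^2\).

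The computations are as follows. First, \(T(1)=\alpha_0x+\gamma_0\), since \(\Delta_01=0\); this forces \(p_0=\alpha_0x+\gamma_0\). Next, \(T(x)=Q+(\alpha_0x+\gamma_0)x+\delta\); this forces \(p_1=Q+\delta\). Finally, \(T(x^2)=2xQ+(\alpha_0x+\gamma_0)x^2+\delta x\), whereas \(L(x^2)=2x(Q+\delta)+p_0x^2\). The difference is \(L(x^2)-T(x^2)=\delta x\neq0\), which is a contradiction.

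The main obstacle is justifying this operator reading when locality is stated only along the actual sequence \((P_n)\). Combinatorially, the mechanism is that the constant remainder \(\delta\) contributes \(\delta x^{j-1}\) from height \(j\), which is not of the form \(p_1(jx^{j-1})+p_0x^j\) uniformly in \(j\). To make the contradiction rest on concrete rows, I would check the first two or three rows \(P_0=1\), \(P_1\), \(P_2\) directly: equate coefficients of \(P_1=p_1\cdot0+p_0\) and then of \(P_2\) and \(P_3\), and extract the discrepancy \(\delta\) from the coefficient of \(x^0\) or \(x^1\). If the rows have degenerate support for special parameters, for example \(\alpha_0=0\) so that the path never leaves height \(0\), the term \(\delta\) never acts, and I would state the result under the operator interpretation. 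This interpretation matches how the paper defines locality, namely as a fixed expression.
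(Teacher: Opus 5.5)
Your proof is correct, but it argues differently from the paper.

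The paper's proof is a one-line inspection. In the row recurrence and in the generating-function equation, the divided difference is the only term that evaluates separately at \(x=0\), and its coefficient is \(\delta\). Locality is thus read as a property of that particular formula.

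You instead treat the transition \(T=Q\partial_x+(\alpha_0x+\gamma_0)+\delta\Delta_0\) as an operator on polynomials and show that it equals no \(p_1\partial_x+p_0\) when \(\delta\ne0\). The values \(T(1)\) and \(T(x)\) force \(p_0=\alpha_0x+\gamma_0\) and \(p_1=Q+\delta\), and \(x^2\) then exposes the defect \(\delta x\). This gives the ``only if'' direction real content. It rules out that the boundary term could be absorbed into some other local representation, a possibility the paper's shorter argument does not address.

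Your caveat about reading locality only along the actual rows is also accurate. With \(L=(Q+\delta)\partial_x+\alpha_0x+\gamma_0\) one has \((L-T)\sum_k c_kx^k=\delta\sum_{k\ge2}(k-1)c_kx^{k-1}\). Once \(\alpha_0\ne0\) fixes \(p_1\) through \(P_2=\alpha_0p_1+p_0P_1\), the first discrepancy is \(L(P_2)-P_3=\delta\alpha_0\alpha_1x\). It sits in the coefficient of \(x^1\), not \(x^0\). Hence row-level locality with \(\delta\ne0\) holds exactly when \(\alpha_0\alpha_1=0\). For nonnegative weights that is only the degenerate case \(\alpha_0=0\), which the paper sets aside in its remark.

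So the operator reading you adopt, which matches the paper's phrase ``fixed first-order differential expression,'' is the right one.
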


\begin{proof}
The divided-difference term is the only term in
\eqref{eq:polynomial-recurrence} or \eqref{eq:motzkin-pde} that contains a
separate evaluation at the boundary, and its coefficient is
\(\delta=\beta_0-C\).
\end{proof}

We call \eqref{eq:balanced-condition} the \emph{balanced} condition.  The
balanced case is treated in detail in \cite{OmelchenkoBalanced}.  The
present paper studies the boundary problem that remains when the balance is
broken.

When \(C>0\), we normalise the imbalance by writing
\[
   \nu=\frac{\delta}{C}=\frac{\beta_0-C}{C},
   \qquad
   \beta_{j-1}=C(j+\nu),
   \qquad j\ge1.
\]
We call \(\nu\) the \emph{boundary index}.  Thus \(\nu\) is the normalised
excess in every down-step weight over its balanced part \(Cj\).  The value
\(\nu=0\) is precisely the balanced case.  Integer values
\(\nu=m\in\mathbb Z_{>0}\) will later be interpreted as finite shifts of
the height origin.  Noninteger values carry no literal height shift, but
they still define genuine weighted models whenever the resulting step
weights are nonnegative.

\begin{remark}[Elementary boundary degeneracies]
If \(\alpha_0=0\), no nonzero-weight path can leave height \(0\).  If
\(\beta_0=0\), a path that has left height \(0\) cannot return with nonzero
weight.  In either case, only strings of level-steps at height \(0\)
contribute to the return sequence, and
\[
   W(t)=e^{\gamma_0t}.
\]
These elementary cases require no boundary analysis and will not be
discussed further.
\end{remark}

If \(\gamma_k=0\) for every \(k\), level steps are forbidden and the
Motzkin model becomes a weighted Dyck model.  Figure~\ref{fig:dyck-sec1}
shows the general weighted Dyck triangle in panel~(a) and a concrete affine
example with positive boundary index in panel~(b).  The latter example is analysed in
Example~\ref{ex:ck-nu2-boundary-equation}.

\begin{figure}[H]
\centering
\includegraphics[width=.47\textwidth]{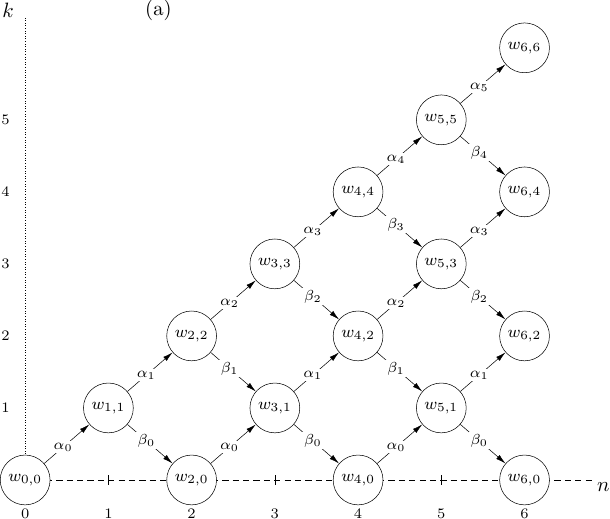}\hfill
\includegraphics[width=.47\textwidth]{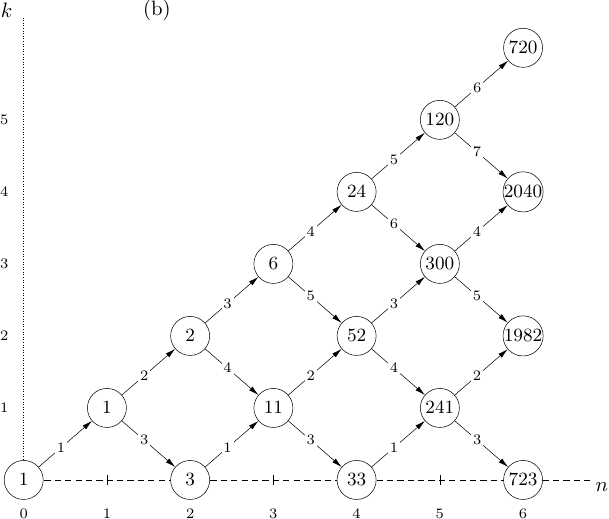}
\caption{Weighted Dyck triangles.  Panel (a): the general Dyck triangle;
each up-step carries the weight \(\alpha_k\) of its departure height, and
each down-step the weight \(\beta_k\) of its arrival height.  Panel (b):
the boundary-index-two example \(\alpha_k=k+1\), \(\beta_k=k+3\), analysed
in Example~\ref{ex:ck-nu2-boundary-equation}.  The vertex labels are the corresponding
total path weights.  Entries with \(n+k\) odd vanish because every step
changes the height by \(\pm1\).}
\label{fig:dyck-sec1}
\end{figure}

Unless explicitly stated otherwise, throughout
Sections~\ref{sec:continuous-kernel}--\ref{sec:integer-shift} we assume
\begin{equation}
\label{eq:positive-index-assumptions}
   C>0,
   \qquad
   \beta_0>C,
   \qquad
   \nu=\frac{\beta_0-C}{C}>0.
\end{equation}
Under these assumptions, \(\nu\) measures the positive boundary excess in
the down-step weights.

Although \(W(t)=w(0,t)\), the characteristic
representation of \eqref{eq:motzkin-pde} contains \(W\) as an a priori
unknown return series.  The boundary problem is to determine this series
from the affine path model itself and then reconstruct the full
height-refined generating function \(w(x,t)\).  The next section solves this boundary problem.


\section{The return equation and height reconstruction}
\label{sec:continuous-kernel}

The recurrence \eqref{eq:motzkin-rec} computes the whole affine Motzkin
triangle row by row, even when only the return numbers
\(W_n=w_{n,0}\) are wanted.  We now derive a closed one-dimensional
equation for their exponential generating function \(W(t)\), and then use
the same calculation to reconstruct the full height-refined generating
function \(w(x,t)\).

Throughout this section the standing assumptions
\eqref{eq:positive-index-assumptions} are in force, so that
\[
   \delta=\beta_0-C=\nu C>0.
\]
We apply the method of characteristics to \eqref{eq:motzkin-pde} in its
elementary form: we differentiate \(w\) along a curve and choose that curve
so that the derivative \(w_x\) disappears.\footnote{For standard
references, see \cite[Section~3.2]{Evans2010} and
\cite{CoddingtonLevinson1955}.}  At this stage \(W(t)\) is left
unspecified; the calculation will produce the equation that it must satisfy.

Fix a sufficiently small \(T>0\), and consider a curve starting at the
boundary point \((0,T)\), written as
\[
   \tau\longmapsto \bigl(X(\tau),T-\tau\bigr),
   \qquad X(0)=0.
\]
Put
\[
   U_T(\tau):=w(X(\tau),T-\tau).
\]
The chain rule gives
\[
   U_T'(\tau)
   =X'(\tau)w_x(X(\tau),T-\tau)
    -w_t(X(\tau),T-\tau).
\]
Substituting \eqref{eq:motzkin-pde} yields
\begin{align*}
   U_T'(\tau)
   &={}
   \bigl(X'(\tau)-Q(X(\tau))\bigr)
      w_x(X(\tau),T-\tau) \\
   &\quad
   -(\alpha_0X(\tau)+\gamma_0)U_T(\tau)
   -\frac{\delta}{X(\tau)}
      \bigl(U_T(\tau)-W(T-\tau)\bigr).
\end{align*}
We choose \(X\) so that the term containing \(w_x\) vanishes:
\begin{equation}
\label{eq:ck-boundary-flow}
   X'(\tau)=Q(X(\tau)),
   \qquad
   X(0)=0.
\end{equation}
Thus \(\tau\mapsto(X(\tau),T-\tau)\) is the characteristic through
\((0,T)\), followed backwards from the boundary.  Along this characteristic,
\(U_T\) satisfies the linear ordinary differential equation
\begin{equation}
\label{eq:ck-characteristic-ode}
   U_T'(\tau)
   +
   \left(
      \alpha_0X(\tau)+\gamma_0+
      \frac{\delta}{X(\tau)}
   \right)U_T(\tau)
   =
   \frac{\delta}{X(\tau)}W(T-\tau).
\end{equation}

For a linear equation \(U'+aU=b\), an integrating factor \(F\) is determined
by \(F'/F=a\).  Here
\[
   a(\tau)
   =\alpha_0X(\tau)+\gamma_0+
      \frac{\delta}{X(\tau)}.
\]
The affine equation \eqref{eq:ck-boundary-flow} gives
\[
   \frac{X'}{X}=AX+B+\frac{C}{X}.
\]
Since \(\delta=\nu C\),
\[
   \nu\frac{X'}{X}
   =\nu AX+\nu B+\frac{\delta}{X},
\]
and therefore
\begin{equation}
\label{eq:ck-coeff-split}
   \alpha_0X+\gamma_0+\frac{\delta}{X}
   =
   \nu\frac{X'}{X}
   +(\alpha_0-\nu A)X
   +(\gamma_0-\nu B).
\end{equation}
Since the first term in \eqref{eq:ck-coeff-split} integrates to
\(\log X^\nu\), the corresponding integrating factor is
\begin{equation}
\label{eq:ck-F-def}
   F_\nu(\tau)
   :=
   X(\tau)^\nu
   \exp\left(
      \int_0^\tau
      \bigl[(\alpha_0-\nu A)X(r)+\gamma_0-\nu B\bigr]dr
   \right).
\end{equation}
For sufficiently small \(\tau>0\), one has \(X(\tau)>0\), and the power
\(X(\tau)^\nu\) is the positive real power.  By construction,
\[
   \frac{F_\nu'(\tau)}{F_\nu(\tau)}
   =
   \alpha_0X(\tau)+\gamma_0+
   \frac{\delta}{X(\tau)}.
\]
Multiplying \eqref{eq:ck-characteristic-ode} by \(F_\nu(\tau)\) now gives
\[
   \frac{d}{d\tau}
   \bigl[F_\nu(\tau)U_T(\tau)\bigr]
   =
   \frac{\delta F_\nu(\tau)}{X(\tau)}W(T-\tau).
\]
For brevity, set
\[
   L_\nu(\tau)
   :=
   \frac{\delta F_\nu(\tau)}{X(\tau)}.
\]

Since \(X(0)=0\) and \(X'(0)=Q(0)=C\),
\[
   X(\tau)=C\tau+O(\tau^2).
\]
It follows from \eqref{eq:ck-F-def} that
\begin{equation}
\label{eq:ck-FL-leading}
   F_\nu(\tau)
   =C^\nu\tau^\nu\bigl(1+O(\tau)\bigr),
   \qquad
   L_\nu(\tau)
   =\nu C^\nu\tau^{\nu-1}\bigl(1+O(\tau)\bigr).
\end{equation}
Thus \(F_\nu(\tau)\to0\) as \(\tau\downarrow0\), and \(L_\nu\) is
integrable at the origin because \(\nu>0\).

\begin{theor_eng}[The return equation]
\label{thm:boundary-selection-combinatorial}
The exponential generating function for weighted returns is the unique
function \(W\), analytic near the origin and satisfying \(W(0)=1\), for
which
\begin{equation}
\label{eq:ck-volterra-final}
   F_\nu(T)
   =
   \int_0^T L_\nu(v)W(T-v)\,dv
\end{equation}
holds for all sufficiently small \(T>0\).  Thus the affine step weights
determine the return sequence without first computing the entries at
positive terminal heights.
\end{theor_eng}

\begin{proof}
For \(0<\varepsilon<\tau\), integration of the characteristic equation
gives
\[
   F_\nu(\tau)U_T(\tau)
   -F_\nu(\varepsilon)U_T(\varepsilon)
   =
   \int_\varepsilon^\tau
      L_\nu(v)W(T-v)\,dv.
\]
As \(\varepsilon\downarrow0\),
\[
   U_T(\varepsilon)
   =w(X(\varepsilon),T-\varepsilon)
   \longrightarrow w(0,T)=W(T),
\]
whereas \eqref{eq:ck-FL-leading} gives
\(F_\nu(\varepsilon)U_T(\varepsilon)\to0\).  Hence
\begin{equation}
\label{eq:ck-boundary-flow-identity}
   F_\nu(\tau)w(X(\tau),T-\tau)
   =
   \int_0^\tau L_\nu(v)W(T-v)\,dv.
\end{equation}
Setting \(\tau=T\), the time coordinate becomes zero and
\(w(X(T),0)=1\), which gives \eqref{eq:ck-volterra-final}.  The path
generating function supplies existence.  Uniqueness among analytic
solutions follows from Lemma~\ref{lem:app-abel-regularisation}; the required
analyticity estimate is Lemma~\ref{lem:app-egf-analytic}.
\end{proof}

The characteristic identity \eqref{eq:ck-boundary-flow-identity} will also
give the reconstruction formula below.

Corollary~\ref{cor:ck-beta-recursion} gives a constructive form of this
uniqueness: it determines \(W_0,W_1,\ldots\) successively.  Thus
\eqref{eq:ck-volterra-final} is not an additional boundary condition; it is
the one-dimensional equation imposed on the return series by the affine path
model.

To recover an interior value, let \(\rho\) be the local inverse of \(X\).
Separating variables in \eqref{eq:ck-boundary-flow} gives
\[
   \rho(x)=X^{-1}(x)=\int_0^x\frac{dy}{Q(y)}.
\]

\begin{theor_eng}[Height-refined reconstruction]
For sufficiently small \(x>0\) and \(t\) close to the origin, the full
height-refined generating function is recovered from the return series by
\begin{equation}
\label{eq:ck-reconstruction-final}
   w(x,t)
   =
   \frac{1}{F_\nu(\rho(x))}
   \int_0^{\rho(x)}
      L_\nu(v)W(t+\rho(x)-v)\,dv.
\end{equation}
The right-hand side extends continuously to \(x=0\), where it equals
\(W(t)\).  Consequently, the return sequence together with the affine step
weights determines every coefficient \(w_{n,k}\).
\end{theor_eng}

\begin{proof}
Let \(r=\rho(x)\), so that \(X(r)=x\), and put \(T=t+r\).  Taking
\(\tau=r\) in \eqref{eq:ck-boundary-flow-identity} gives
\[
   F_\nu(r)w(x,t)
   =
   \int_0^r L_\nu(v)W(t+r-v)\,dv,
\]
which is \eqref{eq:ck-reconstruction-final} for \(x\ne0\).

It remains to verify the boundary value directly.  As \(r\downarrow0\),
uniformly for \(0\le v\le r\),
\[
   L_\nu(v)=\nu C^\nu v^{\nu-1}\bigl(1+O(r)\bigr),
   \qquad
   W(t+r-v)=W(t)+O(r).
\]
Therefore
\[
   \int_0^r L_\nu(v)W(t+r-v)\,dv
   =C^\nu r^\nu W(t)+O(r^{\nu+1}).
\]
Since \(F_\nu(r)=C^\nu r^\nu(1+O(r))\), the quotient in
\eqref{eq:ck-reconstruction-final} equals \(W(t)+O(r)\).  It thus extends
continuously to \(x=0\), with value \(W(t)\).
\end{proof}

The method of characteristics and the integrating-factor calculation above
are classical.  The terminology in the title refers to the cancellation at
the boundary that determines \(W\).  In the classical linear kernel method,
an equation with one catalytic variable has the form
\[
   K(t,u)\,\mathcal F(t,u)
   =R(F_1,\ldots,F_k,t,u),
\]
and substitution of an admissible root \(u=U(t)\) of \(K\) eliminates the
unknown bivariate series and leaves an equation for the boundary series; see
\cite[Section~3.1]{BousquetMelouJehanne2006}.  Here there is no algebraic
kernel root.  Instead, \(F_\nu(\tau)\to0\) at the boundary end
\(\tau=0\), and the term containing the unknown value \(W(T)\) disappears
from \eqref{eq:ck-boundary-flow-identity}.  Evaluating the same identity on
the initial line \(T-\tau=0\) then gives the return equation
\eqref{eq:ck-volterra-final}.  We call this boundary-cancellation step the
\emph{differential kernel method}.

\begin{corollary}[A triangular recursion for the return weights]
\label{cor:ck-beta-recursion}
Write
\[
   F_\nu(t)=t^\nu\sum_{j\ge0}h_jt^j,
   \qquad
   L_\nu(t)=t^{\nu-1}\sum_{j\ge0}\ell_jt^j,
\]
where \(h_0=C^\nu\) and \(\ell_0=\nu C^\nu\).  Then \(W_0=1\), and for
\(N\ge1\),
\begin{equation}
\label{eq:ck-beta-recursion}
   W_N
   =
   \frac{N!}{\ell_0\,\mathrm B(\nu,N+1)}
   \left[
      h_N-
      \sum_{j=1}^{N}
      \ell_j\frac{W_{N-j}}{(N-j)!}
      \mathrm B(\nu+j,N-j+1)
   \right].
\end{equation}
Here \(\mathrm B(\cdot,\cdot)\) denotes the beta function, upright \(\mathrm B\)
being reserved for it throughout, in contrast with the affine level-weight
parameter \(B\).  Since \(\nu>0\), all beta factors in the formula are finite
and positive.
\end{corollary}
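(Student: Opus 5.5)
Substitute the expansions into the return equation of Theorem~\ref{thm:boundary-selection-combinatorial} and compare coefficients of \(T^{\nu+N}\). The expansions of \(F_\nu\) and \(L_\nu\) exist by \eqref{eq:ck-F-def}: \(X\) is analytic with \(X(\tau)=C\tau(1+O(\tau))\), so \(X(\tau)^\nu=C^\nu\tau^\nu\,(X(\tau)/(C\tau))^\nu\) is \(\tau^\nu\) times a convergent power series. The exponential factor in \eqref{eq:ck-F-def} is analytic. The same then holds for \(L_\nu=\delta F_\nu/X\), which is \(\tau^{\nu-1}\) times an analytic series. The leading coefficients \(h_0=C^\nu\) and \(\ell_0=\nu C^\nu\) are read off from \eqref{eq:ck-FL-leading}.

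The core computation is the convolution
\[
   \int_0^T v^{\nu-1+j}\,(T-v)^{N-j}\,dv
   =T^{\nu+N}\,\mathrm B(\nu+j,N-j+1),
\]
which is valid for \(\nu>0\) and \(j\le N\) (Euler's beta integral after the substitution \(v=Ts\)). Writing \(W(T-v)=\sum_i W_i (T-v)^i/i!\), the right-hand side of \eqref{eq:ck-volterra-final} becomes
\[
   \sum_{N\ge0}T^{\nu+N}\sum_{j=0}^{N}\ell_j\frac{W_{N-j}}{(N-j)!}\,\mathrm B(\nu+j,N-j+1),
\]
and the left-hand side is \(\sum_N h_N T^{\nu+N}\). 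Termwise integration of the double series is justified on a small interval \([0,T]\). The series for \(W\) and for \(L_\nu v^{1-\nu}\) converge absolutely there, and \(v^{\nu-1}\) is integrable, so dominated convergence applies.

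Both sides are \(T^\nu\) times a convergent power series in \(T\) for small \(T>0\). The quotient by \(T^\nu\) is therefore an identity of analytic functions on an interval, and the coefficients must agree. For \(N=0\) this gives \(h_0=\ell_0W_0\,\mathrm B(\nu,1)=\ell_0W_0/\nu\). Since \(\ell_0/\nu=C^\nu=h_0\), this yields \(W_0=1\), consistent with \(W(0)=1\). For \(N\ge1\), isolate the \(j=0\) term \(\ell_0W_N\,\mathrm B(\nu,N+1)/N!\) and solve for \(W_N\); this is exactly \eqref{eq:ck-beta-recursion}. Finiteness and positivity of the beta factors follow because all their arguments are positive when \(\nu>0\). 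The only step that needs any care is the justification of termwise integration and coefficient comparison, and that is routine given the analyticity of \(W\) from Lemma~\ref{lem:app-egf-analytic}.
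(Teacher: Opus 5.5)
Your proposal is correct and follows the paper's proof: substitute the expansions of \(F_\nu\), \(L_\nu\) and \(W\) into the return equation, evaluate the convolution by Euler's beta integral, compare coefficients of \(T^{\nu+N}\), and solve for the \(j=0\) term, whose coefficient \(\ell_0\,\mathrm B(\nu,N+1)/N!\) is nonzero because \(\ell_0=\nu C^\nu>0\). The extra details you supply are correct and are left implicit in the paper: the analyticity of \(T^{-\nu}F_\nu\) and \(T^{1-\nu}L_\nu\), the dominated-convergence justification of termwise integration, and the consistency check \(h_0=\ell_0W_0/\nu\), which gives \(W_0=1\).
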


\begin{proof}
Insert the displayed expansions and
\(W(t)=\sum_{n\ge0}W_nt^n/n!\) into
\eqref{eq:ck-volterra-final}.  The coefficient of \(T^{\nu+N}\) is
\[
   h_N
   =
   \sum_{j=0}^{N}
      \ell_j\frac{W_{N-j}}{(N-j)!}
      \mathrm B(\nu+j,N-j+1).
\]
The term \(j=0\) contributes
\(\ell_0W_N\,\mathrm B(\nu,N+1)/N!\).  Since
\(\ell_0=\nu C^\nu>0\) and \(\mathrm B(\nu,N+1)>0\), this coefficient is nonzero,
and solving for \(W_N\) gives \eqref{eq:ck-beta-recursion}.
\end{proof}

\begin{example}[The boundary-index-two Dyck triangle]
\label{ex:ck-nu2-boundary-equation}
Consider the weights
\[
   \alpha_k=k+1,
   \qquad
   \beta_k=k+3,
   \qquad
   \gamma_k=0,
\]
shown in Figure~\ref{fig:dyck-sec1}(b).  Here
\[
   Q(x)=1+x^2,
   \qquad
   \nu=2,
   \qquad
   X(t)=\tan t,
   \qquad
   \rho(x)=\arctan x.
\]
Formula \eqref{eq:ck-F-def} gives
\[
   F_2(t)=\frac{\sin^2t}{\cos t},
   \qquad
   L_2(t)=2\sin t.
\]
The return equation becomes
\begin{equation}
\label{eq:ck-nu2-return-equation}
   \frac{\sin^2T}{\cos T}
   =2\int_0^T\sin v\,W(T-v)\,dv.
\end{equation}
Let
\[
   K(T):=\int_0^T\sin v\,W(T-v)\,dv.
\]
After the change of variable \(u=T-v\), differentiation gives
\(K''+K=W\).  Since \eqref{eq:ck-nu2-return-equation} says
\(2K=\sin^2T/\cos T\), direct differentiation yields
\[
   W(T)=\sec^3T.
\]

The reconstruction formula, with \(r=\rho(x)=\arctan x\), now gives
\[
   w(x,t)
   =
   \frac{2}{F_2(r)}
   \int_0^r\sin v\,\sec^3(t+r-v)\,dv.
\]
With the substitution \(s=t+r-v\),
\begin{align*}
   2\int_0^r\sin v\,\sec^3(t+r-v)\,dv
   &=2\int_t^{t+r}\sin(t+r-s)\,\sec^3s\,ds \\
   &=\left[
      2\sin(t+r)\tan s
      -\cos(t+r)\tan^2s
   \right]_{s=t}^{t+r}.
\end{align*}
Using \(\tan r=x\) and the tangent addition formula, this simplifies to
\[
   2\int_0^r\sin v\,\sec^3(t+r-v)\,dv
   =
   \frac{\sin^2r}{\cos r}
   \frac{\sec^3t}{1-\tan r\tan t},
\]
so that
\[
   w(x,t)=\frac{\sec^3t}{1-x\tan t}.
\]
Thus
\[
   W_n=n!\,[t^n]\sec^3t,
   \qquad
   w_{n,k}=n!\,[t^n]\sec^3t\,\tan^k t,
\]
and the first nonzero return weights are
\[
   1,\ 3,\ 33,\ 723,\ 25953,\ldots
\]
\end{example}

The gamma-normalised formal ordinary-series quotient, its generic hypergeometric
evaluation, and computational remarks are collected in
Appendix~\ref{app:continuous-kernel-details}.  We next turn to the direct
path interpretation of the return equation when \(\nu\) is a positive
integer.


\section{Integer boundary index and the shifted floor}
\label{sec:integer-shift}

Assume that the boundary index is a positive integer,
\begin{equation}
\label{eq:integer-shift-assumption}
   \nu=m\in\mathbb Z_{>0},
   \qquad
   \beta_0=(m+1)C.
\end{equation}
Then
\[
   \beta_k=C(k+m+1),
\]
and the occurrence of \(k+m\) suggests translating the height scale.  Put
\[
   j=k+m.
\]
If the new height is still restricted to \(j\ge m\), this is only a
relabelling of the original paths.  The row polynomials and the bivariate
generating function become
\[
   v_n(x):=x^mP_n(x),
   \qquad
   v(x,t):=x^mw(x,t).
\]
Thus \(v\) describes the original Motzkin triangle shifted upward by
\(m\) levels: its paths start at height \(m\) and never go below that
height.

We now extend the shifted model to the whole half-line \(j\ge0\).  The
levels \(j\ge m\) form the \emph{visible region}, and
\[
   0\le j<m
\]
form the \emph{virtual strip}.  At a visible height \(j=k+m\), the original
weights become
\[
\begin{aligned}
   \widetilde\alpha_j
   &=\alpha_{j-m}=Aj+\alpha_0-mA,\\
   \widetilde\beta_j
   &=\beta_{j-m}=C(j+1),\\
   \widetilde\gamma_j
   &=\gamma_{j-m}=Bj+\gamma_0-mB.
\end{aligned}
\]
We use the affine expressions on the right-hand side to define the
auxiliary weights for every \(j\ge0\).  In particular, the down-step rule
has the form \(\widetilde\beta_j=C(j+1)\), so the auxiliary row evolution
is local and contains no boundary divided difference.  The affine
expressions, extended into the virtual strip, may take zero or negative
values there; the auxiliary model is therefore understood as a formal
weighted-path model whenever necessary.  At visible levels \(j\ge m\), the auxiliary step
weights agree with the translated original weights.  Hence auxiliary paths
which remain in \(j\ge m\) are precisely the shifted original paths.  The
unrestricted auxiliary totals need not agree with the original visible
coefficients, since a path may enter the virtual strip and later return; the
first-entry correction below measures exactly this discrepancy.

Let \(a_{n,j}\) be the total weight of auxiliary shifted paths of length
\(n\) which start at height \(m\), remain in \(j\ge0\), and end at height
\(j\).  Put
\[
   a_n(x):=\sum_{j\ge0}a_{n,j}x^j,
   \qquad
   a(x,t):=\sum_{n\ge0}a_n(x)\frac{t^n}{n!}.
\]
Set
\[
   \mathcal P_m f
   :=Q(x)f'(x)
     +\bigl((\alpha_0-mA)x+\gamma_0-mB\bigr)f(x).
\]
Then
\[
   a_t=\mathcal P_m a,
   \qquad
   a(x,0)=x^m.
\]
Thus \(a(x,t)\) describes the freely evolved shifted triangle, in which
paths may enter and leave the virtual strip.

The advantage of the integer shift is also visible directly in the equation
for \(v=x^mw\).  Since
\[
   x^mw_x=v_x-\frac{m}{x}v
\]
and \(\delta=mC\), multiplication of \eqref{eq:motzkin-pde} by \(x^m\)
gives
\begin{align*}
   v_t
   &={}Q(x)\left(v_x-\frac{m}{x}v\right)
      +(\alpha_0x+\gamma_0)v
      +mC\left(\frac{v}{x}-x^{m-1}W(t)\right)\\
   &={}\mathcal P_m v-mCx^{m-1}W(t).
\end{align*}
Hence
\begin{equation}
\label{eq:integer-shifted-v-pde}
   v_t=\mathcal P_m v-mCx^{m-1}W(t),
   \qquad
   v(x,0)=x^m.
\end{equation}
The divided difference has disappeared.  Its effect is now the regular
source supported at the top virtual level \(j=m-1\): it removes paths which
step from the visible floor \(m\) into the virtual strip.

To describe this correction globally, let \(b_{n,j}\) be the total weight
of the same auxiliary shifted paths, now starting at height \(m-1\), and
put
\[
   b(x,t):=\sum_{n,j\ge0}b_{n,j}x^j\frac{t^n}{n!}.
\]
It satisfies the same local equation,
\[
   b_t=\mathcal P_m b,
   \qquad
   b(x,0)=x^{m-1}.
\]

\begin{proposition}[First-entry reconstruction in the shifted model]
\label{prop:integer-first-entry}
For \(n,j\ge0\), the auxiliary coefficients satisfy
\[
   a_{n,j}
   =\mathbf 1_{\{j\ge m\}}w_{n,j-m}
    +mC\!\sum_{\substack{r,q\ge0\\r+q=n-1}}
       W_r b_{q,j},
\]
with the sum empty when \(n=0\).  Equivalently, the free auxiliary triangle
decomposes as
\begin{equation}
\label{eq:integer-first-entry-reconstruction}
   a(x,t)
   =x^mw(x,t)
    +mC\int_0^t b(x,s)W(t-s)\,ds.
\end{equation}
Thus
\begin{equation}
\label{eq:integer-duhamel-reconstruction}
   x^mw(x,t)
   =a(x,t)
    -mC\int_0^t b(x,s)W(t-s)\,ds.
\end{equation}
\end{proposition}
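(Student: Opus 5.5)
We prove the coefficient identity by a first-entry decomposition of auxiliary paths, and then read off the generating-function forms \eqref{eq:integer-first-entry-reconstruction} and \eqref{eq:integer-duhamel-reconstruction} by multiplying by \(x^jt^n/n!\) and summing.

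Fix \(n,j\ge0\) and consider the auxiliary paths of length \(n\) that start at height \(m\) and end at height \(j\). We split them according to whether they ever visit the virtual strip \(\{0,\dots,m-1\}\).

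Paths that never enter the strip stay in \(j\ge m\). There the auxiliary weights coincide with the translated original weights \(\alpha_{j-m},\beta_{j-m},\gamma_{j-m}\). So the map \(j\mapsto j-m\) is a weight-preserving bijection onto original Motzkin paths ending at \(j-m\). These paths therefore contribute \(\mathbf 1_{\{j\ge m\}}w_{n,j-m}\).

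Every other path has a first step into the strip. Since steps change height by at most one and the path starts at \(m\), this first entry is necessarily a down-step \(m\to m-1\), taken at some time \(r+1\le n\).

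\begin{itemize}
\item The prefix of length \(r\) runs from \(m\) to \(m\) inside \(j\ge m\). By the bijection above, its total weight is \(w_{r,0}=W_r\).
\item The entering step arrives at height \(m-1\), so its weight is \(\widetilde\beta_{m-1}=Cm\). For \(m\ge1\) this lies in the range where the affine expression \(C(j+1)\) is used. It also agrees with the original bookkeeping, since \(\beta_{-1}\) formally equals \(C\cdot 0+\beta_0-C=mC\).
\item The suffix of length \(q=n-1-r\) is an arbitrary auxiliary path from \(m-1\) to \(j\) that stays in \(j\ge0\). Its total weight is \(b_{q,j}\).
\end{itemize}

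Path weights are products of step weights, so they factor over this concatenation. Summing over \(r\) gives exactly the convolution term \(mC\sum_{r+q=n-1}W_rb_{q,j}\). This sum is empty when \(n=0\), because then the path has no steps.

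It remains to pass to generating functions. Multiply the identity by \(x^jt^n/n!\) and sum over \(n,j\).
\begin{itemize}
\item The first term gives \(x^m w(x,t)\).
\item For the convolution, use the standard fact that \(\sum_{n}\sum_{r+q=n-1}W_rb_q\,t^n/n!\) is the coefficient form of \(\int_0^t b(s)W(t-s)\,ds\). This follows from \(\int_0^t s^q(t-s)^r\,ds=q!\,r!\,t^{q+r+1}/(q+r+1)!\).
\end{itemize}
Together these give \eqref{eq:integer-first-entry-reconstruction}, and rearranging gives \eqref{eq:integer-duhamel-reconstruction}.

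As an independent check, one can verify the result analytically instead. Let \(\tilde v\) denote the right-hand side of \eqref{eq:integer-duhamel-reconstruction}. Differentiating under the integral and using \(b(x,0)=x^{m-1}\) together with \(b_t=\mathcal P_m b\) shows that \(\tilde v\) satisfies \eqref{eq:integer-shifted-v-pde} with initial value \(x^m\). Uniqueness for this local linear system follows by comparing coefficients row by row, so \(\tilde v=x^mw\).

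The main point needing care is the bookkeeping of the first-entry step. One must confirm that entry into the strip can occur only through the edge \(m\to m-1\), and that its weight is \(mC\) and not some visible-region weight. One must also check that no sign or positivity issue arises, since the auxiliary model may carry zero or negative weights in the strip. This causes no difficulty: all manipulations are finite sums of products, valid formally over any field, so the combinatorial argument goes through verbatim.
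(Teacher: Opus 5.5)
Your proof is correct and follows the paper's argument essentially verbatim. The paper splits auxiliary paths by whether they ever enter the virtual strip, marks the first down-step \(m\to m-1\) of weight \(\widetilde\beta_{m-1}=mC\) between a visible return prefix (counted by \(W_r\)) and a free auxiliary suffix from \(m-1\) (counted by \(b_{q,j}\)), and passes to exponential generating functions via the beta integral. Your Duhamel-type check likewise matches the paper's remark in Appendix~\ref{app:integer-compression}, with uniqueness coming from the row recursion \(v_{n+1}=\mathcal P_m v_n-mCW_nx^{m-1}\).
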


\begin{proof}
Every auxiliary path counted by \(a(x,t)\) belongs to exactly one of two
classes.  If it never enters the virtual strip, shifting all heights down by
\(m\) gives an original Motzkin path; these paths are counted by
\(x^mw(x,t)\).

Otherwise, mark the first step from \(m\) to \(m-1\).  Before the marked
step, the path starts and ends at height \(m\) and remains in the visible
region; after shifting down by \(m\), this initial segment is an original
return path counted by \(W\).  The marked step has weight
\[
   \widetilde\beta_{m-1}=mC,
\]
and the remaining segment is an arbitrary auxiliary path starting at
height \(m-1\), counted by \(b\).  The decomposition is unique and gives the displayed coefficient identity.
Passing to exponential generating functions uses
\[
   \int_0^t\frac{s^q}{q!}\frac{(t-s)^r}{r!}\,ds
   =\frac{t^{q+r+1}}{(q+r+1)!},
\]
which yields \eqref{eq:integer-first-entry-reconstruction}.
\end{proof}

This is the integer-index form of the reconstruction theorem of
Section~\ref{sec:continuous-kernel}.  Its additional content is that the two
functions \(a\) and \(b\) are themselves generating functions of completely
local weighted-path triangles.  Analytically,
\eqref{eq:integer-duhamel-reconstruction} is Duhamel's formula for the
inhomogeneous equation \eqref{eq:integer-shifted-v-pde}; the direct
verification is recorded in Appendix~\ref{app:integer-compression}.

For a polynomial or formal power series in \(x\), write
\[
   [f(x)]_{\ge m}
   :=f(x)-\sum_{j=0}^{m-1}[x^j]f(x)\,x^j.
\]
The coefficient-level form of \eqref{eq:integer-shifted-v-pde} is the
following exact deletion rule.

\begin{corollary}[Row-by-row shifted-floor rule]
For every \(n\ge0\),
\begin{equation}
\label{eq:integer-row-by-row-rule}
   x^mP_{n+1}(x)
   =\left[\mathcal P_m\bigl(x^mP_n(x)\bigr)\right]_{\ge m}.
\end{equation}
Both sides are written on the shifted scale.  The truncation deletes the
endpoints in the virtual strip; division by \(x^m\) is the separate final
operation that returns to the original height scale.
\end{corollary}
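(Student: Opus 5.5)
The plan is to read off the coefficient of \(t^n/n!\) in the shifted equation \eqref{eq:integer-shifted-v-pde}, and then to identify the source term with exactly the virtual-strip part of \(\mathcal P_m v_n\).

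\emph{Coefficient identity.} Write \(v_n(x)=x^mP_n(x)\). Since \(v=\sum_n v_n t^n/n!\) and \(W(t)=\sum_n W_n t^n/n!\), comparing coefficients of \(t^n/n!\) in \eqref{eq:integer-shifted-v-pde} gives
\[
   x^mP_{n+1}(x)=\mathcal P_m\bigl(x^mP_n(x)\bigr)-mC\,W_n\,x^{m-1}.
\]
For a self-contained argument one can instead multiply the row recurrence \eqref{eq:polynomial-recurrence} by \(x^m\). This uses \(x^mQP_n'=Q\bigl((x^mP_n)'-mx^{m-1}P_n\bigr)\) and \(\delta=mC\), and is the same computation as the derivation of \eqref{eq:integer-shifted-v-pde}, now carried out at the polynomial level with \(P_n(0)=W_n\).

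\emph{Support of \(\mathcal P_m v_n\).} The left-hand side is supported on degrees \(\ge m\). On the right, \(v_n=x^mP_n\) has lowest possible degree \(m\). The operator \(\mathcal P_m\) acts as follows:
\begin{itemize}
\item \(Q(x)f'=(Ax^2+Bx+C)f'\) lowers degree by at most one, and only through the term \(Cf'\);
\item the multiplication term does not lower degree at all.
\end{itemize}
Hence the only monomial of \(\mathcal P_m v_n\) below degree \(m\) is the degree-\(m-1\) coefficient \(C\cdot m\cdot[x^m]v_n=mCW_n\). Consequently
\[
   \sum_{j<m}[x^j]\,\mathcal P_m v_n\,x^j=mCW_nx^{m-1},
\]
and subtracting this is precisely applying \([\,\cdot\,]_{\ge m}\). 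This yields \eqref{eq:integer-row-by-row-rule}. Division by \(x^m\) is legitimate at the end because both sides lie in \(x^m\) times polynomials.

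\emph{Combinatorial check and main obstacle.} Alternatively, one can argue combinatorially. One step of the auxiliary evolution from endpoints at visible heights \(j\ge m\) can exit the visible region only by the down-step \(m\to m-1\), with weight \(\widetilde\beta_{m-1}=mC\). Truncation therefore deletes exactly those paths and leaves the one-step extensions of shifted original paths. The only real care point is the degree bookkeeping above: confirming that no term of \(\mathcal P_m\) reaches degree \(\le m-2\), and that the result holds for all \(n\) including \(n=0\) (\(P_0=1\)). This is routine.
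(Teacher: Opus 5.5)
Your proposal is correct and follows essentially the same route as the paper. Both extract the coefficient of \(t^n/n!\) from \eqref{eq:integer-shifted-v-pde} to get \(x^mP_{n+1}=\mathcal P_m(x^mP_n)-mCW_nx^{m-1}\), and then observe, as in Appendix~\ref{app:integer-compression}, that \(mCW_nx^{m-1}\) is the only part of \(\mathcal P_m(x^mP_n)\) below degree \(m\). Strictly, the degree bookkeeping is not needed: applying the linear truncation \([\cdot]_{\ge m}\) to both sides of that coefficient identity already gives the result, since it fixes \(x^mP_{n+1}\) and annihilates \(x^{m-1}\).
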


\begin{proof}
Taking the coefficient of \(t^n/n!\) in
\eqref{eq:integer-shifted-v-pde} gives
\[
   x^mP_{n+1}(x)
   =\mathcal P_m\bigl(x^mP_n(x)\bigr)-mCW_nx^{m-1}.
\]
The last term is the only term of degree below \(m\), and truncation removes
it.
\end{proof}

Combinatorially, the deleted monomial
\[
   mC\,W_nx^{m-1}=mC\,P_n(0)x^{m-1}
\]
counts paths which are at the original floor after \(n\) steps, hence at
shifted height \(m\), and then take the down-step to the top level
\(m-1\) of the virtual strip.

Setting \(x=0\) in \eqref{eq:integer-first-entry-reconstruction} gives the
one-dimensional return equation
\begin{equation}
\label{eq:integer-first-entry-convolution}
   a(0,t)
   =mC\int_0^t b(0,s)W(t-s)\,ds.
\end{equation}
Appendix~\ref{app:integer-compression} evaluates the two local boundary
series as
\[
\begin{aligned}
   a(0,t)
   &=X(t)^m
     \exp\left(
        \int_0^t
        \bigl[(\alpha_0-mA)X(u)+\gamma_0-mB\bigr]du
     \right),\\
   b(0,t)
   &=X(t)^{m-1}
     \exp\left(
        \int_0^t
        \bigl[(\alpha_0-mA)X(u)+\gamma_0-mB\bigr]du
     \right).
\end{aligned}
\]
Comparison with the functions of Section~\ref{sec:continuous-kernel} gives
\[
   F_m(t)=a(0,t),
   \qquad
   L_m(t)=mC\,b(0,t).
\]
Thus \eqref{eq:integer-first-entry-convolution} is exactly the return equation
of Section~\ref{sec:continuous-kernel}, and the general characteristic
reconstruction is realised at integer index by two local shifted path
triangles.

A path from height \(m-1\) to \(0\) needs at least \(m-1\) steps.  Its
unique shortest realisation consists entirely of down-steps, and hence
\[
   b(0,t)=C^{m-1}t^{m-1}+O(t^m).
\]
Because \(m\) is an integer, the functions
\(F_m(t)=a(0,t)\) and \(L_m(t)=mC\,b(0,t)\) are analytic at the origin.
Moreover, \(L_m\) has a zero of order \(m-1\), with
\[
   L_m^{(q)}(0)=0\quad(0\le q\le m-2),
   \qquad
   L_m^{(m-1)}(0)=m!C^m.
\]
Thus ordinary differentiation of the convolution is legitimate here; no
fractional Abel regularisation is needed.  After \(m\) differentiations,
\eqref{eq:integer-first-entry-convolution} becomes a Volterra equation of
the second kind.

\begin{corollary}[Second-kind return equation and coefficient recursion]
The return series satisfies
\begin{equation}
\label{eq:integer-second-kind-volterra}
   W(t)
   =\frac{1}{m!C^m}
    \left[
       \partial_t^m a(0,t)
       -mC\int_0^t
          \partial_t^m b(0,t-s)W(s)\,ds
    \right].
\end{equation}
Equivalently,
\[
   W(t)
   =\frac{1}{m!C^m}
    \left[
       F_m^{(m)}(t)
       -\int_0^tL_m^{(m)}(t-s)W(s)\,ds
    \right].
\]
If
\[
   F_m(t)=\sum_{n\ge0}f_n\frac{t^n}{n!},
   \qquad
   L_m(t)=\sum_{n\ge0}\lambda_n\frac{t^n}{n!},
\]
then \(\lambda_{m-1}=m!C^m\), \(W_0=1\), and for every \(N\ge1\),
\begin{equation}
\label{eq:integer-return-coefficient-recursion}
   W_N
   =\frac{1}{m!C^m}
    \left[
       f_{N+m}
       -\sum_{q=1}^{N}
          \lambda_{m-1+q}W_{N-q}
    \right].
\end{equation}
\end{corollary}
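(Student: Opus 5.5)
I will differentiate the first-entry convolution \eqref{eq:integer-first-entry-convolution} \(m\) times in \(t\), using the fact, noted just before the corollary, that \(L_m=mC\,b(0,\cdot)\) is analytic with a zero of order \(m-1\) at the origin. Write the equation as \(F_m(t)=\int_0^t L_m(s)W(t-s)\,ds\), with \(F_m(t)=a(0,t)\); equivalently, after \(u=t-s\), \(F_m(t)=\int_0^t L_m(t-u)W(u)\,du\). The standard Leibniz rule for a variable upper limit gives
\[
   \frac{d}{dt}\int_0^t L_m^{(q)}(t-u)W(u)\,du
   =L_m^{(q)}(0)W(t)+\int_0^t L_m^{(q+1)}(t-u)W(u)\,du .
\]
Applying this for \(q=0,1,\dots,m-1\), the boundary terms vanish for \(q\le m-2\) because \(L_m^{(q)}(0)=0\). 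At the last step the boundary term is \(L_m^{(m-1)}(0)W(t)=m!C^mW(t)\). Hence
\[
   F_m^{(m)}(t)=m!C^mW(t)+\int_0^tL_m^{(m)}(t-u)W(u)\,du .
\]
Solving for \(W\) gives the second displayed form. Substituting \(F_m=a(0,\cdot)\) and \(L_m^{(m)}=mC\,\partial_t^m b(0,\cdot)\) gives \eqref{eq:integer-second-kind-volterra}. All functions are analytic near \(0\): \(W\) by Lemma~\ref{lem:app-egf-analytic}, and \(a(0,\cdot)\), \(b(0,\cdot)\) by their explicit formulas. So differentiation under the integral is legitimate.

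For the coefficient recursion, I will extract the coefficient of \(t^N/N!\) on both sides of the \(F_m^{(m)}\) identity. The left side gives \(f_{N+m}\). For the convolution I use the exponential-scale identity
\[
   \int_0^t\frac{(t-u)^p}{p!}\frac{u^r}{r!}\,du=\frac{t^{p+r+1}}{(p+r+1)!},
\]
already used in Proposition~\ref{prop:integer-first-entry}. Since \(L_m^{(m)}(t)=\sum_p\lambda_{p+m}t^p/p!\), the coefficient of \(t^N/N!\) in the convolution is \(\sum_{p+r=N-1}\lambda_{p+m}W_r\). Reindexing with \(q=p+1\), so that \(r=N-q\), turns this into \(\sum_{q=1}^N\lambda_{m-1+q}W_{N-q}\). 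Dividing by \(\lambda_{m-1}=m!C^m\), which is nonzero since \(C>0\), gives \eqref{eq:integer-return-coefficient-recursion}. The value \(\lambda_{m-1}=m!C^m\) is the coefficient form of \(L_m^{(m-1)}(0)=m!C^m\), which comes from \(b(0,t)=C^{m-1}t^{m-1}+O(t^m)\) multiplied by \(mC\). Finally, \(W_0=1\) holds because \(w_{0,0}=1\). As a consistency check, the \(N=0\) case reads \(f_m=m!C^mW_0\), which agrees with \(F_m(t)=C^mt^m+O(t^{m+1})\).

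The main obstacle is bookkeeping rather than analysis. I need to confirm that \(L_m^{(q)}(0)=0\) for all \(q\le m-2\), which follows from the order-\(m-1\) zero. I also need to keep the index shift between \(\lambda_{p+m}\) and \(\lambda_{m-1+q}\) straight. The case \(m=1\) needs a separate glance: there are no vanishing boundary terms, and \(L_1(0)=C\) matches \(1!C^1\).
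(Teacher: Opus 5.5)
Your proposal is correct and follows the paper's own argument: you differentiate the first-entry convolution \(m\) times, and every endpoint term vanishes except \(L_m^{(m-1)}(0)W(t)=m!C^mW(t)\), which gives both forms of the second-kind equation. The only cosmetic difference is that you read off the coefficient of \(t^N/N!\) from the differentiated identity, whereas the paper extracts the coefficient of total length \(N+m\) directly from the undifferentiated convolution; the two computations are equivalent and give the same recursion.
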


The proof by differentiation, including the endpoint term
\(m!C^mW(t)\), is given in Appendix~\ref{app:integer-compression}.  The
recursion is triangular and computes the original return sequence from the
two boundary series \(a(0,t)\) and \(b(0,t)\) of the local shifted model.
For generic affine parameters, integer \(m\) also gives a finite
special-function reduction: the reciprocal return series is a logarithmic
derivative of a Gauss function and can be expressed through finitely many
ordinary derivatives of one incomplete beta function; see
Corollary~\ref{cor:integer-incomplete-beta-reduction}.

We now ask when the correction term in
\eqref{eq:integer-duhamel-reconstruction} has no visible coefficients.  A
path can return from the virtual strip to the visible region only through
the step
\[
   m-1\longrightarrow m,
\]
whose weight is
\[
   \widetilde\alpha_{m-1}
   =A(m-1)+\alpha_0-mA
   =\alpha_0-A.
\]

\begin{proposition}[Criterion for one final deletion]
\label{prop:one-final-deletion-criterion}
Under \eqref{eq:integer-shift-assumption}, the following statements are
equivalent:
\begin{enumerate}
\item \(\alpha_0=A\);
\item \([a_n(x)]_{\ge m}=x^mP_n(x)\) for every \(n\ge0\);
\item \(a_{n,k+m}=w_{n,k}\) for every \(n,k\ge0\).
\end{enumerate}
When these conditions hold,
\[
   x^mw(x,t)=[a(x,t)]_{\ge m},
\]
and
\[
   W(t)=[x^m]a(x,t)
       =\frac1{m!}\,\partial_x^m a(x,t)\big|_{x=0}.
\]
If \(\alpha_0\ne A\), the discrepancy is already visible after two steps:
\[
   a_{2,m}-w_{2,0}=mC(\alpha_0-A).
\]
\end{proposition}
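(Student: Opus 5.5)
The plan is to work entirely from the first-entry decomposition of Proposition~\ref{prop:integer-first-entry}:
\[
a_{n,j}=\mathbf 1_{\{j\ge m\}}w_{n,j-m}+mC\sum_{r+q=n-1}W_rb_{q,j}.
\]
Statements 2 and 3 are the same assertion read coefficientwise on the shifted scale, since \([a_n(x)]_{\ge m}=\sum_{j\ge m}a_{n,j}x^j\) and \(x^mP_n(x)=\sum_{k}w_{n,k}x^{k+m}\). So only \(1\Leftrightarrow3\) needs an argument, and by the displayed identity statement 3 is equivalent to the vanishing of the correction \(\sum_{r+q=n-1}W_rb_{q,j}\) for all \(j\ge m\).

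For \(1\Rightarrow3\), I will show that \(\alpha_0=A\) forces \(b_{q,j}=0\) for every \(q\) and every \(j\ge m\). An auxiliary path counted by \(b\) starts at height \(m-1\). Because every step changes the height by at most one, such a path can reach a height \(j\ge m\) only by using a step \(m-1\to m\) at some point. That step has weight \(\widetilde\alpha_{m-1}=\alpha_0-A=0\), so the path has weight zero. Summing over paths gives \(b_{q,j}=0\) for \(j\ge m\), and hence \(a_{n,j}=w_{n,j-m}\) for \(j\ge m\), which is statement 3.

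Once 1--3 hold, the two displayed consequences are immediate. Summing statement 2 against \(t^n/n!\) gives \(x^mw=[a]_{\ge m}\). Taking the coefficient of \(x^m\), which is the case \(k=0\) of statement 3, gives \(W(t)=[x^m]a(x,t)=\frac1{m!}\partial_x^m a(x,t)|_{x=0}\).

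For \(3\Rightarrow1\), the plan is to compute the discrepancy explicitly, since it also gives the last claim. Take \(n=2\) and \(j=m\). The correction term is
\[
mC\bigl(W_0b_{1,m}+W_1b_{0,m}\bigr).
\]
Here \(b_{0,m}=0\), because \(b(x,0)=x^{m-1}\). Also \(b_{1,m}\) is the weight of the single step \(m-1\to m\), namely \(\alpha_0-A\), and \(W_0=1\). So
\[
a_{2,m}-w_{2,0}=mC(\alpha_0-A).
\]
Under the standing assumption \(C>0\) with \(m\ge1\), we have \(mC\ne0\), so statement 3 at \((n,k)=(2,0)\) forces \(\alpha_0=A\).

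The only step needing care is the path-reachability argument in \(1\Rightarrow3\). The auxiliary weights in the virtual strip may be zero or negative, so I will phrase it through the coefficient recurrence for \(b\) on the half-line \(j\ge0\) rather than through positivity. Concretely, I will prove by induction on \(q\) that \(b_{q,j}=0\) for \(j\ge m\). The base case \(q=0\) holds since \(b(x,0)=x^{m-1}\). For the inductive step, \(b_{q+1,j}\) with \(j\ge m\) is a combination of \(b_{q,j-1}\), \(b_{q,j}\) and \(b_{q,j+1}\). Each of these vanishes by the inductive hypothesis, except \(b_{q,m-1}\) when \(j=m\), and that term carries the factor \(\widetilde\alpha_{m-1}=0\).
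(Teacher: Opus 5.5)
Your proof is correct and is essentially the paper's argument. Both directions rest on the fact that the bridge \(m-1\to m\), of weight \(\widetilde\alpha_{m-1}=\alpha_0-A\), is the only route back to the visible region, and your computation \(a_{2,m}-w_{2,0}=mC\,W_0b_{1,m}=mC(\alpha_0-A)\) is the paper's two-step dip \(m\to m-1\to m\) read through the first-entry identity of Proposition~\ref{prop:integer-first-entry}. The only difference is packaging: you obtain \(1\Rightarrow3\) from the vanishing of \(b_{q,j}\) for \(j\ge m\) by induction, where the paper argues directly on paths. The direct path argument needs no positivity either, since every such path contains the factor \(0\) whatever the signs of the other weights.
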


\begin{proof}
Suppose first that \(\alpha_0=A\).  Then the bridge \(m-1\to m\) has
weight zero, so an auxiliary path which enters the virtual strip cannot make
a nonzero contribution to a visible endpoint.  The visible auxiliary paths
are therefore exactly the shifted images of the original paths, proving
statements 2 and 3.

Conversely, any two-step path from \(m\) back to \(m\) either stays in the
visible region or dips once to \(m-1\) and returns.  The latter is the only
such path which enters the virtual strip, and its weight is
\[
   \widetilde\beta_{m-1}\widetilde\alpha_{m-1}
   =mC(\alpha_0-A).
\]
Hence \(a_{2,m}=w_{2,0}+mC(\alpha_0-A)\).  Since \(mC>0\), equality of the
visible coefficients forces \(\alpha_0=A\).  Statements 2 and 3 are
equivalent by coefficient extraction, and summing over \(n\) gives the two
generating-function identities.
\end{proof}

We finish with two examples having one virtual level.

\begin{example}[A closed bridge]
Take
\[
   \alpha_k=k+1,
   \qquad
   \beta_k=k+2,
   \qquad
   \gamma_k=k+1.
\]
Here \(m=1\) and \(\alpha_0=A=1\), so the bridge \(0\to1\) is closed and
Proposition~\ref{prop:one-final-deletion-criterion} applies.  The first rows
of the original and freely shifted triangles are shown in
Figure~\ref{fig:cascade-basement}.  The auxiliary function satisfies
\[
   a_t=(1+x+x^2)a_x,
   \qquad
   a(x,0)=x.
\]
Writing \(\theta=\sqrt3t/2\), one obtains
\[
   a(x,t)
   =\frac{
      \sqrt3x\cos\theta+(x+2)\sin\theta
   }{
      \sqrt3\cos\theta-(2x+1)\sin\theta
   }.
\]
Since the bridge is closed,
\[
   xw(x,t)=a(x,t)-a(0,t),
\]
and therefore
\[
   w(x,t)
   =\frac{3}{
      (\sqrt3\cos\theta-\sin\theta)
      (\sqrt3\cos\theta-(2x+1)\sin\theta)
   },
\]
\[
   W(t)
   =\frac{3}{(\sqrt3\cos\theta-\sin\theta)^2}.
\]
Thus the complete original triangle is recovered by solving the local
auxiliary equation and deleting its virtual coefficient once, at the end.
\end{example}

\begin{figure}[ht]
\centering
\includegraphics[width=.47\textwidth]{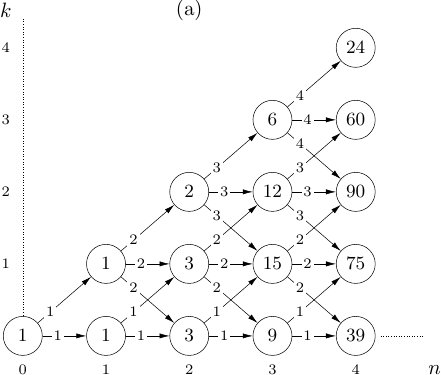}\hfill
\includegraphics[width=.47\textwidth]{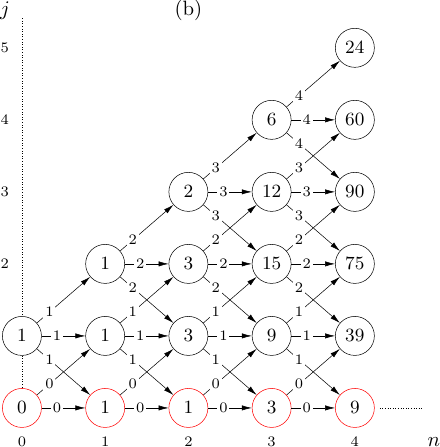}
\caption{Closed bridge at a one-level shift.  Panel (a) is the original
triangle; panel (b) is the free shifted triangle.  Its visible entries are
exactly those of panel (a), shifted upward by one level.}
\label{fig:cascade-basement}
\end{figure}

\begin{example}[An open bridge]
Take
\[
   A=B=C=1,
   \qquad
   \alpha_0=\beta_0=2,
   \qquad
   \gamma_0=1.
\]
Again \(m=1\), but now \(\alpha_0-A=1\), so the bridge \(0\to1\) is open.
The original rows begin
\[
   P_0(x)=1,
   \qquad
   P_1(x)=1+2x,
   \qquad
   P_2(x)=5+6x+6x^2,
\]
whereas free shifted evolution gives
\[
   a_1(x)=1+x+2x^2,
   \qquad
   a_2(x)=1+6x+6x^2+6x^3.
\]
The extra unit in the coefficient of \(x\) comes from the excursion
\[
   1\longrightarrow0\longrightarrow1.
\]
Figure~\ref{fig:feedback-strip}(a) shows the unrestricted auxiliary
triangle.  Panel~(b) shows the correct shifted original triangle: at each
step the dashed, crossed-out node is the newly generated virtual
coefficient, which is discarded before the next row is formed.

For this example \(F_1(t)=a(0,t)\) and \(L_1(t)=b(0,t)\).  The local
recurrences give \(f_3=6\), \(\lambda_1=0\), and \(\lambda_2=1\), so
\eqref{eq:integer-return-coefficient-recursion} yields
\[
   W_2=f_3-\lambda_1W_1-\lambda_2W_0=6-0-1=5.
\]
Continuing the same recursion gives
\[
   W_0,W_1,W_2,W_3,W_4,\ldots
   =1,1,5,17,97,\ldots.
\]
The correction in the return equation removes exactly the same feedback
which produces the spurious coefficient \(6\) in the free auxiliary
triangle.
\end{example}

\begin{figure}[ht]
\centering
\includegraphics[width=.47\textwidth]{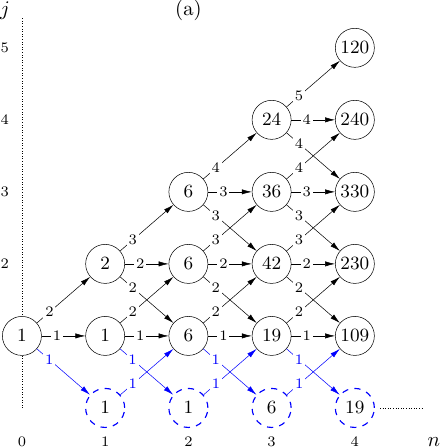}\hfill
\includegraphics[width=.47\textwidth]{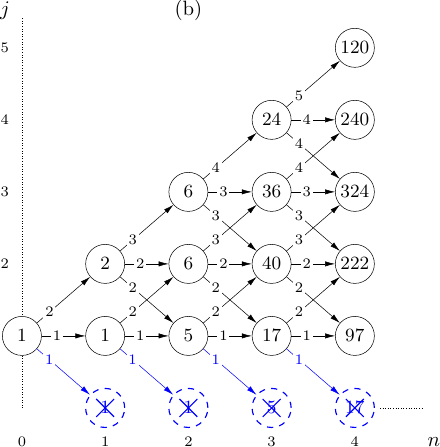}
\caption{Open bridge at a one-level shift.  Panel (a) shows unrestricted
auxiliary evolution.  In panel (b), each dashed crossed-out node is the
virtual contribution generated from the corrected visible row and deleted
before the next step.}
\label{fig:feedback-strip}
\end{figure}

\begin{remark}[Integer index versus integer weights]
A finite shifted floor is controlled by
\[
   \frac{\beta_0-C}{C}=m\in\mathbb Z_{>0},
\]
not by integrality of all step weights.  For example,
\(\beta_k=\tfrac12k+\tfrac32\) has boundary index \(m=2\), although its
weights are not all integers, whereas \(\beta_k=3k+5\) has integer weights
but boundary index \(2/3\), and hence no finite shifted floor.
\end{remark}

The next section shows that the bridge condition \(\alpha_0=A\) has a
second, independent consequence: without any integrality assumption on
\(\nu\), it is exactly the condition for factorisation by terminal height.


\section{Factorisation by terminal height}
\label{sec:collapse}

Example~\ref{ex:ck-nu2-boundary-equation} gave
\[
   w(x,t)=\frac{\sec^3 t}{1-x\tan t}.
\]
Expanding in the height variable shows that its terminal-height columns are
\[
   H_k(t)=\sec^3 t\,\tan^k t.
\]
Thus all columns are obtained from the return series by successive
multiplication by the same function.  We now ask when such a factorisation
can occur for a general height-dependent weighted Motzkin triangle.

For \(k\ge0\), define
\[
   H_k(t)
   :=
   \sum_{n\ge0}w_{n,k}\frac{t^n}{n!}.
\]
Then
\[
   H_0(t)=W(t),
   \qquad
   w(x,t)=\sum_{k\ge0}H_k(t)x^k.
\]
Summing the last-step recurrence \eqref{eq:motzkin-rec} over the length gives
\begin{equation}
\label{eq:collapse-column-recurrence}
   H_k'
   =
   \alpha_{k-1}H_{k-1}
   +\gamma_kH_k
   +\beta_kH_{k+1},
   \qquad k\ge0,
\end{equation}
where the first term is absent for \(k=0\).  The initial conditions are
\[
   H_0(0)=1,
   \qquad
   H_k(0)=0\quad(k\ge1).
\]

Suppose first that the step weights are arbitrary functions of height and
that the columns factor as
\[
   H_k(t)=W(t)Y(t)^k,
   \qquad k\ge0,
\]
for some formal power series \(Y\).  Necessarily \(W=H_0\), and the initial
conditions give
\[
   W(0)=1,
   \qquad
   Y(0)=0.
\]
We determine what this assumption forces before returning to the affine
family.

At height \(k=0\), equation \eqref{eq:collapse-column-recurrence} gives
\begin{equation}
\label{eq:collapse-W-equation}
   W'=(\gamma_0+\beta_0Y)W,
   \qquad
   W(0)=1.
\end{equation}
For \(k\ge1\), cancelling the nonzero factor \(WY^{k-1}\) from the column
equation yields
\begin{equation}
\label{eq:collapse-general-k-equation}
   kY'
   =\alpha_{k-1}
    +(\gamma_k-\gamma_0)Y
    +(\beta_k-\beta_0)Y^2.
\end{equation}
In particular, the equation at height \(1\) is
\begin{equation}
\label{eq:collapse-general-Y-equation}
   Y'
   =\alpha_0
    +(\gamma_1-\gamma_0)Y
    +(\beta_1-\beta_0)Y^2.
\end{equation}
Subtracting \(k\) times \eqref{eq:collapse-general-Y-equation} from
\eqref{eq:collapse-general-k-equation}, we obtain
\[
\begin{aligned}
0={}&\alpha_{k-1}-k\alpha_0\\
&+\bigl(\gamma_k-\gamma_0-k(\gamma_1-\gamma_0)\bigr)Y\\
&+\bigl(\beta_k-\beta_0-k(\beta_1-\beta_0)\bigr)Y^2.
\end{aligned}
\]
If \(H_1=WY\not\equiv0\), then \(Y\not\equiv0\) and \(Y(0)=0\).  The
substitution map from polynomials in an indeterminate \(z\) to formal series,
\(z\mapsto Y(t)\), is therefore injective: the lowest power of a nonzero
polynomial remains visible after substitution.  The three coefficients in
the last display must consequently vanish.

\begin{theor_eng}[Rigidity of terminal-height factorisation]
\label{thm:terminal-height-factorisation}
Consider an arbitrary height-dependent weighted Motzkin rule, and assume
\(H_1\not\equiv0\).  The following statements are equivalent:
\begin{enumerate}
\item there is a formal power series \(Y(0)=0\) such that
\[
   H_k(t)=H_0(t)Y(t)^k
   \qquad(k\ge0);
\]
\item the step weights necessarily have the form
\begin{equation}
\label{eq:factorisation-rigidity-weights}
   \alpha_k=(k+1)\alpha_0,
   \qquad
   \gamma_k=\gamma_0+k(\gamma_1-\gamma_0),
   \qquad
   \beta_k=\beta_0+k(\beta_1-\beta_0).
\end{equation}
\end{enumerate}
When these conditions hold, \(W=H_0\) and \(Y\) are uniquely determined by
\[
   W'=(\gamma_0+\beta_0Y)W,
   \qquad W(0)=1,
\]
\[
   Y'=\alpha_0+(\gamma_1-\gamma_0)Y+(\beta_1-\beta_0)Y^2,
   \qquad Y(0)=0,
\]
and
\begin{equation}
\label{eq:collapse-column-factorisation}
   H_k(t)=W(t)Y(t)^k,
   \qquad k\ge0.
\end{equation}
Consequently,
\begin{equation}
\label{eq:collapse-w}
   w(x,t)=\frac{W(t)}{1-xY(t)}.
\end{equation}
\end{theor_eng}

\begin{proof}
The necessity was established above.  Conversely, assume
\eqref{eq:factorisation-rigidity-weights}, let \(W,Y\) solve the displayed
one-variable equations, and put \(\widehat H_k=WY^k\).  For \(k=0\), the
required column equation is the equation for \(W\).  For \(k\ge1\),
\begin{align*}
   \widehat H_k'
   &=W'Y^k+kWY^{k-1}Y'\\
   &=k\alpha_0\,WY^{k-1}
     +\bigl(\gamma_0+k(\gamma_1-\gamma_0)\bigr)WY^k\\
   &\qquad
     +\bigl(\beta_0+k(\beta_1-\beta_0)\bigr)WY^{k+1},
\end{align*}
which is precisely the column recurrence.  The initial conditions also
agree with the path triangle, so coefficientwise uniqueness gives
\(\widehat H_k=H_k\) for every \(k\).  Summing in \(x\) gives
\eqref{eq:collapse-w}.
\end{proof}

For the affine weights
\(\alpha_k=Ak+\alpha_0\), \(\beta_k=Ck+\beta_0\), and
\(\gamma_k=Bk+\gamma_0\), the conditions on \(\beta_k\) and
\(\gamma_k\) in
\eqref{eq:factorisation-rigidity-weights} already hold.  The remaining
condition is
\[
   Ak+\alpha_0=(k+1)\alpha_0,
\]
equivalently \(\alpha_0=A\).  In fact the equation at height \(2\) already
detects this condition, since it gives
\[
   (\alpha_0-A)WY=0.
\]
Thus, in the nondegenerate affine model,
\begin{equation}
\label{eq:collapse-Y-equation}
   Y'=A+BY+CY^2,
   \qquad
   Y(0)=0,
\end{equation}
and factorisation holds if and only if \(\alpha_0=A\).

The rigidity theorem does not assume affine weights in advance.  Within
the affine family it reduces to \(\alpha_0=A\), and the first three
terminal-height columns already detect this condition.  At integer boundary
index, Section~\ref{sec:integer-shift} found the same equality by a
different argument: it closes the bridge from the virtual strip back to the
visible region.  Thus the bridge criterion and the affine specialisation of
the factorisation criterion arise independently and coincide.

Because \(H_0(0)=1\), the common ratio is determined by the first two
columns:
\[
   Y(t)=\frac{H_1(t)}{H_0(t)},
   \qquad
   H_k(t)=H_0(t)\left(\frac{H_1(t)}{H_0(t)}\right)^k.
\]
Thus the entire height-refined triangle is determined by its return column
and its height-one column.  After dividing the \(k\)-th column by
\(k!\), one obtains the exponential Riordan array \([W,Y]\).  The
factorised Riordan form is classical \cite{Barry2011,BarryMwafise2018}.
The content of Theorem~\ref{thm:terminal-height-factorisation} is the direct
path-column rigidity statement: geometric factorisation forces the complete
arrival-proportional affine rule.  For the affine model of this paper, the
resulting condition is exactly the virtual-bridge criterion.

The two one-variable equations can be solved explicitly.  The following
form is convenient when \(C\ne0\).

\begin{corollary}[Explicit factorised form]
\label{cor:collapse-explicit-form}
Assume \(\alpha_0=A\) and \(C\ne0\), and put
\[
   p:=\frac{\beta_0}{C}.
\]
Choose either square root \(\kappa\) of
\[
   \kappa^2=B^2-4AC,
\]
and define
\begin{equation}
\label{eq:collapse-D-def}
   D(t)
   :=
   \cosh\frac{\kappa t}{2}
   -\frac{B}{\kappa}\sinh\frac{\kappa t}{2},
\end{equation}
with the Taylor-limit interpretation when \(\kappa=0\).  Then
\begin{equation}
\label{eq:collapse-explicit-Y}
   Y(t)
   =
   \frac{2A}{\kappa}
   \frac{\sinh(\kappa t/2)}{D(t)},
\end{equation}
\begin{equation}
\label{eq:collapse-explicit-W}
   W(t)
   =
   e^{(\gamma_0-pB/2)t}D(t)^{-p},
\end{equation}
and \(H_k=WY^k\).  The resulting Taylor series are independent of the
choice of sign of \(\kappa\).  When \(\kappa=0\),
\begin{equation}
\label{eq:collapse-double-root}
   D(t)=1-\frac{B}{2}t,
   \qquad
   Y(t)=\frac{At}{1-Bt/2}.
\end{equation}
\end{corollary}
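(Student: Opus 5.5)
The plan is verification rather than derivation.  By Theorem~\ref{thm:terminal-height-factorisation} (affine case, \(\alpha_0=A\)), \(W\) and \(Y\) are the unique formal power series solving
\[
   Y'=A+BY+CY^2,\quad Y(0)=0,
   \qquad
   W'=(\gamma_0+\beta_0Y)W,\quad W(0)=1.
\]
So it suffices to show that the displayed \(Y\) and \(W\) are formal power series with the right initial values and satisfy these two equations.  Both facts are coefficientwise, so I may compute with analytic functions near \(t=0\).

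The first step is the sign of \(\kappa\) and the case \(\kappa=0\).  Since \(\cosh(\kappa t/2)\), \(\kappa^{-1}\sinh(\kappa t/2)\) and \(\kappa^{-1}\sinh(\kappa t/2)\cdot\kappa^{-1}\) are even in \(\kappa\)-structure (power series in \(\kappa^2\)), \(D\) and \(Y\) are power series in \(t\) whose coefficients are polynomials in \(\kappa^2=B^2-4AC\).  Hence they are independent of the choice of root, and the case \(\kappa=0\) is their specialisation.  Letting \(\kappa\to0\) gives \(D=1-Bt/2\) and \(Y=At/(1-Bt/2)\), which is \eqref{eq:collapse-double-root}.  Since \(D(0)=1\), the power \(D^{-p}\) is a well-defined formal series for every real or complex \(p\), and \(W(0)=1\), \(Y(0)=0\).

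For the Riccati equation I linearise, taking \(C\ne0\).  Set \(Y=-u'/(Cu)\).  Then \(Y'=A+BY+CY^2\) becomes \(u''-Bu'+ACu=0\), whose characteristic roots are \((B\pm\kappa)/2\).  The solution with \(u(0)=1\), \(u'(0)=0\) (forced by \(Y(0)=0\)) is
\[
   u(t)=e^{Bt/2}\Bigl(\cosh\tfrac{\kappa t}{2}-\tfrac{B}{\kappa}\sinh\tfrac{\kappa t}{2}\Bigr)=e^{Bt/2}D(t).
\]
Next I check \(D'=-(AC\cdot 2/\kappa)\sinh(\kappa t/2)-(B/2)D\); this uses \((\kappa^2-B^2)/(2\kappa)=-2AC/\kappa\).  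It gives
\[
   Y=-\frac{u'}{Cu}=-\frac{1}{C}\Bigl(\frac B2+\frac{D'}{D}\Bigr)=\frac{2A}{\kappa}\,\frac{\sinh(\kappa t/2)}{D},
\]
which is \eqref{eq:collapse-explicit-Y}.  Alternatively, one can plug \eqref{eq:collapse-explicit-Y} in directly; the linearisation simply explains where it comes from.

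For \(W\), the same identity gives \(\beta_0Y=pCY=-p(B/2+D'/D)\).  Hence
\[
   (\log W)'=\gamma_0-\frac{pB}{2}-p\,\frac{D'}{D},
\]
and integrating with \(W(0)=1\), \(D(0)=1\) yields \eqref{eq:collapse-explicit-W}.  Finally \(H_k=WY^k\) follows from \eqref{eq:collapse-column-factorisation}.

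The only delicate point is the formal bookkeeping.  The computation divides by \(\kappa\) and uses hyperbolic functions, so I would justify it for \(\kappa\ne0\) as an identity of power series whose coefficients are polynomial in \((A,B,C,\kappa^2)\).  It then extends by polynomial identity (continuity) to \(\kappa=0\), and to complex \(\kappa\) when \(B^2<4AC\).
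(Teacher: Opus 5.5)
Your proposal is correct and is essentially the paper's own argument in Appendix~\ref{app:riccati-collapse}. It uses the same linearisation \(Y=-u'/(Cu)\), the same solution \(u=e^{Bt/2}D\) with \(D'+\tfrac B2D=-\tfrac{2AC}{\kappa}\sinh\tfrac{\kappa t}{2}\), and the same passage to \(W=e^{\gamma_0t}u^{-p}\), which is what your identity \(\beta_0Y=-p(B/2+D'/D)\) integrates to. Your power-series-in-\(\kappa^2\) justification of sign independence and of the case \(\kappa=0\) makes the paper's remarks explicit. One small slip: the third quantity you list, \(\kappa^{-2}\sinh(\kappa t/2)\), is not even in \(\kappa\), but it is not needed, since \(D\) and \(Y\) involve only \(\cosh(\kappa t/2)\) and \(\kappa^{-1}\sinh(\kappa t/2)\).
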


The calculation is given in Appendix~\ref{app:riccati-collapse}.  Since
\(D(0)=1\), the power \(D^{-p}\) is unambiguous as a formal Taylor series.
In the positive-index regime, \(p=\nu+1\).

The continuous boundary parameter also has a direct coefficientwise
meaning which does not require factorisation.

\begin{proposition}[Polynomial dependence on the boundary parameter]
Fix the up- and level-step weights and a nonzero constant \(C\), and write
\[
   \beta_h=C(h+p).
\]
For every \(n,k\ge0\), the path total \(w_{n,k}(p)\) is a polynomial in
\(p\), with
\[
   \deg_p w_{n,k}(p)
   \le \left\lfloor\frac{n-k}{2}\right\rfloor.
\]
If the remaining step weights and \(C\) are nonnegative, then this polynomial
has nonnegative coefficients.
\end{proposition}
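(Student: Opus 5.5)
We argue directly from the path expansion. Fix the up- and level-step weights \(\alpha_h,\gamma_h\) and write every down-step weight as \(\beta_h=Ch+Cp\). Then \(w_{n,k}(p)\) is the sum, over all Motzkin paths of length \(n\) from height \(0\) to height \(k\), of the product of the step weights. Each such product is a product of factors independent of \(p\) (the up- and level-steps), together with one factor \(C(h+p)\) for each down-step. Hence every path weight is a polynomial in \(p\), and so is the finite sum \(w_{n,k}(p)\).

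For the degree, let a path have \(u\) up-steps, \(d\) down-steps and \(\ell\) level-steps. Then \(u+d+\ell=n\) and \(u-d=k\), so \(2d=n-k-\ell\le n-k\). Thus \(d\le\lfloor (n-k)/2\rfloor\). Since each down-step contributes a factor linear in \(p\), the weight of the path has degree at most \(d\) in \(p\). The stated bound on \(\deg_p w_{n,k}(p)\) follows by summing over paths.

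For nonnegativity, assume \(C\ge0\) and \(\alpha_h,\gamma_h\ge0\) for all \(h\). Then each factor \(C(h+p)=Ch+Cp\), viewed as a polynomial in \(p\), has nonnegative coefficients \(Ch\) and \(C\), because \(h\ge0\). The \(p\)-independent factors are nonnegative constants. Products and sums of polynomials with nonnegative coefficients have nonnegative coefficients, so \(w_{n,k}(p)\) does as well.

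As a consistency check, the same conclusion follows by induction on \(n\) from the recurrence \eqref{eq:motzkin-rec}, which here reads
\[
w_{n+1,k}=\alpha_{k-1}w_{n,k-1}+\gamma_kw_{n,k}+C(k+p)w_{n,k+1}.
\]
Assume \(\deg_p w_{n,j}\le\lfloor(n-j)/2\rfloor\) for all \(j\). The three terms then have degrees at most \(\lfloor(n-k+1)/2\rfloor\), \(\lfloor(n-k)/2\rfloor\), and \(1+\lfloor(n-k-1)/2\rfloor\). Each of these is at most \(\lfloor(n+1-k)/2\rfloor\), which closes the induction. The base case is \(w_{0,k}=\mathbf 1_{\{k=0\}}\).
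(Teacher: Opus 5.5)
Your proof is correct and is essentially the paper's argument: expand \(w_{n,k}(p)\) as a finite sum over paths, observe that a path ending at height \(k\) has at most \(\lfloor (n-k)/2\rfloor\) down-steps, each contributing one factor \(C(h+p)\) linear in \(p\), and write \(C(h+p)=Ch+Cp\) with \(h\ge0\) to obtain nonnegative coefficients. Your inductive check via the recurrence is also valid but not needed; the paper instead adds a remark beyond the statement, namely that the bound is attained for \(k=0\) and even \(n\) via the path \((UD)^{n/2}\).
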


\begin{proof}
A path ending at height \(k\) and having \(d\) down-steps satisfies
\(d\le\lfloor(n-k)/2\rfloor\).  Each down-step contributes one factor
\(C(h+p)\), linear in \(p\), so the weight of the path has degree at most
\(d\).  Expanding each factor as \(Ch+Cp\) also gives the coefficientwise
nonnegativity and the interpretation by marked down-steps.  For \(k=0\) and
even \(n=2d\) the bound is attained: the alternating path \((UD)^d\)
contributes \(\prod_{j=0}^{d-1}\alpha_j\cdot C^d p^d\), and no cancellation
is possible when the up-step weights are positive.
\end{proof}

The Dyck example from Section~\ref{sec:continuous-kernel} is the case
\(p=3\) of a full one-parameter family.  Take
\begin{equation}
\label{eq:collapse-dyck-weights}
   \alpha_k=k+1,
   \qquad
   \beta_k=k+p,
   \qquad
   \gamma_k=0.
\end{equation}
Here \(\alpha_0=A=1\), so the factorisation criterion applies.  Equations
\eqref{eq:collapse-Y-equation}--\eqref{eq:collapse-W-equation} reduce to
\[
   Y'=1+Y^2,
   \qquad
   \frac{W'}{W}=pY,
   \qquad
   Y(0)=0,
   \quad
   W(0)=1.
\]
Hence
\[
   Y(t)=\tan t,
   \qquad
   W(t)=\sec^p t.
\]

\begin{corollary}[Dyck paths and powers of the secant]
For the weights \eqref{eq:collapse-dyck-weights},
\[
   H_k(t)=\sec^p t\,\tan^k t,
   \qquad
   w(x,t)=\frac{\sec^p t}{1-x\tan t}.
\]
Equivalently,
\[
   w_{n,k}
   =
   n!\,[t^n] \,\sec^p t\,\tan^k t.
\]
\end{corollary}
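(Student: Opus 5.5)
The plan is to specialise Theorem~\ref{thm:terminal-height-factorisation} to the weights \eqref{eq:collapse-dyck-weights} and then solve the two resulting one-variable equations explicitly.

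First, check the rigidity hypotheses. We have \(\alpha_k=k+1=(k+1)\alpha_0\) with \(\alpha_0=1\), and \(\gamma_k=0\), which is affine in \(k\). Also \(\beta_k=k+p=\beta_0+k(\beta_1-\beta_0)\) with \(\beta_0=p\) and \(\beta_1-\beta_0=1\). So \eqref{eq:factorisation-rigidity-weights} holds.

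For the nondegeneracy assumption, note that \(H_1\not\equiv0\) because \(w_{1,1}=\alpha_0=1\). The converse direction of the theorem then gives \(H_k=WY^k\). Here \(W\) and \(Y\) are the unique formal solutions of
\[
   W'=pYW,\quad W(0)=1,\qquad Y'=1+Y^2,\quad Y(0)=0,
\]
and the generating function is \(w(x,t)=W/(1-xY)\).

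Next, solve these equations. The series \(Y=\tan t\) satisfies \(Y'=\sec^2t=1+\tan^2t\) and \(Y(0)=0\). By coefficientwise uniqueness of formal solutions of \(Y'=1+Y^2\), we get \(Y=\tan t\).

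For the return series, \(W=\sec^pt\) gives \(W'/W=p\tan t\) and \(W(0)=1\). This is unambiguous as a formal series, since \(\sec t=1+O(t^2)\), so \(\sec^pt=\exp(p\log\sec t)\) for any \(p\). The linear equation \(W'=pYW\) has a unique formal solution with \(W(0)=1\), because its coefficients are determined recursively. Alternatively, one can simply invoke Corollary~\ref{cor:collapse-explicit-form} with \(A=C=1\), \(B=0\), \(\gamma_0=0\), \(\kappa=2i\); then \(D=\cos t\), \(Y=\tan t\) and \(W=\cos^{-p}t\).

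Finally, extract coefficients. By definition \(H_k=\sum_n w_{n,k}t^n/n!\), so \(w_{n,k}=n![t^n]\sec^pt\tan^kt\).

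No step is a real obstacle. The only mild point is justifying \(\sec^pt\) for non-integer \(p\) as a formal series and confirming it solves the ODE. That is immediate from the logarithmic-derivative formulation.
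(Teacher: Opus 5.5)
Your proposal is correct and takes the same route as the paper. You check that \(\alpha_0=A=1\), so the weights have the arrival-proportional affine form, apply the converse direction of Theorem~\ref{thm:terminal-height-factorisation}, and solve \(Y'=1+Y^2\), \(W'=pYW\) to obtain \(Y=\tan t\) and \(W=\sec^p t\). Your added remarks on formal uniqueness, the meaning of \(\sec^p t\) for non-integer \(p\), and the cross-check via Corollary~\ref{cor:collapse-explicit-form} with \(\kappa=2i\) are all sound.
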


The return-column specialisation \(W(t)=\sec^p t\), together with its
higher-order Euler and orthogonal-polynomial interpretations, is classical;
see \cite{Barry2012,JiuShi2019}.  For every real \(p\ge0\), these are
nonnegative weighted Dyck paths; for integer \(p\), the step weights may be
read as numbers of colours.  The case \(p=3\) recovers
Example~\ref{ex:ck-nu2-boundary-equation}.

For the secant family, the degree bound is sharp for returns of length
\(2n\): the alternating path \((UD)^n\) contributes the leading term
\(p^n\).

The same criterion gives genuinely Motzkin examples with level steps.  Take
\[
   \alpha_k=k+1,
   \qquad
   \beta_k=k+p,
   \qquad
   \gamma_k=k+1.
\]
Put
\[
   D(t)
   :=
   \cos\frac{\sqrt3t}{2}
   -\frac1{\sqrt3}\sin\frac{\sqrt3t}{2}.
\]
Corollary~\ref{cor:collapse-explicit-form} gives
\begin{equation}
\label{eq:collapse-motzkin-YW}
   Y(t)
   =
   \frac{2}{\sqrt3}
   \frac{\sin(\sqrt3t/2)}{D(t)},
   \qquad
   W(t)
   =
   e^{(1-p/2)t}D(t)^{-p}.
\end{equation}
For \(p=2\), the exponential factor disappears and
\[
   W(t)=D(t)^{-2}
   =1+t+3\frac{t^2}{2!}+9\frac{t^3}{3!}
    +39\frac{t^4}{4!}+189\frac{t^5}{5!}+\cdots.
\]
These are the return weights of the one-level closed-bridge example in
Section~\ref{sec:integer-shift}; formula
\eqref{eq:collapse-motzkin-YW} simultaneously gives every positive terminal
height.

The elementary degeneration \(C=0\) is recorded in
Appendix~\ref{app:riccati-collapse}.

Thus \(\alpha_0=A\) has two manifestations.  At integer boundary index it
makes the unique bridge out of the virtual strip vanish; for arbitrary
boundary parameter it is exactly the nondegenerate condition for
factorisation of every terminal-height column.


\section{Jacobi data and a boundary-perturbed birth--death model}
\label{sec:external-readings}

Sections~\ref{sec:continuous-kernel}--\ref{sec:collapse} use the oriented
step weights to determine both the return series and the full
height-refined triangle.  We close the main development with two
complementary viewpoints.  First, we determine exactly how much of the
oriented weighting is retained when only return paths are recorded.  This
leads to the standard Jacobi continued fraction and an explicit affine
Hankel product.  Second, we consider a linear birth--death model with a
single perturbation at the floor.  Its boundary equation has the same
characteristic mechanism, and its boundary index is a ratio of transition
rates, which is naturally noninteger.

\subsection{Return invariance and Jacobi data}

The weights attached to the two directions of the same height edge may be
redistributed without changing any return weight.

\begin{proposition}[Rescaling the two directions of an edge]
Let \(q_h\ne0\) for \(h\ge0\), and replace the step weights by
\[
   \widetilde\alpha_h=q_h\alpha_h,
   \qquad
   \widetilde\beta_h=q_h^{-1}\beta_h,
   \qquad
   \widetilde\gamma_h=\gamma_h.
\]
If \(\widetilde w_{n,k}\) denotes the resulting path total, then
\begin{equation}
\label{eq:edge-rescaling-endpoint}
   \widetilde w_{n,k}
   =\left(\prod_{h=0}^{k-1}q_h\right)w_{n,k},
\end{equation}
with the empty product equal to \(1\).  In particular,
\[
   \widetilde w_{n,0}=w_{n,0}.
\]
\end{proposition}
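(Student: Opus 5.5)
The plan is to prove the identity path by path, which gives it coefficientwise. Fix a Motzkin path \(\omega\) of length \(n\) ending at height \(k\). For each \(h\ge0\), let \(u_h(\omega)\) be the number of up-steps \(h\to h+1\) and \(d_h(\omega)\) the number of down-steps \(h+1\to h\). Level steps are unaffected by the rescaling. The new weight of \(\omega\) is therefore its old weight multiplied by
\[
   \prod_{h\ge0}q_h^{\,u_h(\omega)-d_h(\omega)}.
\]
Only finitely many factors differ from \(1\), since the path has finite length.

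The key step is a crossing count for the edge between heights \(h\) and \(h+1\). The path starts at height \(0\) and ends at height \(k\), and it moves by \(\pm1\) or \(0\) at each step. Consider the number of times it crosses this edge upward minus the number of times it crosses downward. This difference is \(1\) if the edge separates the start from the end, which happens exactly when \(h<k\), and \(0\) otherwise. To verify it, track the indicator of being at height \(\ge h+1\). Each up-crossing switches this indicator from \(0\) to \(1\), and each down-crossing switches it back. No other step changes the indicator. So \(u_h-d_h\) equals the final value of the indicator minus its initial value, namely \(\mathbf 1_{\{k\ge h+1\}}-0\).

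Consequently every path ending at height \(k\) has its weight multiplied by the same constant \(\prod_{h=0}^{k-1}q_h\). Summing over all paths of length \(n\) ending at height \(k\) gives \eqref{eq:edge-rescaling-endpoint}. Setting \(k=0\) makes the product empty, so \(\widetilde w_{n,0}=w_{n,0}\).

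As a cross-check, one could instead argue by induction on \(n\) using the recurrence \eqref{eq:motzkin-rec}. Substitute \(\widetilde w_{n,j}=\pi_jw_{n,j}\), where \(\pi_j=\prod_{h<j}q_h\). The up term picks up \(q_{k-1}\pi_{k-1}=\pi_k\), the level term picks up \(\pi_k\), and the down term picks up \(q_k^{-1}\pi_{k+1}=\pi_k\). So the recurrence is preserved with the common factor \(\pi_k\).

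No step is a real obstacle; the only point needing care is the telescoping crossing count, and the induction check makes it routine. The hypothesis \(q_h\ne0\) is used only so that \(q_h^{-1}\) is defined.
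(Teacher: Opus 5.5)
Your proof is correct and is essentially the paper's argument: the new path weight differs from the old one by \(\prod_h q_h^{U_h-D_h}\), and the net crossing count of each edge \(\{h,h+1\}\) is \(1\) for \(h<k\) and \(0\) otherwise. Your indicator-function justification of that crossing count is a detail the paper states without proof. The inductive check via the recurrence \eqref{eq:motzkin-rec} is a valid extra that the paper does not include.
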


\begin{proof}
For a path ending at height \(k\), let \(U_h\) and \(D_h\) be the numbers of
up- and down-crossings of the edge between heights \(h\) and \(h+1\).
Then
\[
   U_h-D_h=
   \begin{cases}
      1,&0\le h<k,\\
      0,&h\ge k.
   \end{cases}
\]
The ratio of the new path weight to the old one is
\(\prod_hq_h^{U_h-D_h}\), which is the factor in
\eqref{eq:edge-rescaling-endpoint}.
\end{proof}

For a return path, let \(L_h\) be the number of level-steps at height
\(h\).  Since the numbers of up- and down-crossings of the edge
\(\{h,h+1\}\) are equal, say \(U_h=D_h\), the path weight is
\[
   \prod_{h\ge0}\gamma_h^{L_h}
   \prod_{h\ge0}(\alpha_h\beta_h)^{U_h}.
\]
Thus the return sequence depends on the oriented edge weights only through
the Jacobi data
\[
   \mathfrak b_h:=\gamma_h,
   \qquad
   \Lambda_{h+1}:=\alpha_h\beta_h,
   \qquad h\ge0.
\]
This direct argument also covers vanishing edge weights.  Return paths retain
the level weights and the products of opposite edge weights, but not their
separate orientation.  Equivalently, reversing a return path turns an
up-step \(h\to h+1\), of weight \(\alpha_h\), into a down-step
\(h+1\to h\), whose arrival-height weight is \(\beta_h\), and conversely.
The positive-height columns do retain the orientation, as the factor in
\eqref{eq:edge-rescaling-endpoint} shows.  In particular, the factorisation
criterion of Section~\ref{sec:collapse} is strictly stronger than a
statement about the return sequence alone.

For affine weights,
\begin{equation}
\label{eq:affine-jacobi-products}
   \Lambda_{k+1}
   =(Ak+\alpha_0)(Ck+\beta_0).
\end{equation}
The quadratic polynomial on the right has a factor \(k+1\) precisely when
\[
   (\alpha_0-A)(\beta_0-C)=0.
\]
Hence, in the positive-index regime \(\beta_0\ne C\), the bridge condition
\(\alpha_0=A\) is also exactly the condition under which path reversal
produces a balanced affine model with the same return sequence.  This
observation concerns \(W\) only; the height-refined triangle changes under
reversal.

\begin{remark}[Dual boundary index]
Assume \(A\ne0\) and define
\[
   \nu^{\vee}:=\frac{\alpha_0-A}{A}
              =\frac{\alpha_0}{A}-1.
\]
Path reversal exchanges \((A,\alpha_0)\) with \((C,\beta_0)\), so
\(\nu^{\vee}\) is the boundary index of the reversed affine model.  Hence,
when \(A>0\) and \(\nu^{\vee}\in\mathbb Z_{>0}\), the same return sequence
has a finite shifted-floor description after reversal, even if the original
index \(\nu\) is noninteger.  In the hypergeometric parameters of
Appendix~\ref{app:continuous-kernel-details},
\(a=\nu^{\vee}+1\) and \(b=\nu+1\); the symmetry \(a\leftrightarrow b\)
reflects this reversal.
\end{remark}

Ordinary generating functions are natural for first-return decomposition,
so define the formal series
\[
   G(z):=\sum_{n\ge0}W_nz^n.
\]
For each height \(h\), let \(G_h(z)\) enumerate excursions which start and
end at \(h\) and never go below \(h\), with the inherited weights.  A
nonempty excursion begins either with a level-step at \(h\), or with an
up-step, an excursion based at \(h+1\), and the matching down-step; in both
cases an arbitrary excursion based at \(h\) may follow.  Therefore
\[
   G_h
   =1+\mathfrak b_hzG_h
    +\Lambda_{h+1}z^2G_{h+1}G_h.
\]
Iterating from \(G_0=G\) gives the Jacobi continued fraction
\[
   G(z)
   =
   \cfrac{1}{1-\mathfrak b_0z-
      \cfrac{\Lambda_1z^2}{1-\mathfrak b_1z-
      \cfrac{\Lambda_2z^2}{1-\mathfrak b_2z-\ddots}}}.
\]
This is the Flajolet--Viennot correspondence in the present notation
\cite{Flajolet1980CF,Viennot1985}.  When the data are numerical with
\(\mathfrak b_h\in\mathbb R\) and \(\Lambda_h>0\), they are also the
recurrence coefficients of a Jacobi matrix and \((W_n)\) is its moment
sequence \cite{Chihara1978,Ismail2005}.

\begin{proposition}[Return-equivalent oriented models]
Let
\[
   \Delta_N:=\det(W_{i+j})_{i,j=0}^{N},
\]
and assume \(\Delta_N\ne0\) for every \(N\ge0\).  Then the return sequence
uniquely determines the Jacobi data
\[
   \gamma_h,
   \qquad
   \alpha_h\beta_h
   \qquad(h\ge0).
\]
Consequently, two oriented weighted Motzkin models with this quasi-definite
return sequence have the same returns if and only if these data agree.  If
all oriented edge weights are nonzero, the two models differ exactly by a
reciprocal edge rescaling
\[
   \widetilde\alpha_h=q_h\alpha_h,
   \qquad
   \widetilde\beta_h=q_h^{-1}\beta_h.
\]
\end{proposition}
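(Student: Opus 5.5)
The proof will rest on the classical theory of quasi-definite moment functionals together with the Jacobi continued fraction just derived. First, the return sequence is the moment sequence of the Jacobi data: by the first-return decomposition, \(G(z)=\sum W_nz^n\) equals the J-fraction with coefficients \(\mathfrak b_h=\gamma_h\) and \(\Lambda_{h+1}=\alpha_h\beta_h\). Define the linear functional \(\mathcal L(x^n)=W_n\). The hypothesis \(\Delta_N\ne0\) for all \(N\) makes \(\mathcal L\) quasi-definite. Hence there is a unique family of monic orthogonal polynomials \(p_N\) with \(\deg p_N=N\), and it satisfies a three-term recurrence
\[
   p_{N+1}(x)=(x-b_N)p_N(x)-\lambda_Np_{N-1}(x),\qquad \lambda_N=\frac{\Delta_{N-2}\Delta_N}{\Delta_{N-1}^2}\ne0
\]
(with \(\Delta_{-1}=1\)), where \(b_N\) and \(\lambda_N\) are determined by \(\mathcal L\) alone, that is, by \((W_n)\).

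The central step is to show that the J-fraction coefficients of the path model coincide with these recurrence coefficients. I would argue this via the path interpretation of the Jacobi matrix. Let \(J\) be the tridiagonal matrix with diagonal \(\gamma_h\), superdiagonal \(1\) and subdiagonal \(\Lambda_{h+1}\). Then \(W_n=(J^n)_{00}\), because moving the product \(\alpha_h\beta_h\) onto one side of each edge is an edge rescaling, and the earlier rescaling proposition shows this leaves returns unchanged. Next, define \(p_N\) by the recurrence with coefficients \(\gamma_N\) and \(\Lambda_N\). A standard Viennot-type argument then gives \(\mathcal L(x^kp_N)=0\) for \(k<N\), and \(\mathcal L(x^Np_N)=\Lambda_1\cdots\Lambda_N\). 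The cleanest route is to note that \(p_N(J)e_0\) has entries supported at height \(N\), since \(p_N(J)e_0=e_N\). This gives \(\mathcal L(x^kp_N)=e_0^{T}J^ke_N\), which vanishes for \(k<N\) because a path from height \(N\) needs at least \(N\) steps to reach \(0\).

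By uniqueness of monic orthogonal polynomials for a quasi-definite functional, these \(p_N\) are the ones from the first paragraph. The coefficients are then recovered from \(\mathcal L\): \(\Lambda_1\cdots\Lambda_N=\Delta_N/\Delta_{N-1}\), and \(\gamma_N\) is fixed by \(\mathcal L(x p_N^2)/\mathcal L(p_N^2)\). This proves that \(\gamma_h\) and \(\alpha_h\beta_h\) are determined by the returns. The quasi-definiteness also forces every \(\Lambda_h\ne0\), which makes the division legitimate.

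For the equivalence, the direction "same data implies same returns" is the crossing-count formula for return path weights displayed before the proposition; the converse is the determination just proved. For the final statement, suppose all oriented weights are nonzero and \(\alpha_h\beta_h=\widetilde\alpha_h\widetilde\beta_h\). Set \(q_h=\widetilde\alpha_h/\alpha_h\); then \(\widetilde\beta_h=q_h^{-1}\beta_h\) automatically, and the level weights already agree.

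I expect the main obstacle to be the orthogonality verification, and in particular doing it purely formally, without positivity, so that it covers the quasi-definite case. The matrix identity \(p_N(J)e_0=e_N\), checked by induction from the recurrence, handles this without any analytic input.
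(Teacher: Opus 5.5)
Your proposal is correct and follows the paper's proof. The ``if'' direction comes from the crossing-count formula, the ``only if'' direction from uniqueness of the three-term recurrence coefficients of the quasi-definite functional \(x^n\mapsto W_n\), and the rescaling from \(q_h=\widetilde\alpha_h/\alpha_h\). The one difference is that the paper cites this uniqueness from Chihara and Ismail, whereas you verify directly, through the transfer-matrix identity, that the path-model polynomials are the monic orthogonal ones.

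There are two harmless slips. With your stated convention (superdiagonal \(1\), subdiagonal \(\Lambda_{h+1}\)), the identity should read \(e_0^{T}p_N(J)=e_N^{T}\), equivalently \(p_N(J)e_0=\Lambda_1\cdots\Lambda_Ne_N\). Also, \(W_n=(J^n)_{00}\) should be justified by the direct crossing-count formula rather than by the rescaling proposition, which would require \(\alpha_h\ne0\).
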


\begin{proof}
The direct path argument above proves the ``if'' direction.  Under the
nonvanishing Hankel hypothesis, the moment functional is quasi-definite and
its Jacobi coefficients are unique \cite{Chihara1978,Ismail2005}; hence the
return sequence determines \(\mathfrak b_h=\gamma_h\) and
\(\Lambda_{h+1}=\alpha_h\beta_h\).  If all edge weights are nonzero, take
\(q_h=\widetilde\alpha_h/\alpha_h\); equality of the edge products then
gives the stated formula for \(\widetilde\beta_h\).
\end{proof}

For affine weights, a quasi-definite return sequence therefore determines
the five combinations
\[
   B,\qquad \gamma_0,\qquad AC,\qquad
   A\beta_0+C\alpha_0,\qquad \alpha_0\beta_0,
\]
which are exactly the coefficients of \(\gamma_k\) and of the quadratic
polynomial \((Ak+\alpha_0)(Ck+\beta_0)\).

The classical Hankel determinant formula for Jacobi moments
\cite{Flajolet1980CF,Chihara1978,Ismail2005,Krattenthaler2015}, specialised
to \eqref{eq:affine-jacobi-products}, gives the following affine product.

\begin{corollary}[Hankel product for affine returns]
For \(N\ge0\),
\begin{equation}
\label{eq:general-affine-hankel}
   \det\bigl(W_{i+j}\bigr)_{i,j=0}^{N}
   =
   \prod_{r=1}^{N}
   \left[
      (A(r-1)+\alpha_0)(C(r-1)+\beta_0)
   \right]^{N+1-r}.
\end{equation}
\end{corollary}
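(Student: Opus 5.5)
The plan is to specialise the classical Hankel determinant formula for moments of a Jacobi continued fraction to the affine data of \eqref{eq:affine-jacobi-products}. The input is the continued fraction for \(G(z)=\sum_n W_nz^n\) derived above. Its coefficients are \(\mathfrak b_h=\gamma_h\) and \(\Lambda_{h+1}=\alpha_h\beta_h\). The classical theorem states that if
\[
   G(z)=\cfrac{1}{1-\mathfrak b_0z-\cfrac{\Lambda_1z^2}{1-\mathfrak b_1z-\ddots}},
\]
then for every \(N\ge0\),
\[
   \det(W_{i+j})_{i,j=0}^N=\Lambda_1^N\Lambda_2^{N-1}\cdots\Lambda_N^{1}.
\]
This holds as a polynomial identity in the indeterminates \(\mathfrak b_h,\Lambda_h\), independently of any quasi-definiteness assumption. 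Substituting
\[
   \Lambda_r=\alpha_{r-1}\beta_{r-1}=(A(r-1)+\alpha_0)(C(r-1)+\beta_0)
\]
then gives \eqref{eq:general-affine-hankel} at once. The level weights \(\gamma_h\) drop out, as the formula predicts.

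For self-containedness I would sketch the classical identity by one of two routes. The first is Viennot's combinatorial argument. Write \(W_{i+j}\) as the total weight of Motzkin paths from \((-i,0)\) to \((j,0)\), and apply the Lindström--Gessel--Viennot lemma to the \(N+1\) sources and sinks. There is a unique nonintersecting family, in which the \(i\)-th path rises straight to height \(i\), stays there and descends. In that family the edge \(\{r-1,r\}\) is crossed (one up, one down) by exactly the paths indexed \(i\ge r\), that is, by \(N+1-r\) paths. Each such crossing contributes \(\alpha_{r-1}\beta_{r-1}=\Lambda_r\).

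The second route is algebraic. Let \(p_k\) be the monic orthogonal polynomials of the three-term recurrence
\[
   xp_k=p_{k+1}+\mathfrak b_kp_k+\Lambda_kp_{k-1}.
\]
The functional \(\mathcal L(x^n)=W_n\) satisfies \(\mathcal L(p_k^2)=\Lambda_1\cdots\Lambda_k\). An \(LDL^{T}\) (unitriangular) factorisation of the Hankel matrix then gives
\[
   \Delta_N=\prod_{k=0}^N\mathcal L(p_k^2).
\]

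The only real point is making the identity hold without assuming \(\Lambda_h\neq0\). The cleanest way is the LGV route, which is purely combinatorial and valid for arbitrary weights, including signed and zero ones. With LGV, the step I expect to require care is the claim that the nonintersecting family is unique. The first path is forced to be the trivial height-\(0\) path. Inductively, the path from \(-i\) must stay strictly above the previous one, which forces it to start with \(i\) up-steps and end with \(i\) down-steps, staying at height \(i\) in between. The level-step contributions cancel because the nonintersecting paths must be straight.
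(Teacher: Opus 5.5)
Your proposal is correct and matches the paper's argument: the paper quotes the classical Hankel formula \(\det(W_{i+j})_{i,j=0}^{N}=\prod_{r=1}^{N}\Lambda_r^{N+1-r}\) for Jacobi moments and substitutes \(\Lambda_r=(A(r-1)+\alpha_0)(C(r-1)+\beta_0)\), without re-proving the classical identity. One small repair to your optional LGV sketch: the Motzkin step graph is not planar (an up-step and a down-step can cross without sharing a vertex), so ``staying strictly above'' does not by itself give uniqueness or \(\sigma=\mathrm{id}\); argue instead by height counting on each vertical line \(x=\mp k\), where the surviving paths need distinct heights and the one from \((-i,0)\), respectively to \((j,0)\), has height at most \(i-k\), respectively \(j-k\). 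This forces the path from \((-i,0)\) to consist of \(i\) up-steps immediately followed by \(i\) down-steps, with no level portion.
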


\begin{proof}
For a Jacobi moment sequence with edge products \(\Lambda_r\), the
classical formula is
\[
   \det\bigl(W_{i+j}\bigr)_{i,j=0}^{N}
   =\prod_{r=1}^{N}\Lambda_r^{\,N+1-r}.
\]
Substitution of \eqref{eq:affine-jacobi-products} gives
\eqref{eq:general-affine-hankel}.
\end{proof}

A zero edge product therefore forces the corresponding Hankel determinants
to vanish.  The same Jacobi recurrence gives a finite polynomial expression
for the reciprocal ordinary series at every fixed terminal height.  For
generic affine parameters, the associated tails telescope further to one
shifted \({}_2F_1\) numerator at each height; both formulas are recorded in
Appendix~\ref{app:continued-fraction-details}.

At integer boundary index, the shifted recurrence of
Section~\ref{sec:integer-shift} has coupling across the top of the virtual
strip
\[
   \widetilde\Lambda_m
   =\widetilde\beta_{m-1}\widetilde\alpha_{m-1}
   =mC(\alpha_0-A).
\]
Thus the closed-bridge condition is exactly the point at which the
associated continued-fraction tail decouples from the visible recurrence.
The finite transfer formula through the virtual strip is given in
Appendix~\ref{app:continued-fraction-details}.

\subsection{A boundary-perturbed birth--death model}

Let \(p_k(t)\) be the state probabilities of a continuous-time birth--death
chain on \(\mathbb Z_{\ge0}\), with birth rates \(\lambda_k\), death rates
\(\mu_k\), and \(\mu_0=0\).  Assume first that the chain starts from state
\(0\), so that \(p_k(0)=\delta_{k0}\).  Its forward equations are
\begin{equation}
\label{eq:bd-forward}
   p'_k(t)
   =\lambda_{k-1}p_{k-1}(t)+\mu_{k+1}p_{k+1}(t)
    -(\lambda_k+\mu_k)p_k(t),
   \qquad k\ge0,
\end{equation}
where missing boundary terms are omitted.  If
\(p_k(t)=\sum_{n\ge0}c_{n,k}t^n/n!\), then the coefficients \(c_{n,k}\)
satisfy the Motzkin recurrence with signed weights
\begin{equation}
\label{eq:bd-path-weights}
   \alpha_k=\lambda_k,
   \qquad
   \beta_k=\mu_{k+1},
   \qquad
   \gamma_k=-(\lambda_k+\mu_k).
\end{equation}
With the stated initial condition, the coefficients \(c_{n,k}\) are
exactly the signed weighted-path totals associated with
\eqref{eq:bd-path-weights}.  For a general initial distribution they are the
corresponding linear combination of triangles starting at the possible
initial states.  The negative level weight is the diagonal term of the
Markov generator, not an additional positive colour.  This is the
weighted-path form of the Karlin--McGregor correspondence
\cite{KarlinMcGregor1957,ChiharaIsmail1982,FlajoletGuillemin2000}.

Consider now
\[
   \lambda_k=\lambda,
   \qquad
   \mu_k=\mu+\theta k\quad(k\ge1),
   \qquad
   \mu_0=0,
\]
with \(\lambda,\mu,\theta>0\), and suppose that the chain starts empty.
The interior coefficients are affine, but the reflecting floor creates one
exception:
\[
   \gamma_0=-\lambda,
   \qquad
   \gamma_k=-(\lambda+\mu+\theta k)\quad(k\ge1).
\]
Thus this queue is a one-boundary perturbation of an affine weighted-path
model, rather than a literal member of the six-parameter family at every
height.

Let
\[
   \mathcal G(x,t):=\sum_{k\ge0}p_k(t)x^k.
\]
Summing \eqref{eq:bd-forward} gives
\begin{equation}
\label{eq:bd-pgf}
   \mathcal G_t
   =\theta(1-x)\mathcal G_x
    +\lambda(x-1)\mathcal G
    +\mu(1-x)
      \frac{\mathcal G(x,t)-\mathcal G(0,t)}{x}.
\end{equation}
Equivalently,
\begin{align*}
   \mathcal G_t
   &={}
   \theta(1-x)\mathcal G_x
   +(\lambda x-\lambda-\mu)\mathcal G
   +\mu\frac{\mathcal G(x,t)-\mathcal G(0,t)}{x}
   +\mu\mathcal G(0,t).
\end{align*}
The first three terms are an affine positive-index equation with
\[
   A=0,
   \quad B=-\theta,
   \quad C=\theta,
   \quad \alpha_0=\lambda,
   \quad \gamma_0=-(\lambda+\mu),
   \quad \delta=\mu;
\]
the final term is the single correction at the floor.  Moreover,
\[
   \beta_k=\mu_{k+1}
   =\theta\left(k+1+\frac{\mu}{\theta}\right),
\]
so the boundary index of the affine interior rule is
\[
   \nu=\frac{\mu}{\theta}.
\]
This gives a direct probabilistic reason for treating \(\nu\) as a
continuous parameter: it is a ratio of transition rates and need not be an
integer.

The characteristic-and-Volterra analysis of this queue goes back to
Zheng, Chao, and Ji \cite{ZhengChaoJi2004}; that analytic method is not
claimed as new here.  Its role in the present paper is to exhibit the
boundary index as a natural rate ratio in a one-floor perturbation of the
affine path equation.  Applying the same boundary-flow calculation as in
Section~\ref{sec:continuous-kernel} recovers their Kummer quotient for the
empty-state probability.  If
\[
   a=\frac{\lambda}{\theta},
   \qquad
   \nu=\frac{\mu}{\theta},
   \qquad
   \widehat p_0(s)=\int_0^\infty e^{-st}p_0(t)\,dt,
\]
then, for \(\Re s>0\),
\begin{equation}
\label{eq:kummer-transform}
   \widehat p_0(s)
   =
   \frac{1}{s}
   \frac{
      {}_1F_1\!\bigl(\nu+1;\nu+1+s/\theta;-a\bigr)
   }{
      {}_1F_1\!\bigl(\nu;\nu+1+s/\theta;-a\bigr)
   }.
\end{equation}
A derivation, including the effect of the floor perturbation, is given in
Appendix~\ref{app:kummer-bd}.

These two viewpoints clarify different aspects of the preceding theory.
The Jacobi data show exactly what survives when the full oriented triangle
is reduced to its return column: the level weights and the products of
opposite edge weights remain, while their orientation is lost.  The birth--death model shows that the boundary index arises naturally as
the continuous rate ratio \(\mu/\theta\), while a literal finite shifted
floor is available only at integer values.


\section{Conclusion}
\label{sec:conclusion}

For affine weighted Motzkin paths with positive boundary index, the
height-refined generating-function equation contains one unknown scalar
series, namely the return series \(W(t)\).  The characteristic identity of
Section~\ref{sec:continuous-kernel} resolves both parts of this boundary
problem.  At the boundary it gives a one-dimensional Abel--Volterra
equation which determines \(W\) from the affine step weights; at an interior
point the same identity reconstructs the full generating function
\(w(x,t)\).  Thus the return column is neither external data nor a separate
enumerative input: it is selected by the affine path rule and, once
selected, determines every terminal-height column.  The term
\emph{differential kernel method} refers precisely to the boundary
cancellation which produces this return equation.

When the boundary index is a positive integer \(m\), the same mechanism has
a finite combinatorial realisation.  Shifting every height by \(m\)
replaces the divided-difference evolution by a local auxiliary evolution on
a half-line with \(m\) virtual levels.  The first-entry decomposition of
Section~\ref{sec:integer-shift} expresses the original triangle through two
local shifted path triangles; at the boundary it recovers the return
equation, and after \(m\) differentiations it gives a Volterra equation of
the second kind.  The only bridge from the virtual strip back to the
visible region has weight \(\alpha_0-A\).  Consequently, intermediate
deletions can be postponed until the final row if and only if
\(\alpha_0=A\).

Terminal-height factorisation is rigid even without an affine hypothesis:
if \(H_k=WY^k\) for every height and \(H_1\not\equiv0\), then the full
step rule is forced to be arrival-proportional affine.  Within the affine
family this classification reduces to \(\alpha_0=A\).  The same algebraic
condition has two combinatorial consequences: for arbitrary boundary
parameter it gives terminal-height factorisation, while at positive integer
boundary index it additionally closes the return bridge from the virtual
strip.  The secant-power family is a transparent instance of this general
structure.

The final section clarifies what is retained under two standard external
readings.  The return sequence depends on the oriented edge weights only
through the Jacobi products \(\alpha_h\beta_h\), together with the level
weights \(\gamma_h\); under the quasi-definite Hankel condition these data
are also uniquely recoverable from the returns.  The full height-refined
triangle retains the missing orientation data.  The boundary-perturbed birth--death model gives a
different interpretation of the boundary index: it arises naturally as a
continuous ratio of transition rates, while a literal finite shifted floor
is available only at integer values.

Several boundary regimes have deliberately been left outside the main line.
The balanced case \(\beta_0=C\) is local and belongs to the separate local
theory.  Negative index, \(\beta_0<C\), is not covered by the positive Abel
kernel used here and would require a boundary analysis of Frobenius type.
The normalisation by the boundary index does not apply when \(C=0\); for the
arrival-proportional subfamily \(\alpha_0=A\),
Section~\ref{sec:collapse} still gives closed forms, but outside it the case
\(C=0\) is a separate regime.  In the constant-weight Motzkin case
\(A=B=C=0\), for instance, the return series is
\[
   W(t)=e^{\gamma_0t}
   \frac{I_1\!\bigl(2\sqrt{\alpha_0\beta_0}\,t\bigr)}
        {\sqrt{\alpha_0\beta_0}\,t},
\]
with the usual limiting interpretation when \(\alpha_0\beta_0=0\).  More
general perturbations at the floor are a natural further direction.

%
\section*{Acknowledgements}

I am deeply grateful to Cyril Banderier for many hours of detailed and
generous discussion of this material, and above all for his advice on how
to present it to a lattice-path audience.  I also thank the participants of the Lattice Paths
Combinatorics conference (Vienna, July 2026) for their questions and
comments.


\appendix


\section{Analytic and special-function details for the return equation}
\label{app:continuous-kernel-details}

This appendix supplies the analytic details used in
Section~\ref{sec:continuous-kernel} and records two optional consequences of
the one-dimensional return equation: an ordinary power-series quotient and,
for generic affine parameters, a hypergeometric evaluation.

\begin{lemma}[Analyticity of the exponential generating function]
\label{lem:app-egf-analytic}
Let the six affine parameters be complex numbers and put
\[
   M=\max(1,|A|,|B|,|C|,|\alpha_0|,|\beta_0|,|\gamma_0|).
\]
Then
\[
   |w_{n,k}|
   \le 3^nM^n(n+1)^n
   \le e\,n!\,(3Me)^n.
\]
Consequently, for every \(R>0\), the series
\eqref{eq:motzkin-egf} converges absolutely for
\[
   |t|<\frac{1}{3Me(1+R)},
   \qquad
   |x|\le R.
\]
In particular, \(w(x,t)\) is analytic near \(t=0\), locally uniformly in
\(x\), and \(W(t)=w(0,t)\) is analytic near the origin.
\end{lemma}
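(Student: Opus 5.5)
The bound will come from a crude path count, with each step weight bounded by its height. Any path of length \(n\) stays at heights at most \(n\), and every step taken from height \(h\le n-1\) (or arriving at such a height) has weight of absolute value at most
\[
   |A|h+|\alpha_0|,\quad |B|h+|\gamma_0|,\quad |C|h+|\beta_0|\;\le\; M(h+1)\le M(n+1)
\]
in each of the three cases. (For a down-step arriving at height \(h\), the departure height is \(h+1\le n\), so the arrival height is at most \(n-1\).) So every path of length \(n\) has weight of modulus at most \(M^n(n+1)^n\).

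Next I bound the number of paths. There are at most \(3^n\) step sequences of length \(n\), hence at most \(3^n\) Motzkin paths of length \(n\), whatever their endpoint. Summing the per-path bound over these gives \(|w_{n,k}|\le 3^nM^n(n+1)^n\). Alternatively, one can run an induction on \(n\) through the recurrence \eqref{eq:motzkin-rec}, with the hypothesis \(|w_{n,k}|\le 3^nM^n(n+1)^n\). The recurrence has three terms, each coefficient is at most \(M(n+1)\le M(n+2)\), and \((n+1)^n\le(n+2)^n\), so the step closes. I would present the direct path count since it is cleaner.

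The second inequality is elementary. Write
\[
   (n+1)^n=n^n(1+1/n)^n\le e\,n^n,
\]
and use \(n^n\le n!\,e^n\), which follows from the term \(n^n/n!\) of the series for \(e^n\). Together these give \((n+1)^n\le e\,n!\,e^n\), with the case \(n=0\) checked separately.

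For convergence, I use that \(w_{n,k}=0\) for \(k>n\). Then, for \(|x|\le R\),
\[
   \sum_k |w_{n,k}||x|^k\frac{|t|^n}{n!}\le (n+1)\,e\,(3Me)^n\max(1,R)^n|t|^n\le e\,\bigl(3Me(1+R)|t|\bigr)^n(n+1),
\]
and the right-hand side is summable precisely when \(|t|<1/(3Me(1+R))\). Each row \(P_n\) is a polynomial, so uniform absolute convergence on the polydisc gives analyticity in \((x,t)\) by the Weierstrass theorem. Specialising to \(x=0\) gives analyticity of \(W\).

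The only point needing care is making sure the per-step bound \(M(n+1)\) is valid for all three step types at every reachable height; this is the reason for using heights up to \(n\) rather than \(n-1\). Beyond that there is no real obstacle.
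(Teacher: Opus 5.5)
Your proposal is correct and follows the paper's own proof essentially step for step. The paper's ingredients are the same as yours: the per-step bound \(M(n+1)\) because heights never exceed \(n\), at most \(3^n\) step sequences, the inequality \((n+1)^n\le e^{n+1}n!\), and the majorant \(|P_n(x)|\le e(n+1)n!\bigl(3Me(1+R)\bigr)^n\) for \(|x|\le R\).
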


\begin{proof}
A path of length \(n\) never rises above height \(n\).  Hence every step
used by such a path has absolute weight at most \(M(n+1)\), and there are at
most \(3^n\) step sequences.  This gives the first estimate.  The inequality
\((n+1)^n\le e^{n+1}n!\) gives the second.

For \(|x|\le R\),
\[
   |P_n(x)|
   \le
   (n+1)\max_k|w_{n,k}|\max(1,R)^n
   \le
   e(n+1)n!\bigl(3Me(1+R)\bigr)^n.
\]
The asserted convergence and termwise differentiation follow.
\end{proof}

Lemma~\ref{lem:app-egf-analytic}, together with
\eqref{eq:ck-FL-leading}, justifies the boundary limit used in the derivation
of \eqref{eq:ck-boundary-flow-identity}: as \(\tau\downarrow0\),
\[
   F_\nu(\tau)w(X(\tau),T-\tau)\longrightarrow0.
\]
The remaining analytic point is uniqueness for the weakly singular
convolution equation.

\begin{lemma}[Standard Abel regularisation]
\label{lem:app-abel-regularisation}
Let
\[
   L(t)=t^{\nu-1}k(t),
   \qquad
   \nu>0,
   \qquad
   k(0)\ne0,
\]
with \(k\) analytic near \(0\).  Suppose that \(U\) is analytic near
\(0\) and satisfies
\[
   H=L*U,
   \qquad
   (L*U)(t)=\int_0^tL(t-s)U(s)\,ds.
\]
Put
\[
   q=\lceil\nu\rceil,
   \qquad
   \theta=q-\nu\in[0,1),
\]
and let \(I^\theta\) denote the Riemann--Liouville fractional integral,
with \(I^0\) the identity.  If \(J=I^\theta L\), then
\[
   J(t)=t^{q-1}j(t),
   \qquad
   j(0)=\frac{\Gamma(\nu)}{\Gamma(q)}k(0),
\]
and
\begin{equation}
\label{eq:app-regularised-second-kind}
   (I^\theta H)^{(q)}(t)
   =
   \Gamma(\nu)k(0)U(t)
   +\int_0^tJ^{(q)}(t-s)U(s)\,ds.
\end{equation}
Thus every analytic solution of the first-kind equation \(H=L*U\)
satisfies a Volterra equation of the second kind with analytic kernel.  In
particular, the first-kind equation has at most one analytic solution near
\(0\), equivalently at most one formal Taylor-series solution.  If
\(\nu\in\mathbb Z_{>0}\), then \(\theta=0\) and the conversion is ordinary
\(\nu\)-fold differentiation.
\end{lemma}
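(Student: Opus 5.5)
The plan is to apply the fractional integral \(I^\theta\) first, which turns the kernel into one with an integer-order zero, and then differentiate \(q\) times. Recall
\[
(I^\theta f)(t)=\frac{1}{\Gamma(\theta)}\int_0^t(t-s)^{\theta-1}f(s)\,ds \qquad (\theta>0).
\]
The fractional integral commutes with convolution, since \(I^\theta f=\kappa_\theta*f\) with \(\kappa_\theta(t)=t^{\theta-1}/\Gamma(\theta)\) and convolution is associative and commutative. Hence
\[
I^\theta H=(I^\theta L)*U=J*U .
\]

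The first step is to compute \(J\). Expand \(k(t)=\sum_{i\ge0}k_it^i\). Termwise, the Beta integral gives
\[
I^\theta t^{\nu-1+i}=\frac{\Gamma(\nu+i)}{\Gamma(\nu+i+\theta)}\,t^{\nu-1+i+\theta}=\frac{\Gamma(\nu+i)}{\Gamma(q+i)}\,t^{q-1+i}.
\]
Therefore \(J(t)=t^{q-1}j(t)\), where \(j(t)=\sum_i k_i\,\Gamma(\nu+i)\Gamma(q+i)^{-1}t^i\). This series converges on the same disc as \(k\), because the ratio \(\Gamma(\nu+i)/\Gamma(q+i)\) is bounded by \(\Gamma(\nu)/\Gamma(q)\) times something at most polynomial. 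In particular \(j(0)=\Gamma(\nu)k(0)/\Gamma(q)\), and \(J\) is analytic near \(0\). When \(\theta=0\) nothing needs to be done: \(J=L\) and \(q=\nu\).

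The second step is to differentiate \(J*U\) a total of \(q\) times. Since \(J\) vanishes to order \(q-1\), we have \(J^{(r)}(0)=0\) for \(r\le q-2\) and \(J^{(q-1)}(0)=(q-1)!\,j(0)=\Gamma(\nu)k(0)\). Repeated Leibniz differentiation under the integral gives
\[
(J*U)^{(r)}(t)=\sum_{i<r}J^{(i)}(0)\,U^{(r-1-i)}(t)+\int_0^t J^{(r)}(t-s)U(s)\,ds .
\]
All boundary terms vanish for \(r\le q-1\). At \(r=q\) the only surviving boundary term is \(J^{(q-1)}(0)U(t)=\Gamma(\nu)k(0)U(t)\). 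This yields \eqref{eq:app-regularised-second-kind}, and for integer \(\nu\) it is literally \(\nu\)-fold differentiation.

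The third step is uniqueness. If \(U_1,U_2\) are two analytic solutions, their difference \(V\) satisfies
\[
0=\Gamma(\nu)k(0)V+\int_0^tJ^{(q)}(t-s)V(s)\,ds ,
\]
where the kernel \(J^{(q)}\) is continuous. With \(c=\Gamma(\nu)|k(0)|>0\) and \(|J^{(q)}|\le K\), we get \(|V(t)|\le (K/c)\int_0^t|V|\), so Gronwall's inequality forces \(V\equiv0\). The formal-series version works the same way: comparing coefficients of \(t^N\) gives \(\Gamma(\nu)k(0)V_N=-\sum(\text{terms in }V_{<N})\), so all \(V_N\) vanish by induction.

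I expect the main obstacle to be justifying the commutation of \(I^\theta\) with \(L*\cdot\) and the termwise fractional integration of \(L\). Both rest on absolute integrability via Fubini, which is available because \(\nu>0\) and \(\theta\ge0\) make every kernel locally integrable.
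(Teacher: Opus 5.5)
Your proposal is correct and follows the paper's proof. Both compute \(J=I^\theta L=t^{q-1}j(t)\) by termwise fractional integration, then differentiate \(J*U\) exactly \(q\) times so that only the boundary term \(J^{(q-1)}(0)U(t)=\Gamma(\nu)k(0)U(t)\) survives. You also spell out two steps the paper leaves to the reader or to references. First, you state \(I^\theta(L*U)=(I^\theta L)*U\), which the paper uses without comment. Second, where the paper simply cites classical results for uniqueness of the second-kind equation, you give a direct Gronwall argument and a coefficient induction.
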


\begin{proof}
Write \(k(t)=\sum_{r\ge0}k_rt^r\).  Termwise fractional integration gives
\[
   I^{q-\nu}\bigl(t^{\nu-1+r}\bigr)
   =
   \frac{\Gamma(\nu+r)}{\Gamma(q+r)}t^{q-1+r}.
\]
Hence \(J=I^{q-\nu}L=t^{q-1}j(t)\), with
\(j(0)=\Gamma(\nu)k(0)/\Gamma(q)\).  Differentiating \(J*U\) exactly
\(q\) times gives \eqref{eq:app-regularised-second-kind}: all lower
boundary terms vanish, and the only surviving boundary term is
\(\Gamma(\nu)k(0)U(t)\).  Uniqueness for the resulting second-kind
Volterra equation is classical
\cite{GorenfloVessella1991,Brunner2004}.
\end{proof}

For the affine Motzkin kernel,
\[
   L_\nu(t)
   =t^{\nu-1}\bigl(\nu C^\nu+O(t)\bigr).
\]
The path generating function supplies existence of a solution in the
present problem, while Lemma~\ref{lem:app-abel-regularisation} proves the
uniqueness asserted in
Theorem~\ref{thm:boundary-selection-combinatorial}.  At integer index
\(\nu=m\), the same lemma reduces to ordinary \(m\)-fold differentiation,
as used in Section~\ref{sec:integer-shift}.

For completeness, we now record the ordinary-series form of the coefficient
recursion.  With
\[
   F_\nu(t)=t^\nu\sum_{N\ge0}h_Nt^N,
   \qquad
   L_\nu(t)=t^{\nu-1}\sum_{j\ge0}\ell_jt^j,
\]
put
\[
   \widetilde h_N=\Gamma(\nu+N+1)h_N,
   \qquad
   \widetilde\ell_j=\Gamma(\nu+j)\ell_j.
\]
Then Corollary~\ref{cor:ck-beta-recursion} is equivalent to
\[
   \widetilde h_N
   =\sum_{j=0}^{N}\widetilde\ell_jW_{N-j}.
\]
Consequently, for the formal ordinary generating series
\(G(z)=\sum_{n\ge0}W_nz^n\), one has
\begin{equation}
\label{eq:ck-series-quotient}
   G(z)
   =
   \frac{\sum_{N\ge0}\widetilde h_Nz^N}
        {\sum_{j\ge0}\widetilde\ell_jz^j}.
\end{equation}
Thus the fractional power at the origin is absorbed by gamma factors, while
the return enumerator is a formal quotient of power series.

\begin{proposition}[Exact formal ordinary generating series for returns]
Assume in addition that \(A\ne0\) and \(B^2-4AC\ne0\).  Choose
\(\kappa\) with
\[
   \kappa^2=B^2-4AC,
\]
and put
\[
   a=\frac{\alpha_0}{A},
   \qquad
   b=\frac{\beta_0}{C}=\nu+1,
   \qquad
   r=\frac{B+\kappa}{2\kappa},
\]
\[
   c(s)=\frac{s-\gamma_0}{\kappa}+(a+b-1)r.
\]
Define the formal reciprocal Laurent series
\[
   \mathcal R(s)
   :=\frac1sG\left(\frac1s\right)
   =\sum_{n\ge0}\frac{W_n}{s^{n+1}}.
\]
Then, as a formal Laurent series at \(s=\infty\),
\begin{equation}
\label{eq:ck-hypergeometric-return-series}
   \mathcal R(s)
   =
   \frac{1}{\kappa c(s)}
   \frac{
      {}_2F_1\!\left(a,b;c(s)+1;r\right)
   }{
      {}_2F_1\!\left(a-1,b-1;c(s);r\right)
   }.
\end{equation}
The formal branch is the one with \(\mathcal R(s)\sim s^{-1}\).  With this
convention, the Laurent series is independent of the sign chosen for
\(\kappa\).
\end{proposition}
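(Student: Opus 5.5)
The plan is to evaluate both sides of the gamma-normalised quotient \eqref{eq:ck-series-quotient} in closed form, by computing the coefficients \(h_N\) and \(\ell_j\) of \(F_\nu\) and \(L_\nu\) explicitly, and then to convert the resulting ordinary series into the Laurent variable \(s=1/z\).

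\textbf{Step 1: the characteristic flow.} With \(A\neq0\) and \(\kappa\neq0\), factor \(Q(x)=A(x-x_+)(x-x_-)\) with distinct roots \(x_\pm=(-B\pm\kappa)/(2A)\). Solving \(X'=Q(X)\), \(X(0)=0\), gives \(X\) as a Möbius function of \(e^{\kappa\tau}\). Concretely,
\[
X(\tau)=\frac{x_+x_-(1-e^{\kappa\tau})}{x_--x_+e^{\kappa\tau}}.
\]

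\textbf{Step 2: the integrating factor.} Next I compute \(F_\nu\) in closed form. The integral \(\int_0^\tau X\) is a logarithm, since \(AX=X'/(X-x_+)-\ldots\) by partial fractions of \(1/Q\). So \(F_\nu(\tau)\) becomes a product of three factors:
\begin{itemize}
\item the power \(X^\nu\),
\item an exponential \(e^{\lambda\tau}\) with \(\lambda\) affine in \(\gamma_0\),
\item a power of \(1-re^{\kappa\tau}\)-type factors, with exponent built from \(a-\nu-1\).
\end{itemize}
Writing \(u=1-e^{-\kappa\tau}\) (or a similar variable), \(F_\nu\) should take the form \(\mathrm{const}\cdot u^\nu e^{\mu\tau}(1-r u)^{-(a+b-1)}\) up to the correct exponents. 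The factor \(L_\nu=\delta F_\nu/X\) has the same structure with \(\nu\) lowered by one.

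\textbf{Step 3: replace the Taylor quotient by a Laplace-transform quotient.} The gamma normalisation \(\widetilde h_N=\Gamma(\nu+N+1)h_N\) is exactly what the Laplace transform produces: \(\int_0^\infty e^{-s\tau}\tau^{\nu+N}\,d\tau=\Gamma(\nu+N+1)s^{-\nu-N-1}\). Applied formally and termwise (Watson's lemma viewpoint),
\[
\widehat{F_\nu}(s)\sim s^{-\nu-1}\sum_N\widetilde h_N s^{-N},\qquad \widehat{L_\nu}(s)\sim s^{-\nu}\sum_j\widetilde\ell_j s^{-j}.
\]
Hence \(\mathcal R(s)=\tfrac1s G(1/s)\) equals \(\widehat{F_\nu}(s)/\widehat{L_\nu}(s)\) as a formal asymptotic series. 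This matches the Laplace transform of the convolution equation \eqref{eq:ck-volterra-final}: \(\widehat F=\widehat L\,\widehat W\), and the Laplace transform of the EGF \(W\) is \(\sum W_n s^{-n-1}=\mathcal R(s)\).

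\textbf{Step 4: Euler integrals for the two transforms.} Substitute \(y=e^{-\kappa\tau}\) (taking \(\Re\kappa>0\) and suitable parameter ranges) to turn each transform into an Euler integral
\[
\int_0^1 y^{c-1}(1-y)^{b-1}(1-ry)^{-a}\,dy ,
\]
which is a beta factor times \({}_2F_1\). The ratio of the two beta factors gives the prefactor \(1/(\kappa c(s))\). The parameter shifts come from \(L_\nu\) carrying one fewer power of \(X\) and a different power of the linear factor, which produces \((a-1,b-1;c)\) against \((a,b;c+1)\); the value of \(c(s)\) is read off from the exponent of \(y\).

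\textbf{Step 5: extend to formal identities.} First prove the identity on an open parameter domain with real positive data and large \(s\). Then extend by analytic continuation in the parameters, with both sides read as asymptotic Laurent series in \(1/s\). Independence of the sign of \(\kappa\) follows because the left side \(\mathcal R(s)\) does not involve \(\kappa\); alternatively, it follows from Pfaff/Euler transformations sending \(r\mapsto 1-r\).

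I expect the main obstacle to be the bookkeeping in Steps 2–4: identifying the exact exponents so that the Euler integrals land on \({}_2F_1(a,b;c+1;r)\) over \({}_2F_1(a-1,b-1;c;r)\), rather than on a contiguous or Pfaff-transformed variant. I would first try \(y=e^{-\kappa\tau}\) with \(r=(B+\kappa)/(2\kappa)\) as the point where \(1-ry\) vanishes, adjusting by a Pfaff transformation if the naive parameters come out permuted. A secondary delicacy is justifying the termwise Laplace/Watson correspondence as a formal identity. Since both sides are formal series determined by the Taylor coefficients, I would verify it coefficientwise via the gamma normalisation rather than through convergence.
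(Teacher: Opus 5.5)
Your plan follows the paper's proof. The gamma normalisation is the formal Laplace--Borel transform, which turns \(F_\nu=L_\nu*W\) into \(\mathcal R=\mathcal L_{\!B}[F_\nu]/\mathcal L_{\!B}[L_\nu]\). Both transforms are then computed by Euler's integral from closed forms of \(F_\nu\) and \(L_\nu\), and the beta ratio gives \(1/(\kappa c(s))\). The identity is continued in the parameters, and sign-independence holds because the left side does not involve \(\kappa\).

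One step fails as you specified it: you cannot anchor the analytic calculation on real positive data. Since \(A\ne0\), positive data means \(A>0\). Then \(X'\ge AX^2+C\) gives \(X(\tau)\ge\sqrt{C/A}\,\tan(\sqrt{AC}\,\tau)\), so the characteristic blows up at a finite \(\tau\). The transforms reflect this in two ways:
\begin{itemize}
\item If \(B^2>4AC\) and \(\kappa>0\), then \(\kappa<B\) and \(r>1\). So \(1-rz\) vanishes inside \((0,1)\), the factor \((1-rz)^{-a}\) of \(F_\nu\) (with \(a=\alpha_0/A>0\)) is singular there, and \(r\) lies on the cut of the \({}_2F_1\)'s.
\item If \(B^2<4AC\), then \(\kappa\) is purely imaginary and \(\Re\kappa>0\) cannot be arranged.
\end{itemize}
In both cases the Laplace integrals over \((0,\infty)\) and the Euler-integral representations break down. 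The anchoring domain must keep the characteristic bounded. The paper takes \(A<0<C\) and \(\kappa>0\), so that \(\kappa>|B|\), \(r\in(0,1)\), \(X\) stays bounded, and both transforms converge for large \(\Re s\). Since the \(1/s\)-coefficients of both sides are analytic in the parameters, the identity then extends.

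The rest is bookkeeping, but your guesses for it are off. Work with \(z=1-e^{-\kappa\tau}\) rather than \(y=e^{-\kappa\tau}\). Then \(X=(C/\kappa)\,z/(1-rz)\), and integrating the exponent (using \(r(1-r)=-AC/\kappa^2\) and \(B/\kappa=2r-1\)) gives
\[
   F_\nu=\Bigl(\frac{C}{\kappa}\Bigr)^{b-1}z^{b-1}(1-z)^{-K}(1-rz)^{-a},
   \qquad
   K=\frac{\gamma_0}{\kappa}+(b-1)(1-r)-ar.
\]
So the exponent on \(1-rz\) is \(-a\), not \(-(a+b-1)\), and \(L_\nu\) carries \(z^{b-2}\) and \((1-rz)^{1-a}\). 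With \(e^{-s\tau}=(1-z)^{s/\kappa}\) and \(d\tau=dz/(\kappa(1-z))\), Euler's integral directly produces, up to constants, \(\mathrm B(b,c(s)+1-b)\,{}_2F_1(a,b;c(s)+1;r)\) and \(\mathrm B(b-1,c(s)+1-b)\,{}_2F_1(a-1,b-1;c(s);r)\), because \(b+s/\kappa-K=c(s)+1\). In your variable \(y\) the linear factor is \(1-r+ry\), not \(1-ry\). You would therefore get argument \(r/(r-1)\) and need exactly the Pfaff transformation you anticipated to return to \(r\).
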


\begin{proof}
For a generalized Taylor series
\[
   f(t)=t^\lambda\sum_{j\ge0}f_jt^j,
   \qquad \lambda>-1,
\]
define its formal Laplace--Borel transform by
\[
   \mathcal L_{\!B}[f](s)
   :=
   \sum_{j\ge0}
      f_j\Gamma(\lambda+j+1)s^{-\lambda-j-1}.
\]
In particular,
\[
   \mathcal L_{\!B}[W](s)
   =\sum_{n\ge0}\frac{W_n}{s^{n+1}}
   =\mathcal R(s).
\]
The beta identity for convolution coefficients gives
\(
   \mathcal L_{\!B}[f*g]
   =\mathcal L_{\!B}[f]\mathcal L_{\!B}[g]
\).
Thus \(F_\nu=L_\nu*W\) implies
\begin{equation}
\label{eq:app-transform-quotient}
   \mathcal R(s)
   =
   \frac{\mathcal L_{\!B}[F_\nu](s)}
        {\mathcal L_{\!B}[L_\nu](s)}.
\end{equation}

Put
\[
   z=1-e^{-\kappa t}.
\]
The solution of \(X'=AX^2+BX+C\), \(X(0)=0\), is
\[
   X(t)=\frac{C}{\kappa}\frac{z}{1-rz}.
\]
Set
\[
   K
   =\frac{\gamma_0}{\kappa}
    +(b-1)(1-r)-ar.
\]
A direct integration of the exponent in \eqref{eq:ck-F-def} gives
\[
   F_\nu(t)
   =
   \left(\frac{C}{\kappa}\right)^{b-1}
   z^{b-1}(1-z)^{-K}(1-rz)^{-a},
\]
and
\[
   L_\nu(t)
   =
   (b-1)\kappa
   \left(\frac{C}{\kappa}\right)^{b-1}
   z^{b-2}(1-z)^{-K}(1-rz)^{1-a}.
\]

To justify the Euler-integral calculation, first take a parameter domain
with \(A<0<C\), choose \(\kappa>0\), and take \(s\) with sufficiently
large real part.  Then the ordinary Laplace integrals converge and establish
the identity analytically.  Both sides are meromorphic functions of the
generic parameters, so the identity extends meromorphically wherever both
sides are defined.  Independently of this global continuation, their
asymptotic expansions at \(s=\infty\) give the stated coefficientwise
formal Laurent-series identity.  Since
\[
   dt=\frac{dz}{\kappa(1-z)},
   \qquad
   e^{-st}=(1-z)^{s/\kappa},
\]
Euler's integral gives the required transforms.  With
\[
   c(s)=\frac{s-\gamma_0}{\kappa}+(a+b-1)r,
\]
one obtains
\begin{align*}
\mathcal L_{\!B}[F_\nu](s)
&=
\frac1\kappa
\left(\frac C\kappa\right)^{b-1}
B\bigl(b,c(s)-b+1\bigr)
{}_2F_1\!\left(a,b;c(s)+1;r\right),\\
\mathcal L_{\!B}[L_\nu](s)
&=
(b-1)
\left(\frac C\kappa\right)^{b-1}
B\bigl(b-1,c(s)-b+1\bigr)
{}_2F_1\!\left(a-1,b-1;c(s);r\right).
\end{align*}
Finally,
\[
   \frac{B(b,d)}{(b-1)B(b-1,d)}=\frac1{b+d-1},
\]
which reduces \eqref{eq:app-transform-quotient} to
\eqref{eq:ck-hypergeometric-return-series}.  Replacing \(\kappa\) by
\(-\kappa\) changes the intermediate parameters but not the Laurent series,
because both branches equal the uniquely determined series
\(\sum_{n\ge0}W_ns^{-n-1}\).
\end{proof}

The appearance of a quotient of contiguous Gauss functions is consistent
with the classical theory of associated Jacobi and Meixner--Pollaczek
recurrences; see, for example, \cite{Masson1988} and the references therein.
The point of the present derivation is different: the quotient is obtained
directly from the return equation \(F_\nu=L_\nu*W\), and the same boundary
identity also reconstructs every terminal height.

\begin{remark}[Computing the return sequence]
The direct recurrence \eqref{eq:motzkin-rec} fills a two-dimensional triangle
and uses \(\Theta(N^2)\) arithmetic operations to obtain the return weights
through length \(N\).  Once the numerator and denominator in
\eqref{eq:ck-series-quotient} have been computed through order \(N\), fast
power-series division recovers \(W_0,\ldots,W_N\) in \(O(M(N))\)
arithmetic operations, where \(M(N)\) is the cost of multiplying series of
length \(N\) \cite{GathenGerhard2013}.  This statement concerns arithmetic
complexity of the final division; it does not include the separate cost of
constructing the two auxiliary series, which are obtained from the quadratic
flow \(X'=Q(X)\) whose standard linearisation is recorded in
Appendix~\ref{app:riccati-collapse}.
\end{remark}


\section{Details for the integer shifted floor}
\label{app:integer-compression}

Throughout this appendix,
\[
   C>0,
   \qquad
   m\in\mathbb Z_{>0},
   \qquad
   \beta_0=(m+1)C,
   \qquad
   \delta=mC.
\]
We write
\[
   \mathcal P_m f
   =Q(x)f'(x)
    +\bigl((\alpha_0-mA)x+\gamma_0-mB\bigr)f(x).
\]

Put \(v(x,t)=x^mw(x,t)\).  Since
\[
   v_t=x^mw_t,
   \qquad
   x^mw_x=v_x-\frac{mv}{x},
\]
multiplication of \eqref{eq:motzkin-pde} by \(x^m\) gives
\begin{align*}
   v_t
   &={}Q(x)\left(v_x-\frac{mv}{x}\right)
      +(\alpha_0x+\gamma_0)v
      +mC\left(\frac{v}{x}-x^{m-1}W(t)\right).
\end{align*}
Using \(Q(x)=Ax^2+Bx+C\), the terms containing \(mCv/x\) cancel, and
we obtain
\[
   v_t=\mathcal P_mv-mCx^{m-1}W(t),
   \qquad
   v(x,0)=x^m.
\]

The free auxiliary functions \(a\) and \(b\) satisfy
\[
   a_t=\mathcal P_ma,
   \qquad a(x,0)=x^m,
\]
\[
   b_t=\mathcal P_mb,
   \qquad b(x,0)=x^{m-1}.
\]
The main text proves the first-entry identity combinatorially.  Its
variation-of-constants form
\[
   v(x,t)
   =a(x,t)-mC\int_0^t b(x,s)W(t-s)\,ds
\]
may also be checked directly by differentiating the right-hand side.  This
is the analytic Duhamel counterpart of Proposition~\ref{prop:integer-first-entry}.

The row-by-row truncation formula follows from the same equation.  Write
\[
   v(x,t)=\sum_{n\ge0}v_n(x)\frac{t^n}{n!}.
\]
Then
\[
   v_{n+1}=\mathcal P_mv_n-mCW_nx^{m-1}.
\]
Since \(v_n=x^mP_n\) has no terms below degree \(m\), the only term of
\(\mathcal P_mv_n\) below degree \(m\) is
\(mCW_nx^{m-1}\).  Therefore
\[
   v_{n+1}=[\mathcal P_mv_n]_{\ge m},
\]
which is \eqref{eq:integer-row-by-row-rule}.

We next evaluate the boundary values of the auxiliary local solutions.  Let
\(X\) be the function defined in Section~\ref{sec:continuous-kernel} by
\[
   X'(t)=Q(X(t)),
   \qquad
   X(0)=0.
\]
More generally, let \(u_r(x,t)\) solve
\[
   \partial_tu_r=\mathcal P_mu_r,
   \qquad
   u_r(x,0)=x^r.
\]
Fix \(t\), put \(y(s)=X(t-s)\), and evaluate \(u_r(y(s),s)\).  Since
\(y'=-Q(y)\),
\[
   \frac{d}{ds}u_r(y(s),s)
   =\bigl((\alpha_0-mA)y(s)+\gamma_0-mB\bigr)u_r(y(s),s).
\]
At \(s=0\), the value is \(X(t)^r\); at \(s=t\), it is \(u_r(0,t)\).
Hence
\[
   u_r(0,t)
   =X(t)^r
    \exp\left(
       \int_0^t
       \bigl[(\alpha_0-mA)X(u)+\gamma_0-mB\bigr]du
    \right).
\]
Taking \(r=m\) and \(r=m-1\), so that \(u_m=a\) and \(u_{m-1}=b\),
gives
\[
   F_m(t)=a(0,t),
   \qquad
   L_m(t)=mC\,b(0,t).
\]

Finally, we justify the second-kind equation.  The shortest auxiliary path
from height \(m-1\) to \(0\) consists of \(m-1\) down-steps and has weight
\(C^{m-1}(m-1)!\).  Therefore
\[
   b(0,t)=C^{m-1}t^{m-1}+O(t^m).
\]
Since \(m\) is an integer, \(L_m(t)=mC\,b(0,t)\) is analytic at the
origin.  Hence
\[
   L_m^{(q)}(0)=0\quad(0\le q\le m-2),
   \qquad
   L_m^{(m-1)}(0)=m!C^m.
\]
For analytic \(K\) and \(W\), repeated differentiation of the convolution
\[
   (K*W)(t):=\int_0^tK(t-s)W(s)\,ds
\]
gives
\[
   \frac{d^m}{dt^m}(K*W)(t)
   =\sum_{q=0}^{m-1}K^{(q)}(0)W^{(m-1-q)}(t)
    +\int_0^tK^{(m)}(t-s)W(s)\,ds.
\]
Applying this formula with \(K=L_m\), all endpoint terms vanish except the
one with \(q=m-1\).  Differentiating
\eqref{eq:integer-first-entry-convolution} exactly \(m\) times therefore
gives
\[
   \partial_t^ma(0,t)
   =m!C^mW(t)
    +mC\int_0^t\partial_t^mb(0,t-s)W(s)\,ds,
\]
and hence \eqref{eq:integer-second-kind-volterra}.

For the coefficient form, write
\[
   F_m(t)=\sum_{n\ge0}f_n\frac{t^n}{n!},
   \qquad
   L_m(t)=\sum_{n\ge0}\lambda_n\frac{t^n}{n!}.
\]
Here \(\lambda_{m-1}=m!C^m\).  Extracting the coefficient of total length
\(N+m\) from \eqref{eq:integer-first-entry-convolution} gives
\[
   f_{N+m}
   =m!C^mW_N
    +\sum_{q=1}^{N}\lambda_{m-1+q}W_{N-q},
\]
which is equivalent to
\eqref{eq:integer-return-coefficient-recursion}.

The generic Gauss quotient of Appendix~\ref{app:continuous-kernel-details}
also simplifies at integer index.  In the remainder of this appendix the
auxiliary series \(a(x,t)\), \(b(x,t)\) no longer occur, and the symbol
\(a\) resumes the meaning it has in
Appendix~\ref{app:continuous-kernel-details}.  Assume now that
\(AC\ne0\) and \(B^2-4AC\ne0\), choose
\(\kappa^2=B^2-4AC\), and put
\[
   a:=\frac{\alpha_0}{A},
   \qquad
   r:=\frac{B+\kappa}{2\kappa},
   \qquad
   c_m(s):=\frac{s-\gamma_0}{\kappa}+(a+m)r.
\]

\begin{corollary}[Integer-index Gauss and incomplete-beta reduction]
\label{cor:integer-incomplete-beta-reduction}
Write \(\mathcal R_m(s)\) for the reciprocal return series
\(\mathcal R(s)\) at boundary index \(\nu=m\).  Then
\begin{equation}
\label{eq:integer-hypergeometric-return-series}
   \mathcal R_m(s)
   =\frac{1}{\kappa c_m(s)}
    \frac{
       {}_2F_1\!\left(a,m+1;c_m(s)+1;r\right)
    }{
       {}_2F_1\!\left(a-1,m;c_m(s);r\right)
    }.
\end{equation}
If \(a\ne1\), this is equivalently
\begin{equation}
\label{eq:integer-logarithmic-derivative}
   \mathcal R_m(s)
   =\frac{1}{\kappa m(a-1)}
    \left.
       \frac{d}{dz}
       \log{}_2F_1\!\left(a-1,m;c_m(s);z\right)
    \right|_{z=r},
\end{equation}
where \(c_m(s)\) is held fixed during differentiation.  For every fixed
\(m\), the Gauss function in the denominator is obtained from one incomplete
beta function by finitely many ordinary derivatives:
\begin{equation}
\label{eq:integer-finite-derivative-reduction}
 {}_2F_1(\lambda,m;c;z)
 =\frac{1}{(m-1)!}
   \frac{d^{m-1}}{dz^{m-1}}
   \left[z^{m-1}{}_2F_1(\lambda,1;c;z)\right],
\end{equation}
with
\begin{equation}
\label{eq:integer-incomplete-beta-base}
 {}_2F_1(\lambda,1;c;z)
 =(c-1)z^{1-c}(1-z)^{c-\lambda-1}
   \mathrm B_z(c-1,\lambda-c+1).
\end{equation}
The identities are understood first for generic parameters and then by
meromorphic continuation, or coefficientwise as formal Laurent-series
identities at \(s=\infty\).
\end{corollary}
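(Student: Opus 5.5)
The plan has four steps, one for each displayed identity of the statement, and each is obtained from earlier results by specialisation or by a classical hypergeometric identity.

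For \eqref{eq:integer-hypergeometric-return-series}, specialise the generic Gauss quotient \eqref{eq:ck-hypergeometric-return-series} to \(\nu=m\), so that \(b=m+1\). Then \(a+b-1=a+m\), and \(c(s)\) becomes exactly \(c_m(s)\). The denominator parameters \((a-1,b-1)\) become \((a-1,m)\), and the numerator parameters \((a,b)\) become \((a,m+1)\). Since the proposition was established by meromorphic continuation and coefficientwise at \(s=\infty\), the specialisation is legitimate whenever \(c_m(s)\) avoids nonpositive integers. This always holds formally at \(s=\infty\), which is the sense in which the corollary is asserted.

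For \eqref{eq:integer-logarithmic-derivative}, use the derivative rule
\[
\frac{d}{dz}{}_2F_1(\lambda,\mu;c;z)=\frac{\lambda\mu}{c}\,{}_2F_1(\lambda+1,\mu+1;c+1;z).
\]
With \(\lambda=a-1\), \(\mu=m\), \(c=c_m\), this gives
\[
{}_2F_1(a,m+1;c_m+1;z)=\frac{c_m}{(a-1)m}\,\frac{d}{dz}{}_2F_1(a-1,m;c_m;z).
\]
Substituting into \eqref{eq:integer-hypergeometric-return-series} cancels the factor \(c_m\) and leaves
\[
\frac{1}{\kappa m(a-1)}\,\frac{F'(r)}{F(r)},
\]
where \(F(z)={}_2F_1(a-1,m;c_m;z)\). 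This is the claimed logarithmic derivative, and the hypothesis \(a\ne1\) is exactly what makes the division legitimate.

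For \eqref{eq:integer-finite-derivative-reduction}, compare power series termwise. We have \(z^{m-1}{}_2F_1(\lambda,1;c;z)=\sum_n \frac{(\lambda)_n}{(c)_n}z^{n+m-1}\). Applying \(d^{m-1}/dz^{m-1}\) produces the factor \((n+m-1)!/n!\), and dividing by \((m-1)!\) turns this into \((m)_n/n!\). The result is therefore \(\sum_n\frac{(\lambda)_n(m)_n}{(c)_n n!}z^n\), which is \({}_2F_1(\lambda,m;c;z)\). This is a purely formal identity, valid coefficientwise for every \(m\).

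For \eqref{eq:integer-incomplete-beta-base}, expand the incomplete beta function as
\[
\mathrm B_z(p,q)=\int_0^z u^{p-1}(1-u)^{q-1}du=\frac{z^p}{p}\,{}_2F_1(p,1-q;p+1;z),
\]
taking \(p=c-1\) and \(q=\lambda-c+1\). This gives
\[
(c-1)z^{1-c}\mathrm B_z=\,{}_2F_1(c-1,c-\lambda;c;z).
\]
Euler's transformation \({}_2F_1(\alpha,\beta;\gamma;z)=(1-z)^{\gamma-\alpha-\beta}{}_2F_1(\gamma-\alpha,\gamma-\beta;\gamma;z)\) applied to this yields \((1-z)^{\lambda-c+1}\,{}_2F_1(1,\lambda;c;z)\). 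Multiplying by \((1-z)^{c-\lambda-1}\) then recovers \({}_2F_1(\lambda,1;c;z)\). This identity holds for \(\Re c>1\) and \(|z|<1\), and extends by meromorphic continuation in \(c\). The only real subtlety in the whole argument is this parameter bookkeeping: the requirement \(c-1\notin\mathbb Z_{\le0}\) for the beta representation, and the convention that all identities are read generically and then continued, or read coefficientwise at \(s=\infty\). I expect this continuation bookkeeping, not any of the algebra, to be the main thing to state carefully.
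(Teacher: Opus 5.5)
Your proposal is correct and follows the paper's proof step for step. It specialises the generic Gauss quotient at \(b=m+1\), obtains the logarithmic-derivative form from \(\frac{d}{dz}{}_2F_1(a-1,m;c;z)=\frac{m(a-1)}{c}{}_2F_1(a,m+1;c+1;z)\), proves the \((m-1)\)-fold derivative reduction by comparing Taylor coefficients, and gets the incomplete-beta base case from the hypergeometric representation of \(\mathrm B_z\) combined with Euler's transformation, while adding parameter bookkeeping the paper leaves implicit (e.g.\ \(\Re c>1\) for the integral representation, followed by continuation).
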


\begin{proof}
Formula \eqref{eq:integer-hypergeometric-return-series} is
\eqref{eq:ck-hypergeometric-return-series} with
\(\beta_0/C=m+1\).  The derivative identity
\[
   \frac{d}{dz}{}_2F_1(a-1,m;c;z)
   =\frac{m(a-1)}{c}{}_2F_1(a,m+1;c+1;z)
\]
gives \eqref{eq:integer-logarithmic-derivative}.  Comparing Taylor
coefficients proves \eqref{eq:integer-finite-derivative-reduction}, and
Euler's transformation together with the standard hypergeometric
representation of the incomplete beta function gives
\eqref{eq:integer-incomplete-beta-base}.
\end{proof}


\section{Explicit formulas for the factorised height columns}
\label{app:riccati-collapse}

This appendix supplies the elementary calculations behind
Corollary~\ref{cor:collapse-explicit-form}.  The factorisation itself is
proved directly from the path recurrence in
Theorem~\ref{thm:terminal-height-factorisation}; here we solve the two
one-variable equations and record the elementary degeneration \(C=0\).

Assume first that \(C\ne0\).  To solve
\[
   Y'=A+BY+CY^2,
   \qquad
   Y(0)=0,
\]
write
\begin{equation}
\label{eq:app-collapse-Y-linearisation}
   Y(t)=-\frac1C\frac{u'(t)}{u(t)}.
\end{equation}
Then \(u\) satisfies
\begin{equation}
\label{eq:app-u-ode}
   u''=Bu'-ACu,
   \qquad
   u(0)=1,
   \qquad
   u'(0)=0.
\end{equation}
Conversely, any solution of \eqref{eq:app-u-ode} which is nonzero near the
origin gives a solution of the Riccati equation through
\eqref{eq:app-collapse-Y-linearisation}.

Let
\[
   \kappa^2=B^2-4AC
\]
and let \(D(t)\) be defined by \eqref{eq:collapse-D-def}.  The solution of
\eqref{eq:app-u-ode} is
\begin{equation}
\label{eq:app-collapse-u-D}
   u(t)=e^{Bt/2}D(t).
\end{equation}
Indeed, this expression satisfies the differential equation and the two
initial conditions.  Moreover,
\[
   D'(t)+\frac B2D(t)
   =
   -\frac{2AC}{\kappa}\sinh\frac{\kappa t}{2}.
\]
Substitution in \eqref{eq:app-collapse-Y-linearisation} gives
\[
   Y(t)
   =
   \frac{2A}{\kappa}
   \frac{\sinh(\kappa t/2)}{D(t)},
\]
which is \eqref{eq:collapse-explicit-Y}.  The expression is unchanged when
\(\kappa\) is replaced by \(-\kappa\).

Put \(p=\beta_0/C\).  Since the return series \(W\) satisfies
\eqref{eq:collapse-W-equation},
\eqref{eq:app-collapse-Y-linearisation} gives
\[
   W(t)=e^{\gamma_0t}u(t)^{-p}.
\]
Using \eqref{eq:app-collapse-u-D} yields
\[
   W(t)=e^{(\gamma_0-pB/2)t}D(t)^{-p},
\]
which is \eqref{eq:collapse-explicit-W}.

When \(\kappa=0\), the Taylor limits are
\[
   \frac{\sinh(\kappa t/2)}{\kappa}\longrightarrow\frac t2,
   \qquad
   D(t)\longrightarrow1-\frac B2t,
\]
and \eqref{eq:collapse-double-root} follows.  If \(B^2<4AC\), it is often
more transparent to put \(\tau=\sqrt{4AC-B^2}\).  Then
\[
   D(t)
   =
   \cos\frac{\tau t}{2}
   -\frac B\tau\sin\frac{\tau t}{2},
   \qquad
   Y(t)
   =
   \frac{2A}{\tau}
   \frac{\sin(\tau t/2)}{D(t)}.
\]
The secant and level-step examples in Section~\ref{sec:collapse} are the
specialisations \((A,B,C)=(1,0,1)\) and \((1,1,1)\), respectively.

If \(C=0\), the quadratic term disappears and no linearisation is needed.
For \(B\ne0\),
\[
   Y(t)=\frac AB(e^{Bt}-1),
   \qquad
   W(t)
   =
   \exp\left(
      \gamma_0t
      +\frac{A\beta_0}{B^2}(e^{Bt}-1-Bt)
   \right).
\]
For \(B=0\),
\[
   Y(t)=At,
   \qquad
   W(t)=e^{\gamma_0t+A\beta_0t^2/2}.
\]
In both cases, \(H_k=WY^k\) and
\(w(x,t)=W(t)/(1-xY(t))\).

The boundary method of Section~\ref{sec:continuous-kernel} uses
\[
   X'=AX^2+BX+C,
   \qquad
   X(0)=0.
\]
When \(A\ne0\), the same function \(u\) from
\eqref{eq:app-u-ode} gives the standard linearisation
\[
   X(t)=-\frac1A\frac{u'(t)}{u(t)}.
\]
Comparing this with \eqref{eq:app-collapse-Y-linearisation} gives, for
\(C\ne0\),
\[
   Y(t)=\frac ACX(t).
\]
The same identity follows directly by differentiating \((A/C)X\), so it
also covers \(A=0\).  Hence the return series in the factorised family may
be written as
\[
   W(t)
   =
   \exp\left(
      \gamma_0t
      +\frac{A\beta_0}{C}\int_0^tX(r)\,dr
   \right).
\]
Thus, when \(C\ne0\), the height factor \(Y\) is a rescaled version of the
boundary flow used in Section~\ref{sec:continuous-kernel}.  In the path
formulation of Section~\ref{sec:collapse}, this same relation appears as the
factorisation \(H_k=WY^k\) of all terminal-height columns.

The factorised family also admits a compact kernel for arbitrary initial and
terminal heights.  Let \(w_n^{i\to k}\) denote the total weight of paths of
length \(n\) from height \(i\) to height \(k\), for
\[
   \alpha_h=A(h+1),
   \qquad
   \beta_h=Ch+\beta_0,
   \qquad
   \gamma_h=Bh+\gamma_0.
\]

\begin{proposition}[Full transition kernel in the factorised family]
Let \(\Phi_t(u)\) be the Riccati flow
\[
   \partial_t\Phi_t(u)
   =A+B\Phi_t(u)+C\Phi_t(u)^2,
   \qquad
   \Phi_0(u)=u,
\]
and put
\[
   M(u,t)
   :=\exp\left(
       \gamma_0t+\beta_0\int_0^t\Phi_s(u)\,ds
     \right).
\]
Then the three-variable exponential generating function is
\begin{equation}
\label{eq:full-transition-kernel}
   \sum_{i,k,n\ge0}
      w_n^{i\to k}u^iv^k\frac{t^n}{n!}
   =\frac{M(u,t)}{1-v\Phi_t(u)}.
\end{equation}
At \(u=0\), one has \(\Phi_t(0)=Y(t)\) and \(M(0,t)=W(t)\), so
\eqref{eq:full-transition-kernel} reduces to
\eqref{eq:collapse-w}.
\end{proposition}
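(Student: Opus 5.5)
We will prove \eqref{eq:full-transition-kernel} by the same column-recurrence argument used for Theorem~\ref{thm:terminal-height-factorisation}, now refined by the starting height. Write
\[
   \mathcal K(u,v,t)=\sum_{i,k,n}w_n^{i\to k}u^iv^k\frac{t^n}{n!},
   \qquad
   K_k(u,t)=[v^k]\mathcal K .
\]
Looking at the last step, exactly as in \eqref{eq:motzkin-rec}, gives the column system
\[
   \partial_tK_k=\alpha_{k-1}K_{k-1}+\gamma_kK_k+\beta_kK_{k+1},
\]
with the first term absent for \(k=0\), and with initial data \(K_k(u,0)=u^k\), since \(w_0^{i\to k}=\delta_{ik}\). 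This system is triangular in the length, so coefficientwise in \(t\) it has a unique solution among formal series in \((u,t)\). It therefore suffices to exhibit one solution of the right shape.

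The candidate is \(\widehat K_k=M\Phi^k\), with \(\Phi=\Phi_t(u)\) and \(M=M(u,t)\). Its initial values are \(\widehat K_k(u,0)=u^k\), because \(\Phi_0(u)=u\) and \(M(u,0)=1\). For the differential equation, use \(\partial_tM=(\gamma_0+\beta_0\Phi)M\) together with the Riccati equation for \(\Phi\):
\[
   \partial_t(M\Phi^k)=(\gamma_0+\beta_0\Phi)M\Phi^k+kM\Phi^{k-1}(A+B\Phi+C\Phi^2).
\]
Grouping powers of \(\Phi\), the right-hand side becomes
\[
   kA\,M\Phi^{k-1}+(\gamma_0+kB)M\Phi^k+(\beta_0+kC)M\Phi^{k+1}.
\]
This is \(\alpha_{k-1}\widehat K_{k-1}+\gamma_k\widehat K_k+\beta_k\widehat K_{k+1}\), because \(\alpha_{k-1}=Ak\), \(\gamma_k=Bk+\gamma_0\) and \(\beta_k=Ck+\beta_0\). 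The computation is the same as in the proof of Theorem~\ref{thm:terminal-height-factorisation}; the only change is that \(Y\) is replaced by \(\Phi\), whose initial value is \(u\) rather than \(0\). At \(k=0\) the \(\alpha\)-term carries the factor \(k=0\), so the boundary equation holds automatically.

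Uniqueness then gives \(K_k=M\Phi^k\) for every \(k\). Summing the geometric series in \(v\) yields \(M/(1-v\Phi)\), which is \eqref{eq:full-transition-kernel}.

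The main technical point, which we expect to need some care, is to make all objects honest formal series in \((u,t)\).
\begin{itemize}
\item \(\Phi_t(u)\) is defined as the formal flow \(\Phi_t(u)=u+t(A+Bu+Cu^2)+\cdots\), whose coefficients are determined recursively in powers of \(t\), each a polynomial in \(u\).
\item \(M\) is then a well-defined formal series with \(M(u,0)=1\).
\item The sum over \(k\) of \(v^k M\Phi^k\) converges formally in \((v,t)\)-adic terms because of the \(v^k\) factor; for the recurrence itself we work coefficientwise in \(v\), so no convergence issue arises there.
\end{itemize}

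Finally, at \(u=0\) the flow \(\Phi_t(0)\) solves \eqref{eq:collapse-Y-equation}, so \(\Phi_t(0)=Y(t)\). Likewise \(M(0,t)\) solves \eqref{eq:collapse-W-equation} with \(W(0)=1\), so \(M(0,t)=W(t)\). The formula then reduces to \eqref{eq:collapse-w}.
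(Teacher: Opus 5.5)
Your proof is correct, but it runs on the opposite evolution equation from the paper's. The paper decomposes by the \emph{first} step. This gives one linear first-order equation for the whole trivariate series in the starting-height variable,
\[
\mathcal K_t=(A+Bu+Cu^2)\mathcal K_u+(\gamma_0+\beta_0u)\mathcal K,\qquad \mathcal K(u,v;0)=\frac{1}{1-uv},
\]
in which \(v\) enters only through the initial datum. The paper then integrates this equation along characteristics. The characteristic flow of \(A+Bu+Cu^2\) is exactly \(\Phi_t(u)\), and integrating the zeroth-order coefficient \(\gamma_0+\beta_0u\) along it produces \(M\). So the formula is derived there rather than guessed.

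You instead decompose by the \emph{last} step. In your version \(u\) is an inert parameter carried by the initial data \(K_k(u,0)=u^k\), and you verify the ansatz \(M\Phi^k\) against the terminal-height column system. Your route is purely formal and is essentially a rerun of the proof of Theorem~\ref{thm:terminal-height-factorisation} with \(Y\) replaced by \(\Phi\). It needs only \(\partial_tM=(\gamma_0+\beta_0\Phi)M\) and the Riccati equation in \(t\), with no substitution \(u\mapsto\Phi_s(u)\) and no characteristic argument in \(u\). Uniqueness is immediate from triangularity in the length.

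The paper's route explains where \(\Phi\) and \(M\) come from. It also shows why the factorised family is special from the starting-height side: the up-step contribution in the first-step decomposition becomes the exact derivative \(A\mathcal K_u\) only because \(\alpha_i=A(i+1)\). For \(\alpha_0\neq A\) a divided difference in \(u\) would appear, mirroring the boundary obstruction of Section~\ref{sec:affine-motzkin}.
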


\begin{proof}
Let \(\mathcal K(u,v;t)\) denote the left-hand side.  A first step from the
initial height gives the local equation
\[
   \mathcal K_t
   =(A+Bu+Cu^2)\mathcal K_u
    +(\gamma_0+\beta_0u)\mathcal K,
   \qquad
   \mathcal K(u,v;0)=\frac1{1-uv}.
\]
Solving this equation along the Riccati flow gives
\eqref{eq:full-transition-kernel}.
\end{proof}


\section{Fixed-height Jacobi recurrences and finite shifted tails}
\label{app:continued-fraction-details}

This appendix records three optional consequences of the Jacobi viewpoint in
Section~\ref{sec:external-readings}: a finite polynomial reconstruction at
each fixed terminal height, a telescoping hypergeometric formula for those
heights, and the finite continued-fraction correction created by an integer
shifted floor.

Put the formal reciprocal Laurent series
\[
   \mathcal R(s):=\sum_{n\ge0}\frac{W_n}{s^{n+1}},
   \qquad
   \mathcal H_k(s):=\sum_{n\ge0}\frac{w_{n,k}}{s^{n+1}}.
\]
Multiplying the path recurrence \eqref{eq:motzkin-rec} by \(s^{-n-2}\)
and summing over \(n\ge0\) gives
\begin{equation}
\label{eq:app-height-transform-recurrence}
   \beta_k\mathcal H_{k+1}(s)
   =(s-\gamma_k)\mathcal H_k(s)
    -\alpha_{k-1}\mathcal H_{k-1}(s)
    -\mathbf 1_{\{k=0\}},
\end{equation}
where the term involving \(\mathcal H_{-1}\) is absent.

Define polynomial sequences \(\Pi_k\) and \(\Theta_k\) by
\[
   \Pi_0=1,
   \qquad
   \Pi_1=s-\gamma_0,
   \qquad
   \Theta_0=0,
   \qquad
   \Theta_1=1,
\]
and, for \(k\ge1\), by
\begin{equation}
\label{eq:terminal-height-polynomial-recurrence}
   F_{k+1}(s)
   =(s-\gamma_k)F_k(s)-\Lambda_kF_{k-1}(s),
   \qquad
   \Lambda_k=\alpha_{k-1}\beta_{k-1},
\end{equation}
for both \(F=\Pi\) and \(F=\Theta\).

\begin{proposition}[Fixed-height reciprocal series]
If \(\beta_0\beta_1\cdots\beta_{k-1}\ne0\), then
\begin{equation}
\label{eq:terminal-height-series-formula}
   \mathcal H_k(s)
   =
   \frac{
      \Pi_k(s)\mathcal R(s)-\Theta_k(s)
   }{
      \beta_0\beta_1\cdots\beta_{k-1}
   }.
\end{equation}
\end{proposition}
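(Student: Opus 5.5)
We argue by induction on \(k\), using the three-term recurrence \eqref{eq:app-height-transform-recurrence} as a forward recursion that expresses \(\mathcal H_{k+1}\) through \(\mathcal H_k\) and \(\mathcal H_{k-1}\). It is convenient to prove the equivalent statement
\[
   \beta_0\cdots\beta_{k-1}\,\mathcal H_k=\Pi_k\mathcal R-\Theta_k
   \qquad(k\ge0),
\]
which needs no division and holds for every \(k\) as a formal Laurent-series identity. The hypothesis \(\beta_0\cdots\beta_{k-1}\neq0\) is used only at the very end, to divide.

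First we check the two base cases. For \(k=0\) the empty product is \(1\), \(\Pi_0=1\), \(\Theta_0=0\) and \(\mathcal H_0=\mathcal R\), so the identity holds. For \(k=1\), take \eqref{eq:app-height-transform-recurrence} at \(k=0\), where the \(\mathcal H_{-1}\) term is absent. It reads \(\beta_0\mathcal H_1=(s-\gamma_0)\mathcal R-1=\Pi_1\mathcal R-\Theta_1\).

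Next comes the inductive step. Assume the identity for \(k-1\) and \(k\), with \(k\ge1\). Multiply \eqref{eq:app-height-transform-recurrence} at height \(k\), where the indicator vanishes, by \(\beta_0\cdots\beta_{k-1}\). This gives
\[
   \beta_0\cdots\beta_k\,\mathcal H_{k+1}
   =(s-\gamma_k)\,\beta_0\cdots\beta_{k-1}\mathcal H_k
    -\alpha_{k-1}\beta_{k-1}\,\beta_0\cdots\beta_{k-2}\mathcal H_{k-1}.
\]
The key observation is that \(\alpha_{k-1}\beta_{k-1}\) is exactly \(\Lambda_k\). This is why the normalising product is built from \(\beta\)'s: one \(\beta\) factor turns the up-step weight \(\alpha_{k-1}\) into the Jacobi product. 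Substituting the two inductive hypotheses, the right-hand side becomes
\[
   \bigl[(s-\gamma_k)\Pi_k-\Lambda_k\Pi_{k-1}\bigr]\mathcal R
   -\bigl[(s-\gamma_k)\Theta_k-\Lambda_k\Theta_{k-1}\bigr].
\]
By \eqref{eq:terminal-height-polynomial-recurrence} this equals \(\Pi_{k+1}\mathcal R-\Theta_{k+1}\), which completes the induction. Finally, under the stated nonvanishing hypothesis we divide by \(\beta_0\cdots\beta_{k-1}\) to obtain \eqref{eq:terminal-height-series-formula}.

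The only point needing care is the derivation of \eqref{eq:app-height-transform-recurrence} itself, which the paper states just before the proposition; we take it as given. The one bookkeeping check is the source term \(-\mathbf 1_{\{k=0\}}\). It comes from \(w_{0,0}=1\), because the \(n=-1\) shift leaves the \(s^{-1}\) coefficient of \(s\mathcal H_k\) uncompensated only at \(k=0\). This is what makes \(\Theta_1=1\) while \(\Theta_0=0\), so there is no real obstacle beyond verifying these initial values.
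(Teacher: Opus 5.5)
Your proof is correct and is the same induction the paper uses: the base case comes from the height-zero recurrence, and the inductive step uses \(\alpha_{k-1}\beta_{k-1}=\Lambda_k\) together with the common three-term recurrence for \(\Pi_k\) and \(\Theta_k\). Proving the division-free identity \(\beta_0\cdots\beta_{k-1}\mathcal H_k=\Pi_k\mathcal R-\Theta_k\) is a small tidying that shows the hypothesis is needed only for the final division, though your closing remark has a trivial slip: the uncompensated term is the constant term \(w_{0,k}\) of \(s\mathcal H_k\), equivalently the \(s^{-1}\) coefficient of \(\mathcal H_k\), not the \(s^{-1}\) coefficient of \(s\mathcal H_k\).
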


\begin{proof}
At height zero, \eqref{eq:app-height-transform-recurrence} gives
\[
   \beta_0\mathcal H_1
   =(s-\gamma_0)\mathcal R-1
   =\Pi_1\mathcal R-\Theta_1.
\]
Assume that \eqref{eq:terminal-height-series-formula} holds at heights
\(k\) and \(k-1\).  Substitution into
\eqref{eq:app-height-transform-recurrence}, followed by
\(\alpha_{k-1}\beta_{k-1}=\Lambda_k\), gives
\[
   \mathcal H_{k+1}
   =
   \frac{
      \bigl((s-\gamma_k)\Pi_k-\Lambda_k\Pi_{k-1}\bigr)\mathcal R
      -\bigl((s-\gamma_k)\Theta_k-\Lambda_k\Theta_{k-1}\bigr)
   }{
      \beta_0\cdots\beta_k
   }.
\]
The recurrences \eqref{eq:terminal-height-polynomial-recurrence} complete
the induction.
\end{proof}

For generic affine parameters, the preceding polynomial formula can be
combined with the Gauss quotient for the return series to give one closed
hypergeometric expression at every terminal height.  Assume
\(AC\ne0\), \(B^2-4AC\ne0\), choose
\(\kappa^2=B^2-4AC\), and put
\[
   a=\frac{\alpha_0}{A},
   \qquad
   b=\frac{\beta_0}{C},
   \qquad
   r=\frac{B+\kappa}{2\kappa},
   \qquad
   c(s)=\frac{s-\gamma_0}{\kappa}+(a+b-1)r.
\]

\begin{corollary}[Exact reciprocal series at every terminal height]
\label{cor:all-height-hypergeometric}
For every \(k\ge0\),
\begin{equation}
\label{eq:all-height-hypergeometric}
   \mathcal H_k(s)
   =\frac{A^k(a)_k}
          {\kappa^{k+1}(c(s))_{k+1}}
     \frac{
       {}_2F_1\!\left(a+k,b+k;c(s)+k+1;r\right)
     }{
       {}_2F_1\!\left(a-1,b-1;c(s);r\right)
     }.
\end{equation}
This is a formal Laurent-series identity at \(s=\infty\), with the same
choice of branch as in \eqref{eq:ck-hypergeometric-return-series}.
\end{corollary}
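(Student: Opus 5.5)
The plan is to show that the right-hand side of \eqref{eq:all-height-hypergeometric} satisfies the same three-term recurrence \eqref{eq:app-height-transform-recurrence} as \(\mathcal H_k\), with the same two initial values. Write
\[
   \Phi_k(s):=\frac{A^k(a)_k}{\kappa^{k+1}(c)_{k+1}}\,
   {}_2F_1(a+k,b+k;c+k+1;r),
   \qquad
   D(s):={}_2F_1(a-1,b-1;c;r),
\]
so that the claim is \(\mathcal H_k=\Phi_k/D\).

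For \(k=0\) the claim is exactly \eqref{eq:ck-hypergeometric-return-series}, which is already established.

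For \(k\ge1\) I would avoid the polynomials \(\Pi_k,\Theta_k\). Since the recurrence at height \(k\ge1\) is homogeneous and \(\beta_k\ne0\), it determines \(\mathcal H_{k+1}\) uniquely from \(\mathcal H_k\) and \(\mathcal H_{k-1}\). So it suffices to check two things: the inhomogeneous height-zero relation \(\beta_0\Phi_1=(s-\gamma_0)\Phi_0-D\), and the homogeneous relation
\[
   \beta_k\Phi_{k+1}=(s-\gamma_k)\Phi_k-\alpha_{k-1}\Phi_{k-1},
   \qquad k\ge1 .
\]
Here \(\alpha_{k-1}=A(a+k-1)\), \(\beta_k=C(b+k)\), and \(\gamma_k=Bk+\gamma_0\). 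For the first step write \(s-\gamma_k=\kappa\bigl(c+k-(a+b-1)r\bigr)-Bk\). Then substitute \(B=\kappa(2r-1)\) and \(AC=\kappa^2r(1-r)\), which follow from \(r=(B+\kappa)/(2\kappa)\) and \(\kappa^2=B^2-4AC\).

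After clearing the common factors \(A^{k-1}(a)_{k-1}/(\kappa^{k}(c)_{k+2})\), the homogeneous relation should reduce to a contiguous relation among
\[
   F_-={}_2F_1(a+k-1,b+k-1;c+k;r),\quad
   F_0={}_2F_1(a+k,b+k;c+k+1;r),\quad
   F_+={}_2F_1(a+k+1,b+k+1;c+k+2;r).
\]
These all have parameters shifted simultaneously by \((1,1,1)\), and the relation is the classical Gauss three-term relation
\[
   c(c+1)\bigl[F(a,b;c;z)-F(a,b;c+1;z)\bigr]\sim abz\,F(a+1,b+1;c+2;z).
\]
More precisely, one combines the derivative formula \(F'=\frac{ab}{c}F(a+1,b+1;c+1)\) with the hypergeometric ODE \(z(1-z)F''+[c-(a+b+1)z]F'-abF=0\), evaluated for \(F=F_-\). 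That ODE exactly expresses \(F_+\) in terms of \(F_0\) and \(F_-\), with coefficients \(z(1-z)\), \(c-(a+b+1)z\), and \(ab\). I expect the factor \(r(1-r)\) to match \(AC/\kappa^2\), the coefficient \(c+k-(a+b+2k-1)r\) to match \((s-\gamma_k)/\kappa\), and the \(ab\)-term to match \(\alpha_{k-1}\beta_{k-1}\). The bookkeeping of the Pochhammer prefactors is the main obstacle, so I would first verify the match at \(k=1\) symbolically before doing general \(k\).

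The height-zero relation is the same ODE identity applied to \(D={}_2F_1(a-1,b-1;c;r)\): the ODE for \(D\) expresses \(\Phi_1\) (second derivative) through \(\Phi_0\) (first derivative) and \(D\) itself, and the term involving \(D\) supplies the \(-\mathbf 1_{\{k=0\}}\). As a consistency check, this is equivalent to \eqref{eq:terminal-height-series-formula} at \(k=1\).

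Finally, all identities are first read analytically for generic parameters, exactly as in the proof of \eqref{eq:ck-hypergeometric-return-series}. They then transfer coefficientwise to formal Laurent series at \(s=\infty\), using the same branch convention.
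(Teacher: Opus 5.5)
Your proposal is correct, and it takes a genuinely different route from the paper.

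\textbf{The paper's route.} The paper argues combinatorially. Decomposing a path that ends at height \(k\) at its last visits to the levels \(0,1,\dots,k-1\) gives
\(\mathcal H_k=\bigl(\prod_{j=0}^{k-1}\alpha_j\bigr)\prod_{j=0}^{k}\mathcal R^{[j]}\),
where \(\mathcal R^{[j]}\) is the reciprocal return series of the model with its floor raised to \(j\). Raising the floor sends \((a,b,c(s))\mapsto(a+j,b+j,c(s)+j)\), because \(2r-B/\kappa=1\). Applying \eqref{eq:ck-hypergeometric-return-series} to each raised model then produces Gauss ratios that telescope. This explains the shape of the answer, since each factor is a return series on a shifted floor. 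It needs no hypergeometric identity beyond the height-zero case, but it invokes that case for the whole family of shifted models.

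\textbf{Your route.} You use \eqref{eq:ck-hypergeometric-return-series} only for the original model. In place of the path factorisation you use the recurrence \eqref{eq:app-height-transform-recurrence} together with the hypergeometric equation. This shows directly that the numerators \(\Phi_k\) solve the Jacobi recurrence, at the cost of the Pochhammer bookkeeping.

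That bookkeeping does close up. Adopt the convention \(\alpha_{-1}\Phi_{-1}:=D\). Then for every \(k\ge0\),
\[
\beta_k\Phi_{k+1}-(s-\gamma_k)\Phi_k+\alpha_{k-1}\Phi_{k-1}
=-\frac{A^k(a)_k}{\kappa^k(c)_k}\Bigl[r(1-r)\frac{(a+k)(b+k)}{(c+k)(c+k+1)}F_+ +\frac{c+k-(a+b+2k-1)r}{c+k}F_0-F_-\Bigr].
\]
The bracket is the hypergeometric equation for \(F_-\) divided by \((a+k-1)(b+k-1)\). So your height-zero relation is simply the case \(k=0\) of the same identity.

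\textbf{Slips to fix before computing.} Two slips in your set-up would derail the computation if carried through literally.
\begin{enumerate}
\item One has \(r(1-r)=(\kappa^2-B^2)/(4\kappa^2)=-AC/\kappa^2\), so \(AC=-\kappa^2r(1-r)\), not \(+\kappa^2r(1-r)\). With your sign the \(F_+\) terms would fail to match: the \(F_+\) and \(F_-\) terms have opposite relative signs in the hypergeometric equation and in the recurrence, and this minus sign is what reconciles them.
\item One has \(s-\gamma_k=\kappa\bigl(c-(a+b-1)r\bigr)-Bk\), with no \(+k\) inside the bracket. Only then does substituting \(B=\kappa(2r-1)\) produce the coefficient \(c+k-(a+b+2k-1)r\) that you correctly anticipate.
\end{enumerate}

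\textbf{A cleaner formulation.} Use the divided form of the contiguous relation in the bracket above, rather than reading \(\Phi_0\) off from \(D'\). The divided form is a coefficientwise identity valid for all \(a,b\). By contrast, \(D'=\frac{(a-1)(b-1)}{c}\,{}_2F_1(a,b;c+1;r)\) vanishes identically when \(a=1\), i.e.\ \(\alpha_0=A\), which is the factorised case. The corollary does not exclude that case.
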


\begin{proof}
Let \(\mathcal R^{[j]}(s)\) be the reciprocal return series for paths based
at height \(j\) and constrained never to go below \(j\).  Shifting the
height origin to \(j\) replaces
\[
   a\mapsto a+j,
   \qquad b\mapsto b+j,
   \qquad c(s)\mapsto c(s)+j;
\]
the last identity follows from \(2r-B/\kappa=1\).  Hence
\[
   \mathcal R^{[j]}(s)
   =\frac{1}{\kappa(c(s)+j)}
     \frac{
       {}_2F_1(a+j,b+j;c(s)+j+1;r)
     }{
       {}_2F_1(a+j-1,b+j-1;c(s)+j;r)
     }.
\]
Decomposing a path ending at height \(k\) at its successive first passages
through the levels \(1,\ldots,k\) gives
\[
   \mathcal H_k(s)
   =\left(\prod_{j=0}^{k-1}\alpha_j\right)
      \prod_{j=0}^{k}\mathcal R^{[j]}(s).
\]
The hypergeometric factors telescope, while
\(\prod_{j=0}^{k-1}\alpha_j=A^k(a)_k\) and
\(\prod_{j=0}^{k}(c(s)+j)=(c(s))_{k+1}\), yielding
\eqref{eq:all-height-hypergeometric}.
\end{proof}

We next record the continued-fraction form of the finite virtual strip.  Let
\(\nu=m\in\mathbb Z_{>0}\).  The shifted auxiliary model of
Section~\ref{sec:integer-shift} has Jacobi data
\[
   \widetilde{\mathfrak b}_j
   =Bj+\gamma_0-mB,
   \qquad
   \widetilde\Lambda_{j+1}
   =C(j+1)(Aj+\alpha_0-mA).
\]
Let \(\mathcal T_j(s)\) be the formal reciprocal return series for paths
which start and end at shifted height \(j\) and never go below \(j\).
First-return decomposition gives
\[
   \mathcal T_j
   =\frac{1}{s-\widetilde{\mathfrak b}_j
      -\widetilde\Lambda_{j+1}\mathcal T_{j+1}}.
\]
At heights \(m,m+1,\ldots\), the shifted weights agree with the original
weights after translating the height down by \(m\), and hence
\[
   \mathcal R(s)=\mathcal T_m(s).
\]

For \(0\le j<m\), introduce
\[
   M_j(s)
   =
   \begin{pmatrix}
      0&1\\
      -\widetilde\Lambda_{j+1}&s-\widetilde{\mathfrak b}_j
   \end{pmatrix}.
\]
The fractional-linear map represented by \(M_j\) sends
\(\mathcal T_{j+1}\) to \(\mathcal T_j\).  Write
\[
   M_0(s)M_1(s)\cdots M_{m-1}(s)
   =
   \begin{pmatrix}
      p_m(s)&q_m(s)\\
      r_m(s)&t_m(s)
   \end{pmatrix}.
\]
Then
\[
   \mathcal T_0
   =\frac{p_m\mathcal T_m+q_m}
          {r_m\mathcal T_m+t_m}.
\]
If the couplings
\(\widetilde\Lambda_1,\ldots,\widetilde\Lambda_m\) are nonzero, solving
for \(\mathcal R=\mathcal T_m\) gives
\[
   \mathcal R(s)
   =\frac{q_m(s)-t_m(s)\mathcal T_0(s)}
          {r_m(s)\mathcal T_0(s)-p_m(s)}.
\]
If one of the couplings vanishes, the recurrence splits at that level.
In particular,
\[
   \widetilde\Lambda_m
   =mC(\alpha_0-A),
\]
so the closed-bridge case is exactly the split at the top of the virtual
strip.


\section{Kummer transform for the boundary-perturbed queue}
\label{app:kummer-bd}

This appendix derives the transform \eqref{eq:kummer-transform}.  The
interior coefficients are affine, while the reflecting floor creates a
single defect in the level weight, as explained in
Section~\ref{sec:external-readings}.

For the probability generating function equation \eqref{eq:bd-pgf}, write
\[
   \mathsf q(x)=\theta(1-x),
   \qquad
   \mathsf r(x)=\lambda(x-1),
   \qquad
   \mathsf s(x)=\mu(1-x).
\]
Then
\[
   \mathsf q(0)=\theta>0,
   \qquad
   \mathsf s(0)=\mu>0,
   \qquad
   \nu=\frac{\mu}{\theta},
\]
and the boundary flow is
\[
   X(t)=1-e^{-\theta t}.
\]

Assume \(\mathcal G(x,0)=1\).  The homogeneous factor along a
boundary-hitting characteristic is
\[
   F(t)=X(t)^\nu
        \exp\!\left(\frac{\lambda}{\theta}
        (e^{-\theta t}-1)\right),
\]
because
\[
   \frac{F'(t)}{F(t)}
   =\lambda(X(t)-1)+\frac{\mu(1-X(t))}{X(t)}
   =\mathsf r(X(t))+\frac{\mathsf s(X(t))}{X(t)}.
\]
The same boundary-cancellation calculation as in
Section~\ref{sec:continuous-kernel} gives
\begin{equation}
\label{eq:bd-boundary-volterra}
   F(T)=\int_0^T L(v)p_0(T-v)\,dv,
\end{equation}
where \(p_0(t)=\mathcal G(0,t)\) and
\[
   L(v)=\frac{\mathsf s(X(v))}{X(v)}F(v).
\]
Since \(X(v)=1-e^{-\theta v}\),
\[
   L(v)
   =\mu e^{-\theta v}(1-e^{-\theta v})^{\nu-1}
      \exp\!\left(\frac{\lambda}{\theta}
      (e^{-\theta v}-1)\right).
\]

Using the Laplace transform convention
\[
   \widehat f(s)=\int_0^\infty e^{-st}f(t)\,dt,
\]
we obtain from \eqref{eq:bd-boundary-volterra}
\[
   \widehat p_0(s)=\frac{\widehat F(s)}{\widehat L(s)},
   \qquad \Re s>0.
\]
Put
\[
   a=\frac{\lambda}{\theta},
   \qquad
   r=\frac{s}{\theta},
\]
and set \(y=e^{-\theta t}\).  Then
\[
   \widehat F(s)
   =\frac{e^{-a}}{\theta}
     B(r,\nu+1)
     {}_1F_1(r;r+\nu+1;a),
\]
while
\[
   \widehat L(s)
   =\nu e^{-a}
     B(r+1,\nu)
     {}_1F_1(r+1;r+\nu+1;a).
\]
The quotient of the beta factors is \(1/s\).  Kummer's transformation
\[
   {}_1F_1(\alpha;\gamma;z)
   =e^z{}_1F_1(\gamma-\alpha;\gamma;-z)
\]
therefore yields
\[
   \widehat p_0(s)
   =
   \frac{1}{s}
   \frac{
      {}_1F_1\!\bigl(\nu+1;\nu+1+s/\theta;-a\bigr)
   }{
      {}_1F_1\!\bigl(\nu;\nu+1+s/\theta;-a\bigr)
   },
\]
which is \eqref{eq:kummer-transform}.


\end{document}